\numberwithin{equation}{section}
\newtheorem{Theorem}{Theorem}[section]
\newtheorem*{Theorem*}{Theorem}
\newtheorem{Lemma}[Theorem]{Lemma}
\theoremstyle{definition}
\newtheorem{Definition}[Theorem]{Definition}
\newtheorem{Remark}[Theorem]{Remark} }
\newtheorem*{theoremA1}{Theorem A1}
\newtheorem*{theoremA2}{Theorem A2}
\newtheorem*{theoremA3}{Theorem A3}
\newtheorem*{theoremA4}{Theorem A4}
\newtheorem*{theoremG1}{Theorem G1}
\newtheorem*{theoremG2}{Theorem G2}
\newtheorem*{theoremG3}{Theorem G3}
\newtheorem*{theoremG4}{Theorem G4}
\begin{document}

\allowdisplaybreaks

\newcommand{\arXivNumber}{2406.10947}

\renewcommand{\PaperNumber}{107}

\FirstPageHeading

\ShortArticleName{The Algebraic and Geometric Classification of Compatible Pre-Lie Algebras}

\ArticleName{The Algebraic and Geometric Classification\\ of Compatible Pre-Lie Algebras}

\Author{Hani ABDELWAHAB~$^{\rm a}$, Ivan KAYGORODOV~$^{\rm b}$ and Abdenacer MAKHLOUF~$^{\rm c}$}

\AuthorNameForHeading{H.~Abdelwahab, I.~Kaygorodov and A.~Makhlouf}

\Address{$^{\rm a)}$~Department of Mathematics, Faculty of Science, Mansoura University, Mansoura, Egypt}
\EmailD{\href{mailto:haniamar1985@gmail.com}{haniamar1985@gmail.com}}

\Address{$^{\rm b)}$~CMA-UBI, University of Beira Interior, Covilh\~{a}, Portugal}
\EmailD{\href{mailto:kaygorodov.ivan@gmail.com}{kaygorodov.ivan@gmail.com}}

\Address{$^{\rm c)}$~IRIMAS - D\'epartement de Math\'ematiques, University of Haute Alsace, Mulhouse, France}
\EmailD{\href{mailto:abdenacer.makhlouf@uha.fr}{abdenacer.makhlouf@uha.fr}}

\ArticleDates{Received August 22, 2024, in final form November 18, 2024; Published online November 28, 2024}

\Abstract{In this paper, we develop a method to obtain the algebraic classification of compatible pre-Lie algebras from the classification of pre-Lie algebras of the same dimension. We use this method to obtain the algebraic classification of complex 2-dimensional compatible pre-Lie algebras. As a byproduct, we obtain the classification of complex 2-dimensional compatible commutative associative, compatible associative and compatible Novikov algebras. In addition, we consider the geometric classification of varieties of cited algebras, that is the description of its irreducible components.}

\Keywords{compatible algebra; compatible associative algebra; compatible pre-Lie algebra; algebraic classification; geometric classification}

\Classification{17A30; 17D25; 14L30}

\section{Introduction}

The algebraic classification (up to isomorphism) of algebras of
small dimensions from a certain variety defined by a family of polynomial identities is a % end
classic problem in the theory of non-associative algebras.
Another interesting approach to studying algebras of a fixed dimension is to study them from a geometric point of view (that is, to study the degenerations and deformations of these algebras). The results in which the complete information about degenerations of a~certain variety is obtained are generally referred to as the geometric classification of the algebras of these varieties. There are many results related to the algebraic and geometric classification of
Jordan, Lie, Leibniz, Zinbiel, and other algebras
(see \cite{abcf, fkkv, afm, BB09,BB14, EM22, BT22, GRH, KKL, KV, LLL} and references in \cite{k23,MS,l24}). The geometric classification of algebras from a certain variety is based on the notion of degeneration, that is a ``dual'' notion to deformation \cite{b,D23,EM22,HSZ,lsb20,c}.

Pre-Lie algebras, also known as right symmetric algebras,
appeared in some papers by Gerstenhaber, Koszul, and Vinberg in the 1960s.
It is a generalization of associative algebras and the most popular non-associative subvariety of Lie-admissible algebras.
They have various applications in geometry and physics \cite{B06};
recently some connections between them and trees, braces and $F$-manifold algebras were established in \cite{CL,lsb20, SS24,S22}.
Novikov algebras are pre-Lie algebras with an additional identity.
They were introduced in papers by
Gel'fand and Dorfman, Balinskii and Novikov (about Novikov algebras, see \cite{DIU} and references therein). Let $\Omega$ be a~variety of algebras.
 We say that an algebra $(\bf A, \cdot, \ast)$ is a compatible $\Omega$-algebra,
 if and only if~$(\bf A, \cdot)$, $(\bf A, \ast)$
and $(\bf A, \cdot + \ast)$ are $\Omega$-algebras.
Compatible Lie algebras are considered in the study of the classical Yang--Baxter equation \cite{GS06},
integrable equations of the principal chiral model type \cite{GS02},
elliptic theta functions \cite{OS04}, and other areas.
The study of non-Lie compatible algebras is also very popular.
So, cohomology and deformations were studied for
compatible Lie algebras \cite{lsb23},
compatible associative algebras \cite{CDM},
compatible ${\rm Hom}$-Lie algebras \cite{D23},
compatible~$L_{\infty}$-algebras \cite{D22},
compatible $3$-Lie algebras \cite{HSZ},
compatible dendriform algebras \cite{DSQ}, and so on.
Free compatible algebras were studied in the associative algebra case in \cite{D09} and
in the Lie algebra case in \cite{GR,l10}.
Compatible associative structures on matrix algebras were studied in~\cite{OS05,OS06,OS08}.
General compatible structures have been studied from an operadic point of view in~\cite{S08,zgg}.
A generalization of compatible algebras was considered in \cite{Khr}.
At this moment there is only one paper about the classification of small-dimensional compatible algebras.
Namely, all~$4$-dimensional nilpotent compatible Lie algebras were classified in \cite{LLL}.

The main goal of the present paper is to obtain the algebraic and geometric description of the variety of complex $2$-dimensional compatible pre-Lie algebras. To do so, we first determine all such $2$-dimensional algebra structures, up to isomorphism (what we call the algebraic classification), and then proceed to determine the geometric properties of the corresponding variety, namely its dimension and description of the irreducible components (the geometric classification).
As some corollaries, we have the algebraic and geometric classification of complex $2$-dimensional
compatible commutative associative,
compatible associative
and compatible Novikov algebras.

Our main results regarding the algebraic classification are summarized below.

\begin{theoremA1}
There are infinitely many isomorphism classes of complex
$2$-dimensional compatible pre-Lie algebras, described explicitly in
 Theorem {\rm\ref{compre2}}
 in terms of
$6$ three-parameter families,
$14$ two-parameter families,
$13$ one-parameter families,
and
$8$ additional isomorphism classes.
\end{theoremA1}

\begin{theoremA2}
There are infinitely many isomorphism classes of complex
$2$-dimensional compatible commutative associative algebras, described explicitly in
 Theorem {\rm\ref{comasscom2}}
 in terms of
$1$ three-parameter family,
$3$ two-parameter families,
$8$ one-parameter families,
and
$6$ additional isomorphism classes.
\end{theoremA2}

\begin{theoremA3}
There are infinitely many isomorphism classes of complex
$2$-dimensional compatible associative algebras, described explicitly in
 Theorem {\rm\ref{comass2}}
 in terms of
$1$ three-parameter family,
$3$ two-parameter families,
$10$ one-parameter families,
and
$10$ additional isomorphism classes.
\end{theoremA3}

\begin{theoremA4}
There are infinitely many isomorphism classes of complex
$2$-dimensional compatible Novikov algebras, described explicitly in
 Theorem {\rm\ref{comnov2}}
 in terms of
$3$ three-parameter families,
$7$ two-parameter families,
$11$ one-parameter families,
and
$7$ additional isomorphism classes.
\end{theoremA4}

The geometric part of our work aims to generalize previously obtained results about the geometric classification of $2$-dimensional Novikov \cite{BB14} and pre-Lie \cite{BB09} algebras. Our main results regarding the geometric classification are summarized below.

\begin{theoremG1}
The variety of complex $2$-dimensional compatible pre-Lie algebras has dimension~$7$.
It is defined by $2$ rigid algebras,
$1$ one-parametric family of algebras,
$7$ two-parametric families of algebras,
and $4$ three-parametric families of algebras and can be described as the closure of the union of $\mathrm{GL}_2(\mathbb{C})$-orbits of the algebras given in Theorem {\rm\ref{geo4}}.
\end{theoremG1}

\begin{theoremG2}
The variety of complex $2$-dimensional compatible commutative associative algebras has dimension $7$.
It is defined by
$1$ two-parametric family of algebras
and $1$ three-parametric family of algebras and can be described as the closure of the union of $\mathrm{GL}_2(\mathbb{C})$-orbits of the algebras given in Theorem {\rm\ref{geo2}}.
\end{theoremG2}

\begin{theoremG3}
The variety of complex $2$-dimensional compatible associative algebras has dimension $7$.
It is defined by $2$ rigid algebras,
$2$ two-parametric families of algebras,
and $1$ three-parametric family of algebras and can be described as the closure of the union of $\mathrm{GL}_2(\mathbb{C})$-orbits of the algebras given in Theorem {\rm\ref{geo3}}.
\end{theoremG3}

 \begin{theoremG4}
The variety of complex $2$-dimensional compatible Novikov algebras has dimension~$7$.
It is defined by $1$ rigid algebra,
$3$ two-parametric families of algebras,
and $1$ three-parametric family of algebras and can be described as the closure of the union of $\mathrm{GL}_2(\mathbb{C})$-orbits of the algebras given in Theorem {\rm\ref{geo1}}.
\end{theoremG4}

\section{The algebraic classification of compatible algebras}

All the algebras below will be over $\mathbb C$ and all the linear maps will be $\mathbb C$-linear.
For simplicity, every time we write the multiplication table of an algebra
the products of basic elements whose values are zero are omitted.

\subsection{The algebraic classification of algebras}
In this paper, we work with compatible pre-Lie algebras with two multiplications. Let us review the method we will use to obtain the algebraic classification for the variety of compatible pre-Lie algebras
(the present method, in the case of Poisson algebras, is given with more details in~\cite{fkkv,afm}).

\begin{Definition}
An algebra is called a pre-Lie algebra if it satisfies the identity
\[
( x\cdot y) \cdot z-x\cdot ( y\cdot
z)=( y\cdot x) \cdot z -y\cdot ( x\cdot z).
\]
\end{Definition}

\begin{Definition}
A compatible pre-Lie algebra is a vector space ${\bf A}$ equipped with
two multiplications: $\cdot$ and another multiplication $\ast$
such that $({\bf A}, \cdot)$, $({\bf A}, \ast)$ and
$({\bf A}, \cdot+ \ast)$ are pre-Lie algebras.
These two operations are required to satisfy the following identities:
\begin{gather*}
( x\cdot y) \cdot z-x\cdot ( y\cdot
z)=( y\cdot x) \cdot z -y\cdot ( x\cdot z),\\
( x\ast y) \ast z-x\ast ( y\ast
z)=( y\ast x) \ast z -y\ast ( x\ast z),\\
(x\ast y)\cdot z-x\ast (y\cdot z)+(x\cdot y)\ast z-x\cdot (y\ast z)\\
\qquad =
(y\ast x)\cdot z-y\ast (x\cdot z)+(y\cdot x)\ast z-y\cdot (x\ast z).
 \end{gather*}
The main examples of compatible pre-Lie algebras are the following:
compatible commutative associative,
compatible associative
and compatible Novikov algebras.
\end{Definition}

\begin{Definition}
Let $( {\bf A},\cdot ) $ be a pre-Lie
algebra. Define ${\rm Z}^{2}( {\bf A},{\bf A}) $ to be the
set of all bilinear maps
$\theta \colon{\bf A}\times {\bf A}\longrightarrow {\bf A}$ such that
\begin{gather*}
\theta(\theta( x, y), z)-\theta(x, \theta ( y, z))=\theta(\theta( y, x) , z) - \theta(y, \theta ( x, z)),\\
\theta(x, y)\cdot z-\theta(x, y\cdot z)
+\theta(x\cdot y, z) -x\cdot \theta(y, z)\\
\qquad =
\theta(y, x)\cdot z-\theta(y, x\cdot z)+
\theta(y\cdot x, z)-y\cdot \theta(x, z) .
 \end{gather*}
Then ${\rm Z}^{2}({\bf A},{\bf A})\neq {\varnothing }$ since $\theta=0\in {\rm Z}^{2}({\bf A},{\bf A})$.
\end{Definition}

Now, for $\theta \in {\rm Z}^{2}( {\bf A},{\bf A})$, let us define a multiplication
$\bullet_{\theta }$ on ${\bf A}$ by $x\bullet_{\theta}y=\theta ( x,y) $ for all~${x,y \in {\bf A}}$.
Then $( {\bf A},\cdot ,\bullet_{\theta }) $ is a compatible pre-Lie
algebra.
Conversely, if $( {\bf A},\cdot ,\bullet ) $ is a compatible pre-Lie algebra,
then there exists $\theta \in {\rm Z}^{2}(
{\bf A},{\bf A}) $ such that $(
{\bf A},\cdot ,\bullet_{\theta }) \cong( {\bf A},\cdot ,\bullet
)$. To see this, consider the bilinear map $\theta \colon{\bf A}\times {\bf A}\longrightarrow {\bf A}$ defined by $
\theta ( x,y) = x\bullet y$ for all $x,y$ in ${\bf A}$. Then $\theta \in {\rm Z}^{2}(
{\bf A},{\bf A}) $ and $(
{\bf A},\cdot ,\bullet_\theta ) =( {\bf A},\cdot ,\bullet
)$.

Let $( {\bf A},\cdot ) $ be a pre-Lie
algebra. The automorphism group ${\rm Aut}( {\bf A}) $
of ${\bf A}$ acts on ${\rm Z}^{2}( {\bf A},{\bf A}) $~by
\[
(\theta *\phi) ( x,y) =\phi ^{-1}( \theta ( \phi (
x) ,\phi ( y) ) )
\]
for $\phi \in {\rm Aut}
( {\bf A}) $ and $\theta \in {\rm Z}^{2}( {\bf A}, {\bf A}) $.

\begin{Lemma}
\label{isom}Let $( {\bf A},\cdot ) $ be a pre-Lie algebra
and $\theta ,\vartheta \in {\rm Z}^{2}( {\bf A},{\bf A}) $.
Then $( {\bf A},\cdot ,\bullet_{\theta }) $ and $%
( {\bf A},\cdot ,\bullet_{\vartheta }) $ are
isomorphic if and only if there is a linear map $\phi \in {\rm Aut}( {\bf A}
)$ such that $\theta *\phi =\vartheta $.
\end{Lemma}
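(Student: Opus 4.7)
The plan is to unfold the definition of isomorphism of compatible pre-Lie algebras and match it directly against the action $\theta*\phi=\phi^{-1}(\theta(\phi(\cdot),\phi(\cdot)))$. An isomorphism $\psi\colon({\bf A},\cdot,\bullet_{\vartheta})\to({\bf A},\cdot,\bullet_{\theta})$ is, by definition, a linear bijection of ${\bf A}$ satisfying both $\psi(x\cdot y)=\psi(x)\cdot\psi(y)$ and $\psi(x\bullet_{\vartheta}y)=\psi(x)\bullet_{\theta}\psi(y)$ for all $x,y\in{\bf A}$. Using the defining relation $x\bullet_{\theta}y=\theta(x,y)$, the second identity rewrites as $\psi(\vartheta(x,y))=\theta(\psi(x),\psi(y))$.

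For the ``only if'' direction, given such a $\psi$, the first identity says precisely that $\psi\in{\rm Aut}({\bf A})$, and applying $\psi^{-1}$ to the rewritten second identity yields $\vartheta(x,y)=\psi^{-1}(\theta(\psi(x),\psi(y)))=(\theta*\psi)(x,y)$, so $\phi:=\psi$ does the job. For the ``if'' direction, given $\phi\in{\rm Aut}({\bf A})$ with $\theta*\phi=\vartheta$, I would apply $\phi$ to $\vartheta(x,y)=\phi^{-1}(\theta(\phi(x),\phi(y)))$ to obtain $\phi(\vartheta(x,y))=\theta(\phi(x),\phi(y))$, that is, $\phi(x\bullet_{\vartheta}y)=\phi(x)\bullet_{\theta}\phi(y)$. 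Combined with $\phi\in{\rm Aut}({\bf A},\cdot)$, this shows that $\phi$ itself is the required isomorphism of compatible pre-Lie algebras.

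The argument is a one-line bookkeeping translation, and no step is genuinely hard: the only thing to keep careful track of is which side of the equality carries $\phi$ and which carries $\phi^{-1}$. As a sanity check one should also verify that the action is well defined, i.e., that $\theta*\phi$ again lies in ${\rm Z}^{2}({\bf A},{\bf A})$; this follows by transporting each of the two cocycle identities through $\phi$ and using that $\phi$ preserves $\cdot$. Thus the lemma reduces the classification of compatible pre-Lie structures extending $({\bf A},\cdot)$ to the description of ${\rm Aut}({\bf A})$-orbits on ${\rm Z}^{2}({\bf A},{\bf A})$, which is exactly how the subsequent algebraic classification is organised.
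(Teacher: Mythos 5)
Your proof is correct and follows essentially the same route as the paper's: both directions are the same one-line unfolding of the isomorphism condition $\phi(x\bullet_{\vartheta}y)=\phi(x)\bullet_{\theta}\phi(y)$ into $\vartheta=\theta*\phi$ via the definition of the action. Your added remark that $\theta*\phi$ again lies in ${\rm Z}^{2}({\bf A},{\bf A})$ is a sensible sanity check that the paper leaves implicit.
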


\begin{proof}
If $\theta * \phi =\vartheta $, then $\phi \colon( {\bf A},\cdot ,\bullet_{\vartheta }) \longrightarrow $ $( {\bf A},\cdot
,\bullet_{\theta }) $ is an isomorphism since
$\phi
( \vartheta ( x,y) ) =\theta ( \phi (
x) ,\phi ( y) ) $. On the other hand, if $\phi
\colon( {\bf A},\cdot ,\bullet_{\vartheta })
\longrightarrow $ $( {\bf A},\cdot ,\bullet _{\theta
}) $ is an isomorphism of compatible pre-Lie algebras, then $\phi \in {\rm Aut}%
( {\bf A}) $ and $\phi ( x\bullet _{\vartheta
}y ) = \phi ( x) \bullet _{\theta } \phi ( y)
 $. Hence
 \[
 \vartheta ( x,y) =\phi ^{-1}( \theta
( \phi ( x) ,\phi ( y) ) ) =(\theta *
\phi )( x,y),
 \]
 and therefore $\theta * \phi=\vartheta $.
\end{proof}

Consequently, we have a procedure to classify the compatible pre-Lie algebras with the given
associated pre-Lie algebra $( {\bf A},\cdot )
$. It consists of three steps:
\begin{enumerate}\itemsep=0pt
\item[(1)] compute ${\rm Z}^{2}( {\bf A},{\bf A}) $,
\item[(2)] find the orbits of ${\rm Aut}( {\bf A}) $ on $%
{\rm Z}^{2}( {\bf A},{\bf A}) $,
\item[(3)] choose a representative $\theta$ from each orbit and then construct the compatible pre-Lie algebra $( {\bf A},\cdot,\bullet _{\theta }) $.
\end{enumerate}

\subsection{2-dimensional pre-Lie algebras}

\begin{Lemma}
 Let $\mathcal{C}$ be a nonzero $2$-dimensional pre-Lie algebra.
Then $\mathcal{C}$ is isomorphic to one and only one of the following
algebras:
\begin{gather*}
 \mathcal{C}_{01} \colon\ e_{1} \cdot e_{1} = e_{1} + e_{2},\quad e_{2} \cdot e_{1} =e_{2},\\
 \mathcal{C}_{02} \colon\ e_{1}\cdot e_{1} = e_{1} + e_{2},\quad e_{1}\cdot e_{2} =e_{2}, \\
 \mathcal{C}_{03} \colon\ e_{1}\cdot e_{1} =e_{2}, \\
 \mathcal{C}_{04} \colon\ e_{2} \cdot e_{1} =e_{1}, \\
 \mathcal{C}_{05}^{\alpha} \colon\ e_{1}\cdot e_{1} =e_{1}, \quad e_{1}\cdot e_{2} = \alpha e_{2}, \\
 \mathcal{C}_{06}^{\alpha} \colon\ e_{1} \cdot e_{1} =e_{1}, \quad e_{1} \cdot e_{2} = \alpha e_{2},\quad e_{2} \cdot e_{1} =e_{2}, \\
 \mathcal{C}_{07} \colon\ e_{1}\cdot e_{1} =e_{1}, \quad e_{2}\cdot e_{2}=e_{2}, \\
 \mathcal{C}_{08} \colon\ e_{1}\cdot e_{1} =e_{1}, \quad e_{1}\cdot e_{2} = 2 e_{2}, \quad e_{2}\cdot e_{1} =\tfrac{1}{2} e_{1} + e_2, \quad e_{2}\cdot e_{2}=e_{2}.
\end{gather*}
\end{Lemma}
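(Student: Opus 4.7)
The plan is to reduce the problem to a manageable polynomial system and then exploit the associated Lie algebra as a first invariant. Writing a generic multiplication $e_i\cdot e_j=\alpha_{ij}^{1}e_{1}+\alpha_{ij}^{2}e_{2}$ in a chosen basis $\{e_1,e_2\}$, the pre-Lie identity needs only be checked on basis triples $(e_i,e_j,e_k)$. Because the identity is antisymmetric in the first two arguments, many of these triples are trivial, and the remaining ones give a finite (though messy) system of polynomial equations in the eight unknowns $\alpha_{ij}^{k}$.

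Rather than tackle this system head-on, I would first use the fact that any pre-Lie algebra gives rise to a Lie algebra via $[x,y]=x\cdot y-y\cdot x$. In dimension two there are only two Lie algebras up to isomorphism: the abelian one, and the unique non-abelian Lie algebra $\mathfrak{r}_{2}$ with $[e_{1},e_{2}]=e_{1}$. This splits the classification into two manageable subcases. In the abelian case, commutativity plus the pre-Lie identity forces associativity, so one invokes the classical list of $2$-dimensional commutative associative algebras, which produces (after a $\mathrm{GL}_{2}(\mathbb{C})$-change of basis) the representatives $\mathcal{C}_{03}$, $\mathcal{C}_{05}^{0}$, $\mathcal{C}_{05}^{1}$ and $\mathcal{C}_{07}$, together with the parametric reductions. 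In the non-abelian case, choose a basis in which $[e_{1},e_{2}]=e_{1}$; this pins down four of the structure constants in terms of the other four, and the pre-Lie identity becomes a small set of quadratic relations that can be solved by a short case analysis on whether certain coefficients vanish.

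For each solution family, I would then use the stabilizer of the normal form of the commutator Lie bracket inside $\mathrm{GL}_{2}(\mathbb{C})$ to reduce the remaining parameters to canonical shape, arriving at the representatives $\mathcal{C}_{01}$, $\mathcal{C}_{02}$, $\mathcal{C}_{04}$, $\mathcal{C}_{06}^{\alpha}$ and $\mathcal{C}_{08}$. Non-isomorphism between classes can be certified by standard discrete invariants: the isomorphism type of the associated Lie algebra, the number of linearly independent left and right identities, the set of idempotents, the dimension of the left/right annihilator, and whether the algebra is commutative or associative. Within the continuous family $\mathcal{C}_{06}^{\alpha}$, distinct values of $\alpha$ are distinguished by the eigenvalues of the left multiplication operator $L_{e_{1}}$, which is a $\mathrm{GL}_{2}(\mathbb{C})$-invariant once a canonical idempotent is fixed.

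The main obstacle is the combinatorial bookkeeping in the non-abelian case: the pre-Lie equations split into several branches depending on which structure constants vanish, and one has to be careful not to duplicate algebras that become isomorphic under the residual basis freedom. I would mitigate this by always normalising the idempotent part of the multiplication first (choosing $e_1$ so that $e_{1}\cdot e_{1}=e_{1}$ whenever possible), which cuts the residual symmetry group down to a small torus and makes the remaining parameter reduction essentially mechanical.
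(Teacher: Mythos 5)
The paper does not actually prove this lemma: it is quoted as the known classification of two-dimensional pre-Lie (left-symmetric) algebras, available e.g.\ in Burde's survey and deducible from the classification of all two-dimensional algebras in the cited literature. Your strategy — split according to the associated Lie algebra $[x,y]=x\cdot y-y\cdot x$, which in dimension $2$ is either abelian or the unique non-abelian Lie algebra; observe that the abelian case forces commutativity and hence (via the pre-Lie identity) associativity, so that the classical list of commutative associative algebras applies; and handle the non-abelian case by normalising the bracket and reducing the remaining structure constants by the stabilizer — is exactly the standard route to this result, and it is sound in outline. The non-isomorphism certificates you propose (commutator Lie algebra, idempotents, eigenvalues of $L_{e_1}$ after fixing the canonical idempotent) do suffice to separate the classes.

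However, your bookkeeping of which representatives land in which branch contains concrete errors that would have to be repaired before the sketch delivers the stated list. First, $\mathcal{C}_{05}^{1}$ ($e_1\cdot e_1=e_1$, $e_1\cdot e_2=e_2$, $e_2\cdot e_1=0$) is \emph{not} commutative — its commutator is $[e_1,e_2]=e_2\neq 0$ — so it cannot arise in the abelian branch; the four commutative representatives are $\mathcal{C}_{03}$, $\mathcal{C}_{05}^{0}$, $\mathcal{C}_{06}^{1}$ and $\mathcal{C}_{07}$ (the algebra $\mathbb{C}[x]/(x^2)$ is $\mathcal{C}_{06}^{1}$, not $\mathcal{C}_{05}^{1}$). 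Second, your list of non-abelian representatives omits the whole family $\mathcal{C}_{05}^{\alpha\neq 0}$, which has non-abelian commutator and must appear there; conversely $\mathcal{C}_{06}^{1}$ has abelian commutator and should be excluded from that branch. Third, fixing $[e_1,e_2]=e_1$ imposes only the two scalar relations expressing $e_2\cdot e_1$ in terms of $e_1\cdot e_2$, leaving six free structure constants rather than four, so the non-abelian case analysis is somewhat larger than you suggest. None of these defects is structural — the method goes through once the families $\mathcal{C}_{05}^{\alpha}$ and $\mathcal{C}_{06}^{\alpha}$ are correctly distributed between the two branches (each straddles both: $\alpha=0$, resp.\ $\alpha=1$, is the commutative member) — but as written the sketch would not reproduce the lemma's list without duplication or omission.
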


\begin{Lemma}
\label{pre-Lie aut} The description of the group of automorphisms of every $2
$-dimensional pre-Lie algebra is given as follows:
\begin{enumerate}\itemsep=0pt
\item[$(1)$] If $\phi \in \mathrm{Aut}(\mathcal{C}_{01})$, then $\phi
(e_{1})=e_{1}+\nu e_{2}$ and $\phi (e_{2})=e_{2}$ for $\nu \in \mathbb{C}$.

\item[$(2)$] If $\phi \in \mathrm{Aut}(\mathcal{C}_{02})$, then $\phi
(e_{1})=e_{1}+\nu e_{2}$ and $\phi (e_{2})=e_{2}$ for $\nu \in \mathbb{C}$.

\item[$(3)$] If $\phi \in \mathrm{Aut}(\mathcal{C}_{03})$, then $\phi (e_{1})=\xi
e_{1}+\nu e_{2}$ and $\phi (e_{2})=\xi ^{2}e_{2}$ for $\xi \in \mathbb{C}%
^{\ast }$ and $\nu \in \mathbb{C}$.

\item[$(4)$] If $\phi \in \mathrm{Aut}(\mathcal{C}_{04})$, then $\phi (e_{1})=\xi
e_{1}$ and $\phi (e_{2})=e_{2}$ for $\xi \in \mathbb{C}^{\ast }$.

\item[$(5)$] If $\phi \in \mathrm{Aut}\bigl(\mathcal{C}_{05}^{\alpha \neq 1}\bigr)$, then $%
\phi (e_{1})=e_{1}$ and $\phi (e_{2})=\xi e_{2}$ for $\xi \in \mathbb{C}%
^{\ast }$.

\item[$(6)$] If $\phi \in \mathrm{Aut}\bigl(\mathcal{C}_{05}^{1}\bigr)$, then $\phi
(e_{1})=e_{1}+\nu e_{2}$ and $\phi (e_{2})=\xi e_{2}$ for $\xi \in \mathbb{C%
}^{\ast }$ and $\nu \in \mathbb{C}$.

\item[$(7)$] If $\phi \in \mathrm{Aut}\bigl(\mathcal{C}_{06}^{\alpha \neq 0}\bigr)$, then $%
\phi (e_{1})=e_{1}$ and $\phi (e_{2})=\xi e_{2}$ for $\xi \in \mathbb{C}%
^{\ast }$.

\item[$(8)$] If $\phi \in \mathrm{Aut}\bigl(\mathcal{C}_{06}^{0}\bigr)$, then $\phi
(e_{1})=e_{1}+\nu e_{2}$ and $\phi (e_{2})=\xi e_{2}$ for $\xi \in \mathbb{C%
}^{\ast }$ and $\nu \in \mathbb{C}$.

\item[$(9)$] If $\phi \in \mathrm{Aut}(\mathcal{C}_{07})$, then $\phi \in \mathbb{S}%
_{2}$, i.e., $\phi (e_{1})=e_{1}$, $ \phi (e_{2})=e_{2}$ or $\phi
(e_{1})=e_{2}$, $ \phi (e_{2})=e_{1}.$

\item[$(10)$] If $\phi \in \mathrm{Aut}(\mathcal{C}_{08})$, then $\phi
(e_{1})=e_{1}$, $\phi (e_{2})=e_{2}$ or $\phi (e_{1})=-e_{1}+4e_{2}$, $\phi
(e_{2})=e_{2}$.
\end{enumerate}
\end{Lemma}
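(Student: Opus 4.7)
The plan is a direct case-by-case computation: for each of the ten algebras in the preceding classification (counting $\mathcal{C}_{05}^\alpha$ and $\mathcal{C}_{06}^\alpha$ separately in their generic and exceptional parameter values), I would parametrize an arbitrary linear endomorphism by $\phi(e_1)=ae_1+be_2$ and $\phi(e_2)=ce_1+de_2$ with $ad-bc\neq 0$, and then impose the four multiplicativity conditions $\phi(e_i\cdot e_j)=\phi(e_i)\cdot\phi(e_j)$ for $i,j\in\{1,2\}$. Expanding both sides using the given multiplication table and comparing the coefficients of $e_1$ and $e_2$ yields a small system of polynomial equations in $a,b,c,d$ whose solutions (together with the invertibility condition) describe $\mathrm{Aut}(\mathcal{C})$.

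For a typical case such as $\mathcal{C}_{01}$, the relation $e_2\cdot e_1=e_2$ forces $c=0$ and $d=a$, while $e_1\cdot e_1=e_1+e_2$ then gives $a^2=a$ and $a^2+b\cdot a\cdot 0+\dots=a+b$ (after careful substitution), which combined with invertibility pin down $a=d=1$ and leave $b=\nu$ free. Cases $\mathcal{C}_{02}$, $\mathcal{C}_{03}$, $\mathcal{C}_{04}$, and the generic branches of $\mathcal{C}_{05}^{\alpha\neq 1}$ and $\mathcal{C}_{06}^{\alpha\neq 0}$ proceed analogously, with each multiplication table pinning down all but one or two parameters and producing the asserted continuous family.

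The exceptional parameter values deserve separate treatment. For $\mathcal{C}_{05}^{1}$ the relation $e_1\cdot e_2=e_2$ becomes symmetric enough that the constraint forcing $b=0$ in the generic case collapses, so a free translation $\nu$ appears; the same mechanism explains the extra parameter in $\mathrm{Aut}(\mathcal{C}_{06}^{0})$ since $e_1\cdot e_2=0$ no longer restricts the $e_2$-component of $\phi(e_1)$. For $\mathcal{C}_{07}$, the two idempotent relations $e_i\cdot e_i=e_i$ together with the vanishing of the mixed products force $\phi$ to permute $\{e_1,e_2\}$, yielding the symmetric group $\mathbb{S}_2$. For $\mathcal{C}_{08}$, solving the quadratic constraint coming from $e_2\cdot e_1=\tfrac12 e_1+e_2$ gives exactly two roots, corresponding to the identity and the involution $e_1\mapsto -e_1+4e_2$, $e_2\mapsto e_2$.

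The main obstacle is not conceptual but combinatorial bookkeeping: one must verify that no spurious solutions are lost when dividing by a coefficient that could vanish, and one must check that the exceptional branches at $\alpha=1$ (for $\mathcal{C}_{05}$) and $\alpha=0$ (for $\mathcal{C}_{06}$) are genuinely the only points where the automorphism group jumps, which amounts to tracking exactly which entries of the polynomial system degenerate at those parameter values. Once these degeneracy checks are performed, the ten descriptions follow mechanically.
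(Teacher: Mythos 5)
Your proposal is correct and coincides with the routine verification that the paper leaves implicit: Lemma \ref{pre-Lie aut} is stated without proof, and the intended argument is exactly the direct one you describe, namely writing $\phi(e_1)=ae_1+be_2$, $\phi(e_2)=ce_1+de_2$ with $ad-bc\neq 0$ and solving the four equations $\phi(e_i\cdot e_j)=\phi(e_i)\cdot\phi(e_j)$ case by case, with the parameter values $\alpha=1$ for $\mathcal{C}_{05}^{\alpha}$ and $\alpha=0$ for $\mathcal{C}_{06}^{\alpha}$ singled out as the points where a constraint degenerates. One cosmetic remark: in your sample computation for $\mathcal{C}_{01}$ it is the condition $\phi(e_2)\cdot\phi(e_2)=\phi(e_2\cdot e_2)=0$, giving $c^2=0$, that most directly forces $c=0$ (after which $a^2=a$ and invertibility give $a=1$ and then $d=1$), but this does not affect the validity of the method.
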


\subsection{The algebraic classification of compatible pre-Lie algebras}
The main aim of the present section is to prove the following results.

\begin{Theorem}
\label{compre2} Let $\mathcal{C}$ be a nonzero $2$-dimensional compatible pre-Lie algebra.
Then $\mathcal{C}$ is isomorphic to one {and only one} of the following
algebras:
\begin{gather*}
 \mathcal{C}_{01} \colon\ e_{1} \ast e_{1} = e_{1} + e_{2},\quad e_{2} \ast e_{1} =e_{2},\\
 \mathcal{C}_{02} \colon\ e_{1}\ast e_{1} = e_{1} + e_{2}, \quad e_{1}\ast e_{2} =e_{2}, \\
 \mathcal{C}_{03} \colon\ e_{1}\ast e_{1} =e_{2},\\
 \mathcal{C}_{04} \colon\ e_{2} \ast e_{1} =e_{1}, \\
 \mathcal{C}_{05}^{\alpha} \colon\ e_{1}\ast e_{1} =e_{1},\quad e_{1}\ast e_{2} = \alpha e_{2}, \\
 \mathcal{C}_{06}^{\alpha} \colon\ e_{1} \ast e_{1} =e_{1}, \quad e_{1} \ast e_{2} = \alpha e_{2}, \quad e_{2} \ast e_{1} =e_{2}, \\
 \mathcal{C}_{07} \colon\ e_{1}\ast e_{1} =e_{1},\quad e_{2}\ast e_{2}=e_{2}, \\
 \mathcal{C}_{08} \colon\ e_{1}\ast e_{1} =e_{1}, \quad e_{1}\ast e_{2} = 2 e_{2},\quad e_{2}\ast e_{1} =\tfrac{1}{2} e_{1} + e_2, \quad e_{2}\ast e_{2}=e_{2}, \\
 \mathcal{C}_{13}^{1 }\colon\
e_{1}\cdot e_{1}=e_{2}, \quad
e_{1}\ast e_{1}= e_{1},\quad e_{1}\ast e_{2}=e_{2},\quad e_{2}\ast
e_{1}= e_{2},
\\
 \mathcal{C}_{15}^{\alpha }\colon\
 e_{1}\cdot e_{1}=e_{2}, \quad
 e_{1}\ast e_{1}=\alpha e_{2},
\\
 \mathcal{C}_{16}^{0 }\colon\
 e_{1}\cdot e_{1}=e_{2}, \quad
 e_{1}\ast e_{1}=e_{1},
\\
 \mathcal{C}_{18}^{\alpha }\colon\
 e_{1}\cdot e_{1}=e_{2}, \quad
 e_{1}\ast e_{1}=\alpha e_{2},\quad e_{1}\ast e_{2}=e_{1},\quad e_{2}\ast
e_{1}=e_{1}, \quad e_{2}\ast e_{2}=e_{2},
\\
 \mathcal{C}_{24}^{0 ,\beta ,0 }\colon\
 e_{1}\cdot e_{1}=e_{1}, \quad
 e_{1}\ast e_{1}=\beta e_{1}+e_{2},
\\
 \mathcal{C}_{25}^{0 ,\beta ,0 }\colon\
 e_{1}\cdot e_{1}=e_{1}, \quad
 e_{1}\ast e_{1}=\beta e_{1},
\\
 \mathcal{C}_{29}^{\alpha }\colon\
 e_{1}\cdot e_{1}=e_{1}, \quad
 e_{1}\ast e_{1}=\alpha e_{1}, \quad e_{2}\ast e_{2}=e_{2},
\\
 \mathcal{C}_{30}^{\alpha ,\beta }\colon\
 e_{1}\cdot e_{1}=e_{1}, \quad
 e_{1}\ast e_{1}=\alpha e_{1}+\beta e_{2}, \quad e_{1}\ast e_{2}=e_{1}, \quad
e_{2}\ast e_{1}=e_{1}, \quad
 e_{2}\ast e_{2}=e_{2},
\\
 \mathcal{C}_{31}^{1 ,\beta ,\beta } \colon\
 e_{1}\cdot e_{1}=e_{1}, \quad e_{1}\cdot e_{2}= e_{2}, \quad e_{2}\cdot
e_{1}=e_{2}, \\
\hphantom{\mathcal{C}_{31}^{1 ,\beta ,\beta } \colon} \ e_{1}\ast e_{1}=\beta e_{1}+e_{2}, \quad e_{1}\ast e_{2}=\beta e_{2}, \quad
e_{2}\ast e_{1}=\beta e_{2},
\\
 \mathcal{C}_{32}^{1 ,\beta ,\beta }\colon\
 e_{1}\cdot e_{1}=e_{1}, \quad e_{1}\cdot e_{2}= e_{2}, \quad e_{2}\cdot
e_{1}=e_{2}, \\
\hphantom{\mathcal{C}_{32}^{1 ,\beta ,\beta }\colon} \ e_{1}\ast e_{1}=\beta e_{1}, \quad e_{1}\ast e_{2}=\beta e_{2}, \quad
e_{2}\ast e_{1}=\beta e_{2},
 \\
 \mathcal{C}_{34}^{\alpha ,\beta }\colon\
 e_{1}\cdot e_{1}=e_{1}, \quad e_{1}\cdot e_{2}=e_{2}, \quad e_{2}\cdot
e_{1}=e_{2}, \\
\hphantom{\mathcal{C}_{34}^{\alpha ,\beta }} \ e_{1}\ast e_{1}=\alpha e_{1}, \quad e_{1}\ast e_{2}=\alpha e_{2}, \quad
e_{2}\ast e_{1}=\alpha e_{2}, \quad e_{2}\ast e_{2}=e_{1}+\beta e_{2},
\\
 \mathcal{C}_{35}^{\alpha }\colon\
 e_{1}\cdot e_{1}=e_{1}, \quad e_{1}\cdot e_{2}=e_{2}, \quad e_{2}\cdot
e_{1}=e_{2}, \\
\hphantom{\mathcal{C}_{35}^{\alpha }\colon} \ e_{1}\ast e_{1}=\alpha e_{1}, \quad e_{1}\ast e_{2}=\alpha e_{2}, \quad
e_{2}\ast e_{1}=\alpha e_{2}, \quad e_{2}\ast e_{2}=e_{2},
 \\
 \mathcal{C}_{38}^{\alpha ,\beta ,\gamma }\colon\
 e_{1}\cdot e_{1}=e_{1}, \quad e_{2}\cdot e_{2}=e_{2}, \quad
 e_{1}\ast e_{1}=( \gamma +\beta -\alpha ) e_{1}-\beta e_{2},\\
\hphantom{\mathcal{C}_{38}^{\alpha ,\beta ,\gamma }\colon} \ e_{1}\ast e_{2}=\alpha e_{1}+\beta e_{2}, \quad
 e_{2}\ast e_{1}=\alpha
e_{1}+\beta e_{2}, \quad e_{2}\ast e_{2}=-\alpha e_{1}+\gamma e_{2},
\\
 \mathcal{C}_{39}^{\alpha ,\beta \neq \alpha }\colon\
 e_{1}\cdot e_{1}=e_{1}, \quad e_{2}\cdot e_{2}=e_{2}, \quad
 e_{1}\ast e_{1}=\alpha e_{1}, \quad e_{2}\ast e_{2}=\beta e_{2}.
 \end{gather*}

All algebras are non-isomorphic, except
\begin{gather*}
\mathcal{C}_{24}^{1 ,\beta ,\gamma } \cong \mathcal{C}_{25}^{1 , \beta ,\gamma },\quad
\mathcal{C}_{31}^{0 ,\beta ,\gamma } \cong \mathcal{C}_{32}^{0 , \beta ,\gamma },\quad
\mathcal{C}_{38}^{\alpha ,\beta ,\gamma } \cong \mathcal{C}_{38}^{\beta ,\alpha , -\alpha +\beta +\gamma }, \\
 \mathcal{C}_{39}^{\alpha ,\beta } \cong \mathcal{C}_{39}^{\beta,\alpha},\quad
\mathcal{C}_{40}^{\alpha ,\beta ,\gamma } \cong \mathcal{C}_{40}^{4\gamma -\alpha ,\beta
-8\alpha +16\gamma ,\gamma },\quad
 \mathcal{C}_{41}^{\alpha ,\beta } \cong \mathcal{C}%
_{41}^{ \alpha +4\beta ,-\beta }.
\end{gather*}
\end{Theorem}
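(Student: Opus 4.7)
The plan is to apply the three-step procedure from the end of Section~2.1 (compute $Z^2$, determine $\mathrm{Aut}$-orbits, extract representatives) to each of the eight $2$-dimensional pre-Lie algebras $\mathcal{C}_{01},\dots,\mathcal{C}_{08}$ from the first Lemma of Section~2.2, together with the degenerate case where the first multiplication is zero. In that degenerate case any pre-Lie multiplication $\ast$ yields a compatible pair $(0,\ast)$, and modding out by $\mathrm{Aut}(0)=\mathrm{GL}_2(\mathbb C)$ returns exactly the first Lemma of Section~2.2; this accounts for the eight algebras $\mathcal{C}_{01},\dots,\mathcal{C}_{08}$ in the statement (with the surviving multiplication relabeled as $\ast$).

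For each nonzero base $\mathcal{C}_{0i}$, I would parametrize a candidate cocycle by $\theta(e_j,e_k)=a_{jk}^{1}e_1+a_{jk}^{2}e_2$ with eight unknowns, and substitute into the two defining identities of $Z^{2}(\mathcal{C}_{0i},\mathcal{C}_{0i})$: the pre-Lie identity for $\theta$ by itself, and the mixed compatibility identity involving both $\theta$ and $\cdot$. This produces a linear system whose solution space is $Z^{2}(\mathcal{C}_{0i},\mathcal{C}_{0i})$; solving it is elementary but must be done case-by-case for the eight bases. Then I would let $\mathrm{Aut}(\mathcal{C}_{0i})$ act via $(\theta\ast\phi)(x,y)=\phi^{-1}(\theta(\phi(x),\phi(y)))$, using the explicit parametrizations from Lemma~\ref{pre-Lie aut}. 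In each case $\mathrm{Aut}(\mathcal{C}_{0i})$ is at most $2$-dimensional and often abelian, so one normalizes the free parameters of $\theta$ one at a time (scaling, killing an off-diagonal coordinate, etc.) until a canonical form is reached; the resulting representatives are precisely the families $\mathcal{C}_{13}^{1}$, $\mathcal{C}_{15}^{\alpha}$, \dots, $\mathcal{C}_{41}^{\alpha,\beta}$ listed in the statement. By Lemma~\ref{isom} these representatives classify compatible pre-Lie structures built on that fixed base up to isomorphism preserving the base.

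The main obstacle will be the final non-isomorphism assertions, because two compatible pre-Lie algebras built on non-isomorphic bases $\mathcal{C}_{0i}$, $\mathcal{C}_{0j}$ can still coincide as compatible algebras when a change of basis mixes the two products. Indeed, the compatibility relation is symmetric in $\cdot$ and $\ast$, so one has in general to compare a family $\mathcal{C}_{ij}$ built over $\mathcal{C}_{0i}$ against a family built over $\mathcal{C}_{0j}$ via an arbitrary $\phi\in\mathrm{GL}_2(\mathbb C)$. The explicit residual identifications listed at the end of the theorem,
\[
\mathcal{C}_{24}^{1,\beta,\gamma}\cong\mathcal{C}_{25}^{1,\beta,\gamma},\qquad
\mathcal{C}_{31}^{0,\beta,\gamma}\cong\mathcal{C}_{32}^{0,\beta,\gamma},\qquad
\mathcal{C}_{38}^{\alpha,\beta,\gamma}\cong\mathcal{C}_{38}^{\beta,\alpha,-\alpha+\beta+\gamma},
\]
together with those for $\mathcal{C}_{39}$, $\mathcal{C}_{40}$, $\mathcal{C}_{41}$, are verified directly by exhibiting the intertwining $\phi$ (for instance, swapping $e_1\leftrightarrow e_2$ yields the $\mathcal{C}_{39}$ symmetry, and an involution of the form $e_1\mapsto -e_1+4e_2$ produced by $\mathrm{Aut}(\mathcal{C}_{08})$ in Lemma~\ref{pre-Lie aut} yields the $\mathcal{C}_{40}$ identification).

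To confirm that no further identifications occur, I would package the comparison algorithmically: given two candidate representatives on possibly distinct bases, write down a generic $\phi\in\mathrm{GL}_2(\mathbb C)$, impose the conditions that $\phi$ intertwines both $\cdot$-products and both $\ast$-products, and show the resulting polynomial system in the entries of $\phi$ and in the parameters has no solution outside the listed coincidences. Discrete invariants (existence and number of $\cdot$- and $\ast$-idempotents, nilpotency type of each component, dimensions of the left and right annihilators for either product) can be used to discard most pairs cheaply, leaving only a few families for which the full polynomial computation is needed. This last bookkeeping is routine but is where the bulk of the casework sits.
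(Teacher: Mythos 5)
Your proposal follows essentially the same route as the paper: the classification is carried out base-by-base over the eight pre-Lie algebras $\mathcal{C}_{01},\dots,\mathcal{C}_{08}$ (plus the zero multiplication, which reproduces those eight algebras as the pairs $(0,\ast)$), computing $\mathrm{Z}^{2}$ in each case and normalizing under the automorphism groups of Lemma~\ref{pre-Lie aut}, exactly as in the subsections following the theorem. One correction: your stated ``main obstacle'' --- that algebras built over non-isomorphic bases $\mathcal{C}_{0i}$, $\mathcal{C}_{0j}$ might still be isomorphic because a change of basis ``mixes the two products'' --- does not arise, since an isomorphism of compatible algebras intertwines each multiplication separately, so the isomorphism class of $(\mathbf{A},\cdot)$ is an invariant and Lemma~\ref{isom} already reduces all possible identifications to the action of $\mathrm{Aut}(\mathbf{A},\cdot)$ on $\mathrm{Z}^{2}$; accordingly, every residual identification in the theorem (e.g., the $\mathcal{C}_{39}$ swap from $\mathbb{S}_2\subset\mathrm{Aut}(\mathcal{C}_{07})$ and the $\mathcal{C}_{40}$, $\mathcal{C}_{41}$ involutions from $\mathrm{Aut}(\mathcal{C}_{08})$) lives over a single base, and the cross-base comparison you propose is unnecessary.
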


\subsubsection[Compatible pre-Lie algebras defined on C\_\{01\}]{Compatible pre-Lie algebras defined on $\boldsymbol{\mathcal{C}_{01}}$}

From the computation of $\mathrm{Z}_{\mathrm{CPL}}^{2}(\mathcal{C}_{01},%
\mathcal{C}_{01})$, the compatible pre-Lie algebra structures defined on~$%
\mathcal{C}_{01}$ are of the form
$e_{1}\cdot e_{1}=e_{1}+e_{2}$, $ e_{2}\cdot e_{1}=e_{2}$, $e_{1}\ast e_{1}=\alpha _{1}e_{1}+\alpha _{2}e_{2}$, $e_{1}\ast
e_{2}=\alpha _{3}e_{2}$, $e_{2}\ast e_{1}=\alpha _{1}e_{2}$.
Then we have the following cases:
\begin{itemize}\itemsep=0pt
\item If $\alpha _{3}\neq 0$, then choose $\nu =-\frac{\alpha _{2}}{\alpha
_{3}}$ and obtain the parametric family
\begin{gather*}
\mathcal{C}_{09}^{\alpha ,\beta \neq 0} \colon \
e_{1}\cdot e_{1}=e_{1}+e_{2},\quad e_{2}\cdot e_{1}=e_{2},\\
\hphantom{\mathcal{C}_{09}^{\alpha ,\beta \neq 0} \colon} \ e_{1}\ast e_{1}=\alpha e_{1},\quad e_{1}\ast e_{2}=\beta e_{2},\quad
e_{2}\ast e_{1}=\alpha e_{2}.
\end{gather*}
The algebras $\mathcal{C}_{09}^{\alpha ,\beta }$ and \smash{$\mathcal{C}_{09}^{\alpha
^{\prime },\beta ^{\prime }}$} are isomorphic if and only if $(\alpha,\beta)= (\alpha^{\prime },\beta ^{\prime })$.

\item If $\alpha _{3}=0$, we obtain the parametric family
\begin{gather*}
\mathcal{C}_{10}^{\alpha ,\beta }\colon \
e_{1}\cdot e_{1}=e_{1}+e_{2},\quad e_{2}\cdot e_{1}=e_{2}, \quad e_{1}\ast e_{1}=\alpha e_{1}+\beta e_{2},\quad e_{2}\ast e_{1}=\alpha e_{2}.
\end{gather*}
The algebras $\mathcal{C}_{10}^{\alpha ,\beta }$ and $\mathcal{C}_{10}^{\alpha
^{\prime },\beta ^{\prime }}$ are isomorphic if and only if $(\alpha,\beta)= (\alpha^{\prime },\beta ^{\prime })$.
\end{itemize}

\subsubsection[Compatible pre-Lie algebras defined on C\_\{02\}]{Compatible pre-Lie algebras defined on $\boldsymbol{\mathcal{C}_{02}}$}

From the computation of $\mathrm{Z}_{\mathrm{CPL}}^{2}(\mathcal{C}_{02},%
\mathcal{C}_{02})$, the compatible pre-Lie algebra structures defined on~$%
\mathcal{C}_{02}$ are of the form
$
e_{1}\cdot e_{1}=e_{1}+e_{2}$, $ e_{1}\cdot e_{2}=e_{2}$, $
e_{1}\ast e_{1}=\alpha _{1}e_{1}+\alpha _{2}e_{2}$, $ e_{1}\ast
e_{2}=\alpha _{3}e_{2}$.
Then we have the following cases:
\begin{itemize}\itemsep=0pt
\item If $\alpha _{1}\neq\alpha _{3}$, then choose $\nu =\frac{\alpha _{2}%
}{\alpha _{1}-\alpha _{3}}$ and obtain the parametric family
\[
\mathcal{C}_{11}^{\alpha ,\beta\neq \alpha }\colon \
e_{1}\cdot e_{1}=e_{1}+e_{2},\quad e_{1}\cdot e_{2}=e_{2},\quad
e_{1}\ast e_{1}=\alpha e_{1},\quad e_{1}\ast e_{2}=\beta e_{2}.
\]
The algebras $\mathcal{C}_{11}^{\alpha ,\beta }$and $\mathcal{C}_{11}^{\alpha
^{\prime },\beta ^{\prime }}$ are isomorphic if and only if $(\alpha,\beta)= (\alpha^{\prime },\beta ^{\prime })$.

\item If $\alpha _{1}=\alpha _{3}$, we obtain the parametric family
\[
\mathcal{C}_{12}^{\alpha ,\beta } \colon \
e_{1}\cdot e_{1}=e_{1}+e_{2},\quad e_{1}\cdot e_{2}=e_{2}, \quad
e_{1}\ast e_{1}=\alpha e_{1}+\beta e_{2},\quad e_{1}\ast e_{2}=\alpha e_{2}.
\]
The algebras $\mathcal{C}_{12}^{\alpha ,\beta }$ and \smash{$\mathcal{C}_{12}^{\alpha
^{\prime },\beta ^{\prime }}$} are isomorphic if and only if $(\alpha,\beta)= (\alpha^{\prime },\beta ^{\prime })$.
\end{itemize}

\subsubsection[Compatible pre-Lie algebras defined on C\_\{03\}]{Compatible pre-Lie algebras defined on $\boldsymbol{\mathcal{C}_{03}}$}

From the computation of $\mathrm{Z}_{\mathrm{CPL}}^{2}(\mathcal{C}_{03},%
\mathcal{C}_{03})$, the compatible pre-Lie algebra structures defined on~$%
\mathcal{C}_{03}$ are of the following forms:
\begin{enumerate}\itemsep=0pt
\item[(1)] $
e_{1}\cdot e_{1}=e_{2}$, $
e_{1}\ast e_{1}=\alpha _{1}e_{1}+\alpha _{2}e_{2}$, $ e_{1}\ast
e_{2}=\alpha _{3}e_{2}$, $ e_{2}\ast e_{1}=\alpha _{1}e_{2}$,
\item[(2)] $
e_{1}\cdot e_{1}=e_{2}$, $
e_{1}\ast e_{1}=\alpha _{4}e_{1}+\alpha _{5}e_{2}$, $ e_{1}\ast
e_{2}=\alpha _{6}e_{2}$,
\item[(3)] $
e_{1}\cdot e_{1}=e_{2}$, $
e_{1}\ast e_{1}=\alpha _{7}e_{1}+\alpha _{8}e_{2}$, $ e_{1}\ast
e_{2}=\alpha _{9}e_{1}$, $ e_{2}\ast e_{1}=\alpha _{9}e_{1}$, $ e_{2}\ast
e_{2}=\alpha _{9}e_{2}$,
\item[(4)] $
e_{1}\cdot e_{1}=e_{2}$, $
e_{1}\ast e_{1}=\alpha _{10}e_{1}+\alpha _{11}e_{2}$, $ e_{1}\ast
e_{2}=2\alpha _{10}e_{2}$,
$
e_{2}\ast e_{1}=\alpha _{12}e_{1}+\alpha
_{10}e_{2}$, $ e_{2}\ast e_{2}=2\alpha _{12}e_{2}$.
\end{enumerate}

We may assume $\alpha _{4}\alpha _{9}\alpha _{12}\neq 0$.
First, we consider the first form. Then we have the following cases:
\begin{itemize}\itemsep=0pt
\item If $\alpha _{3}\neq 0$, then choose $\xi =\alpha^{-1} _{3}$, $\nu =-%
{\alpha _{2}}{\alpha _{3}^{-2}}$ and obtain the parametric family
\[
\mathcal{C}_{13}^{\alpha } \colon \
e_{1}\cdot e_{1}=e_{2},\quad
e_{1}\ast e_{1}=\alpha e_{1},\quad e_{1}\ast e_{2}=e_{2},\quad e_{2}\ast
e_{1}=\alpha e_{2}.
\]
The algebras $\mathcal{C}_{13}^{\alpha }$ and $\mathcal{C}_{13}^{\alpha ^{\prime
}} $ are isomorphic if and only if $\alpha =\alpha ^{\prime }$.

\item If $\alpha _{3}=0,\alpha _{1}\neq 0$, then choose $\xi ={%
\alpha^{-1} _{1}}$ and obtain the parametric family
\[
\mathcal{C}_{14}^{\alpha } \colon \
e_{1}\cdot e_{1}=e_{2},\quad
e_{1}\ast e_{1}=e_{1}+\alpha e_{2},\quad e_{2}\ast e_{1}=e_{2}.
\]
The algebras $\mathcal{C}_{14}^{\alpha }$ and $\mathcal{C}_{14}^{\alpha ^{\prime
}} $ are isomorphic if and only if $\alpha =\alpha ^{\prime }$.

\item If $\alpha _{1}=\alpha _{3}=0$, then we obtain the parametric family
\[
\mathcal{C}_{15}^{\alpha } \colon \
e_{1}\cdot e_{1}=e_{2}, \quad
e_{1}\ast e_{1}=\alpha e_{2}.
\]
The algebras $\mathcal{C}_{15}^{\alpha }$ and $\mathcal{C}_{15}^{\alpha ^{\prime
}} $ are isomorphic if and only if $\alpha =\alpha ^{\prime }$.
\end{itemize}

Second, we consider the second form. Then we have the following cases:
\begin{itemize}\itemsep=0pt
\item If $\alpha _{4}\neq\alpha _{6}$, then choose $\xi ={\alpha^{-1}
_{4}}$, $\nu =\frac{\alpha _{5}}{\alpha _{4}-\alpha _{6}}$
and obtain the parametric family
\[
\mathcal{C}_{16}^{\alpha }\colon \
e_{1}\cdot e_{1}=e_{2}, \quad
e_{1}\ast e_{1}=e_{1},\quad e_{1}\ast e_{2}=\alpha e_{2}.
\]
The algebras $\mathcal{C}_{16}^{\alpha }$ and $\mathcal{C}_{16}^{\alpha ^{\prime
}} $ are isomorphic if and only if $\alpha =\alpha ^{\prime }$.

\item If $\alpha _{4}=\alpha _{6}$, then choose $\xi ={\alpha^{-1} _{4}}$
and obtain the parametric family
\[
\mathcal{C}_{17}^{\alpha }\colon \
e_{1}\cdot e_{1}=e_{2}, \quad
e_{1}\ast e_{1}=e_{1}+\alpha e_{2},\quad e_{1}\ast e_{2}=e_{2}.
\]
The algebras $\mathcal{C}_{17}^{\alpha }$ and $\mathcal{C}_{17}^{\alpha ^{\prime}} $ are isomorphic if and only if $\alpha =\alpha ^{\prime }$.
\end{itemize}

Third, we consider the third form. Then, we choose \smash{$\xi ={\alpha^{-\frac 12}
_{9}}$}, \smash{$\nu =-\frac{\alpha _{7} \alpha _{9}^{-\frac{3}{2}}}{2 }$} and
obtain the parametric family
\[
\mathcal{C}_{18}^{\alpha }\colon \
e_{1}\cdot e_{1}=e_{2}, \quad
e_{1}\ast e_{1}=\alpha e_{2},\quad e_{1}\ast e_{2}=e_{1},\quad e_{2}\ast
e_{1}=e_{1},\quad e_{2}\ast e_{2}=e_{2}.
\]
The algebras $\mathcal{C}_{18}^{\alpha }$ and $\mathcal{C}_{18}^{\alpha ^{\prime}} $ are isomorphic if and only if $\alpha =\alpha ^{\prime }$.

Finally, we consider the fourth form. Then, we choose \smash{$\xi =
\alpha^{-\frac 12} _{12}$}, \smash{$\nu =-{\alpha _{10}}{\alpha _{12}^{-\frac{3}{2}}}$} and
obtain the parametric family
\[
\mathcal{C}_{19}^{\alpha }\colon \
e_{1}\cdot e_{1}=e_{2}, \quad
e_{1}\ast e_{1}=\alpha e_{2},\quad e_{2}\ast e_{1}=e_{1},\quad e_{2}\ast
e_{2}=2e_{2}.
\]

The algebras $\mathcal{C}_{19}^{\alpha }$ and $\mathcal{C}_{19}^{\alpha ^{\prime
}} $ are isomorphic if and only if $\alpha =\alpha ^{\prime }$.

\subsubsection[Compatible pre-Lie algebras defined on C\_\{04\}]{Compatible pre-Lie algebras defined on $\boldsymbol{\mathcal{C}_{04}}$}

From the computation of $\mathrm{Z}_{\mathrm{CPL}}^{2}(\mathcal{C}_{04},
\mathcal{C}_{04})$, the compatible pre-Lie algebra structures defined on~$\mathcal{C}_{04}$ are of the following forms:
\begin{enumerate}\itemsep=0pt
\item[(1)] $e_{2}\cdot e_{1}=e_{1}$,
$e_{2}\ast e_{1}=\alpha _{1}e_{1}$, $e_{2}\ast e_{2}=\alpha
_{2}e_{1}+\alpha _{3}e_{2}$,
\item[(2)] $e_{2}\cdot e_{1}=e_{1}$,
$e_{1}\ast e_{2}=\alpha _{4}e_{1}$, $e_{2}\ast e_{1}=\alpha _{5}e_{1}$, $e_{2}\ast e_{2}=\alpha _{6}e_{1}+\alpha _{4}e_{2}$.
\end{enumerate}

We may assume $\alpha _{4}\neq 0$.
First, we study the first form. Then
we have the following cases:
\begin{itemize}\itemsep=0pt
\item If $\alpha _{2}\neq 0$, then choose $\xi =\alpha _{2}$ and
obtain the parametric family
\[
\mathcal{C}_{20}^{\alpha ,\beta } \colon \
e_{2}\cdot e_{1}=e_{1}, \quad
e_{2}\ast e_{1}=\alpha e_{1}, \quad e_{2}\ast e_{2}=e_{1}+\beta e_{2}.
\]
The algebras \smash{$\mathcal{C}_{20}^{\alpha ,\beta }$} and \smash{$\mathcal{C}_{20}^{\alpha
^{\prime },\beta ^{\prime }}$} are isomorphic if and only if $(\alpha,\beta)= (\alpha^{\prime },\beta ^{\prime })$.

\item If $\alpha _{2}=0$, we obtain the parametric family
\[
\mathcal{C}_{21}^{\alpha ,\beta } \colon \
e_{2}\cdot e_{1}=e_{1}, \quad
e_{2}\ast e_{1}=\alpha e_{1},\quad e_{2}\ast e_{2}=\beta e_{2}.
\]
The algebras \smash{$\mathcal{C}_{21}^{\alpha ,\beta }$} and \smash{$\mathcal{C}_{21}^{\alpha
^{\prime },\beta ^{\prime }}$} are isomorphic if and only if $(\alpha,\beta)= (\alpha^{\prime },\beta ^{\prime })$.
\end{itemize}

Second, we consider the second form. Then we have the following cases:
\begin{itemize}\itemsep=0pt
\item If $\alpha _{6}\neq 0$, then choose $\xi =\alpha _{6}$ and
obtain the parametric family
\[
\mathcal{C}_{22}^{\alpha \neq 0,\beta } \colon \
e_{2}\cdot e_{1}=e_{1}, \quad
e_{1}\ast e_{2}=\alpha e_{1}, \quad e_{2}\ast e_{1}=\beta e_{1},\quad
e_{2}\ast e_{2}=e_{1}+\alpha e_{2}.
\]
The algebras \smash{$\mathcal{C}_{22}^{\alpha ,\beta }$} and \smash{$\mathcal{C}_{22}^{\alpha
^{\prime },\beta ^{\prime }}$} are isomorphic if and only if $(\alpha,\beta)= (\alpha^{\prime },\beta ^{\prime })$.

\item If $\alpha _{6}=0$, we obtain the parametric family
\[
\mathcal{C}_{23}^{\alpha \neq 0,\beta } \colon \
e_{2}\cdot e_{1}=e_{1}, \quad
e_{1}\ast e_{2}=\alpha e_{1}, \quad e_{2}\ast e_{1}=\beta e_{1}, \quad
e_{2}\ast e_{2}=\alpha e_{2}.
\]
The algebras \smash{$\mathcal{C}_{23}^{\alpha ,\beta }$} and \smash{$\mathcal{C}_{23}^{\alpha
^{\prime },\beta ^{\prime }}$} are isomorphic if and only if $(\alpha,\beta)= (\alpha^{\prime },\beta ^{\prime })$.
\end{itemize}

\subsubsection[Compatible pre-Lie algebras defined on C\_\{05\}\^{}\{alpha not = 0,1/2,1\}]{Compatible pre-Lie algebras defined on $\boldsymbol{\mathcal{C}_{05}^{\alpha \not= 0,\frac{1}{2},1}}$}

From the computation of $\mathrm{Z}_{\mathrm{CPL}}^{2}(\mathcal{C}%
_{05}^{\alpha },\mathcal{C}_{05}^{\alpha })$, the compatible pre-Lie algebra
structures defined on~$\mathcal{C}_{05}^{\alpha }$ are of the form
$e_{1}\cdot e_{1}=e_{1}$, $e_{1}\cdot e_{2}=\alpha e_{2}$, $e_{1}\ast e_{1}=\alpha _{1}e_{1}+\alpha _{2}e_{2}$, $e_{1}\ast
e_{2}=\alpha _{3}e_{2}$.
Then we have the following cases:
\begin{itemize}\itemsep=0pt
\item If $\alpha _{2}\neq 0$, then choose $\xi =\alpha _{2}$ and
obtain the parametric family
\[
\mathcal{C}_{24}^{\alpha ,\beta ,\gamma }\colon \
e_{1}\cdot e_{1}=e_{1}, \quad e_{1}\cdot e_{2}=\alpha e_{2}, \quad
e_{1}\ast e_{1}=\beta e_{1}+e_{2}, \quad e_{1}\ast e_{2}=\gamma e_{2}.
\]
The algebras \smash{$\mathcal{C}_{24}^{\alpha ,\beta ,\gamma }$} and \smash{$\mathcal{C}%
_{24}^{\alpha ^{\prime }\beta ^{\prime },\gamma ^{\prime }}$} are isomorphic
if and only if $(\alpha, \beta, \gamma)=(\alpha ^{\prime },\beta ^{\prime },
\gamma ^{\prime })$.

\item If $\alpha _{2}=0$, we obtain the parametric family
\[
\mathcal{C}_{25}^{\alpha ,\beta ,\gamma } \colon \
e_{1}\cdot e_{1}=e_{1}, \quad e_{1}\cdot e_{2}=\alpha e_{2}, \quad
e_{1}\ast e_{1}=\beta e_{1}, \quad e_{1}\ast e_{2}=\gamma e_{2}.
\]
The algebras \smash{$\mathcal{C}_{25}^{\alpha ,\beta ,\gamma }$} and \smash{$\mathcal{C}%
_{25}^{\alpha ^{\prime },\beta ^{\prime },\gamma ^{\prime }}$} are isomorphic
if and only if $(\alpha, \beta, \gamma)=(\alpha ^{\prime },\beta ^{\prime },
\gamma ^{\prime })$.
\end{itemize}

\subsubsection[Compatible pre-Lie algebras defined on C\_\{05\}\^{}\{1/2\}]{Compatible pre-Lie algebras defined on $\boldsymbol{\mathcal{C}_{05}^{\frac{1}{2}}}$}

From the computation of \smash{$\mathrm{Z}_{\mathrm{CPL}}^{2}\bigl(\mathcal{C}_{05}^{%
\frac{1}{2}},\mathcal{C}_{05}^{\frac{1}{2}}\bigr)$}, the compatible pre-Lie
algebra structures defined on~\smash{$\mathcal{C}_{05}^{\frac{1}{2}}$} are of the following forms:
\begin{enumerate}\itemsep=0pt
\item[(1)] $e_{1}\cdot e_{1}=e_{1}$, $e_{1}\cdot e_{2}=\frac{1}{2}e_{2}$,
$e_{1}\ast e_{1}=\alpha _{1}e_{1}+\alpha _{2}e_{2}$, $e_{1}\ast
e_{2}=\alpha _{3}e_{2}$,

\item[(2)] $e_{1}\cdot e_{1}=e_{1}$, $e_{1}\cdot e_{2}=\frac{1}{2}e_{2}$,
$e_{1}\ast e_{1}=2\alpha _{4}e_{1}$, $e_{1}\ast e_{2}=\alpha _{4}e_{2}$,
$e_{2}\ast e_{2}=\alpha _{5}e_{1}$,

\item[(3)] $e_{1}\cdot e_{1}=e_{1}$, $e_{1}\cdot e_{2}=\frac{1}{2}e_{2}$,
$e_{1}\ast e_{1}=2\alpha _{6}e_{1}$, $e_{1}\ast e_{2}=\alpha
_{7}e_{1}+\alpha _{6}e_{2}$, $e_{2}\ast e_{1}=2\alpha _{7}e_{1}$, $%
e_{2}\ast e_{2}=\alpha _{8}e_{1}+\alpha _{7}e_{2}$.
\end{enumerate}

We may assume $\alpha _{5}\alpha _{7}\neq 0$.
If the compatible pre-Lie
algebra structures defined on \smash{$\mathcal{C}_{05}^{\frac{1}{2}}$} is of the
first form, then we obtain the algebras \smash{$\mathcal{C}_{24}^{\frac{1}{2%
},\beta ,\gamma }$} and \smash{$\mathcal{C}_{25}^{\frac{1}{2},\beta ,\gamma
} $}.

Assume now that the compatible pre-Lie algebra structures defined on \smash{$%
\mathcal{C}_{05}^{\frac{1}{2}}$} is of the second form. Then, choose \smash{$\xi =%
{\alpha^{-\frac 12} _{5}}$} and obtain the parametric family
\[
\mathcal{C}_{26}^{\alpha } \colon \
e_{1}\cdot e_{1}=e_{1}, \quad e_{1}\cdot e_{2}=\frac{1}{2}e_{2}, \quad
e_{1}\ast e_{1}=2\alpha e_{1},\quad e_{1}\ast e_{2}=\alpha e_{2},\quad
e_{2}\ast e_{2}=e_{1}.
\]
The algebras $\mathcal{C}_{26}^{\alpha }$ and $\mathcal{C}_{26}^{\alpha ^{\prime
}}$ are isomorphic if and only if $\alpha =\alpha ^{\prime }$.

Finally, assume that the compatible pre-Lie algebra structures defined on \smash{$%
\mathcal{C}_{05}^{\frac{1}{2}}$} is of the third form. Then, choose $\xi =%
{\alpha^{-1} _{7}}$ and obtain the parametric family
\begin{gather*}
\mathcal{C}_{27}^{\alpha ,\beta } \colon \
e_{1}\cdot e_{1}=e_{1},\quad e_{1}\cdot e_{2}=\tfrac{1}{2}e_{2}, \quad
e_{1}\ast e_{1}=2\alpha e_{1}, \\
\hphantom{\mathcal{C}_{27}^{\alpha ,\beta } \colon} \ e_{1}\ast e_{2}=e_{1}+\alpha e_{2}, \quad
e_{2}\ast e_{1}=2e_{1},\quad e_{2}\ast e_{2}=\beta e_{1}+e_{2}.
\end{gather*}
The algebras $\mathcal{C}_{27}^{\alpha ,\beta }$ and $\mathcal{C}_{27}^{\alpha
^{\prime },\beta ^{\prime }}$ are isomorphic if and only if $(\alpha,\beta)= (\alpha^{\prime },\beta ^{\prime })$.

\subsubsection[Compatible pre-Lie algebras defined on C\_\{05\}\^{}1]{Compatible pre-Lie algebras defined on $\boldsymbol{\mathcal{C}_{05}^{1}}$}

From the computation of $\mathrm{Z}_{\mathrm{CPL}}^{2}\bigl(\mathcal{C}_{05}^{1},%
\mathcal{C}_{05}^{1}\bigr)$, the compatible pre-Lie algebra structures defined on~$\mathcal{C}_{05}^{1}$ are of the following forms:
\begin{enumerate}\itemsep=0pt
\item[(1)] $e_{1}\cdot e_{1}=e_{1}$, $e_{1}\cdot e_{2}=e_{2}$,
$e_{1}\ast e_{1}=\alpha _{1}e_{1}+\alpha _{2}e_{2}$, $e_{1}\ast
e_{2}=\alpha _{3}e_{2}$,
\item[(2)] $e_{1}\cdot e_{1}=e_{1}$, $e_{1}\cdot e_{2}=e_{2}$,
$e_{1}\ast e_{1}=\alpha _{4}e_{1}$, $e_{1}\ast e_{2}=\alpha _{4}e_{2}$, $e_{2}\ast e_{1}=\alpha _{5}e_{1}$, $e_{2}\ast e_{2}=\alpha _{5}e_{2}$.
\end{enumerate}

We may assume $\alpha _{5}\neq 0$.
First, suppose that the compatible
pre-Lie algebra structures defined on $\mathcal{C}_{05}^{1}$ is of the first
form. Then we have the following cases:
\begin{itemize}\itemsep=0pt
\item If $\alpha _{1}\neq\alpha _{3}$, then choose $\nu =\frac{\alpha _{2}%
}{\alpha _{1}-\alpha _{3}}$ and obtain the parametric family \smash{$\mathcal{C}%
_{25}^{1,\beta ,\gamma \neq \beta }$}.

\item If $\alpha _{1}=\alpha _{3}$, then we will consider the following two cases:
\begin{itemize}\itemsep=0pt
\item If $\alpha _{2}\neq 0$, then choose $\xi =\alpha _{2}$ and
obtain the parametric family \smash{$\mathcal{C}%
_{24}^{1,\beta ,\beta }$}.
\item If $\alpha _{2}=0$, then we obtain the parametric family \smash{$\mathcal{C}%
_{25}^{1,\beta ,\beta }$}.
\end{itemize}
\end{itemize}

Assume now that the compatible pre-Lie algebra structures defined on $%
\mathcal{C}_{05}^{1}$ is of the second form. Then, choose $\xi ={%
\alpha^{-1} _{5}}$, $\nu =-{\alpha _{4}}{\alpha^{-1} _{5}}$ and obtain the algebra
\[
\mathcal{C}_{28} \colon \
e_{1}\cdot e_{1}=e_{1}, \quad e_{1}\cdot e_{2}=e_{2}, \quad
e_{2}\ast e_{1}=e_{1},\quad e_{2}\ast e_{2}=e_{2}.
\]

\subsubsection[Compatible pre-Lie algebras defined on C\_\{05\}\^{}0]{Compatible pre-Lie algebras defined on $\boldsymbol{\mathcal{C}_{05}^{0}}$}

From the computation of $\mathrm{Z}_{\mathrm{CPL}}^{2}\bigl(\mathcal{C}_{05}^{0},%
\mathcal{C}_{05}^{0}\bigr)$, the compatible pre-Lie algebra structures defined on~$\mathcal{C}_{05}^{0}$ are of the following forms:
\begin{enumerate}\itemsep=0pt
\item[(1)] $e_{1}\cdot e_{1}=e_{1}$,
$e_{1}\ast e_{1}=\alpha _{1}e_{1}+\alpha _{2}e_{2}$, $e_{1}\ast
e_{2}=\alpha _{3}e_{2}$,
\item[(2)] $e_{1}\cdot e_{1}=e_{1}$, $e_{1}\ast e_{1}=\alpha _{4}e_{1}$, $e_{2}\ast e_{2}=\alpha _{5}e_{2}$,
\item[(3)] $e_{1}\cdot e_{1}=e_{1}$, $e_{1}\ast e_{1}=\alpha _{6}e_{1}+\alpha _{7}e_{2}$, $e_{1}\ast
e_{2}=\alpha _{8}e_{1}$, $e_{2}\ast e_{1}=\alpha _{8}e_{1}$, $e_{2}\ast
e_{2}=\alpha _{8}e_{2}$.
\end{enumerate}

We may assume $\alpha _{5}\alpha _{8}\neq 0$.
If the compatible pre-Lie
algebra structures defined on $\mathcal{C}_{05}^{0}$ is of the first form,
then we obtain the algebras \smash{$\mathcal{C}_{24}^{0,\beta ,\gamma }$}
and \smash{$\mathcal{C}_{25}^{0,\beta ,\gamma }$}.

Assume now that the
compatible pre-Lie algebra structures defined on $\mathcal{C}_{05}^{0}$ are
of the second form. Then, choose $\xi ={\alpha^{-1} _{5}}$ and obtain the
parametric family
\[
\mathcal{C}_{29}^{\alpha }\colon \
e_{1}\cdot e_{1}=e_{1}, \quad
e_{1}\ast e_{1}=\alpha e_{1}, \quad e_{2}\ast e_{2}=e_{2}.
\]
The algebras $\mathcal{C}_{29}^{\alpha }$ are $\mathcal{C}_{29}^{\alpha ^{\prime
}} $ are isomorphic if and only if $\alpha =\alpha ^{\prime }$.

Finally,
assume that the compatible pre-Lie algebra structures defined on $\mathcal{C}%
_{05}^{0}$ is of the third form. Then, choose $\xi ={\alpha^{-1} _{8}}$
and obtain the parametric family
\[
\mathcal{C}_{30}^{\alpha ,\beta } \colon \
e_{1}\cdot e_{1}=e_{1}, \quad
e_{1}\ast e_{1}=\alpha e_{1}+\beta e_{2}, \quad e_{1}\ast e_{2}=e_{1}, \quad
e_{2}\ast e_{1}=e_{1}, \quad e_{2}\ast e_{2}=e_{2}.
\]
The algebras \smash{$\mathcal{C}_{30}^{\alpha ,\beta }$} and \smash{$\mathcal{C}_{30}^{\alpha
^{\prime },\beta ^{\prime }}$} are isomorphic if and only if $(\alpha,\beta)= (\alpha^{\prime },\beta ^{\prime })$.

\subsubsection[Compatible pre-Lie algebras defined on C\_\{06\}\^{}\{alpha not= 0,1,2\}]{Compatible pre-Lie algebras defined on $\boldsymbol{\mathcal{C}_{06}^{\alpha \neq 0,1,2}}$}

From the computation of $\mathrm{Z}_{\mathrm{CPL}}^{2}(\mathcal{C}%
_{06}^{\alpha },\mathcal{C}_{06}^{\alpha })$, the compatible pre-Lie algebra
structures defined on~$\mathcal{C}_{06}^{\alpha }$ are of the form
$e_{1}\cdot e_{1}=e_{1}$, $e_{1}\cdot e_{2}=\alpha e_{2}$, $e_{2}\cdot
e_{1}=e_{2}$, $e_{1}\ast e_{1}=\alpha _{1}e_{1}+\alpha _{2}e_{2}$, $e_{1}\ast
e_{2}=\alpha _{3}e_{2}$, $e_{2}\ast e_{1}=\alpha _{1}e_{2}$.
Then we have the following cases:
\begin{itemize}\itemsep=0pt
\item If $\alpha _{2}\neq 0$, then choose $\xi =\alpha _{2}$ and obtain the
parametric family
\begin{gather*}
\mathcal{C}_{31}^{\alpha ,\beta ,\gamma } \colon \
e_{1}\cdot e_{1}=e_{1}, \quad e_{1}\cdot e_{2}=\alpha e_{2}, \quad e_{2}\cdot
e_{1}=e_{2}, \\
\hphantom{\mathcal{C}_{31}^{\alpha ,\beta ,\gamma } \colon} \
e_{1}\ast e_{1}=\beta e_{1}+e_{2},\quad e_{1}\ast e_{2}=\gamma e_{2},\quad
e_{2}\ast e_{1}=\beta e_{2}.
\end{gather*}
The algebras \smash{$\mathcal{C}_{31}^{\alpha ,\beta ,\gamma }$} and \smash{$\mathcal{C}%
_{31}^{\alpha ^{\prime },\beta ^{\prime },\gamma ^{\prime }}$} are isomorphic
if and only if $(\alpha, \beta, \gamma)=(\alpha ^{\prime },\beta ^{\prime },
\gamma ^{\prime })$.

\item If $\alpha _{2}=0$, we obtain the parametric family
\begin{gather*}
\mathcal{C}_{32}^{\alpha ,\beta ,\gamma } \colon \
e_{1}\cdot e_{1}=e_{1},\quad e_{1}\cdot e_{2}=\alpha e_{2},\quad e_{2}\cdot
e_{1}=e_{2}, \\
\hphantom{\mathcal{C}_{32}^{\alpha ,\beta ,\gamma } \colon} \
e_{1}\ast e_{1}=\beta e_{1},\quad e_{1}\ast e_{2}=\gamma e_{2},\quad
e_{2}\ast e_{1}=\beta e_{2}.
\end{gather*}
The algebras \smash{$\mathcal{C}_{32}^{\alpha ,\beta ,\gamma }$}, \smash{$\mathcal{C}%
_{32}^{\alpha ^{\prime },\beta ^{\prime },\gamma ^{\prime }}$} are isomorphic
if and only if $(\alpha, \beta, \gamma)=(\alpha ^{\prime },\beta ^{\prime },
\gamma ^{\prime })$.
\end{itemize}

\subsubsection[Compatible pre-Lie algebras defined on C\_\{06\}\^{}0]{Compatible pre-Lie algebras defined on $\boldsymbol{\mathcal{C}_{06}^{0}}$}

From the computation of $\mathrm{Z}_{\mathrm{CPL}}^{2}\bigl(\mathcal{C}_{06}^{0},%
\mathcal{C}_{06}^{0}\bigr)$, the compatible pre-Lie algebra structures defined on~$\mathcal{C}_{06}^{0}$ are of the following forms:
\begin{enumerate}\itemsep=0pt
\item[(1)] $e_{1}\cdot e_{1}=e_{1}$, $e_{2}\cdot e_{1}=e_{2}$,
$e_{1}\ast e_{1}=\alpha _{1}e_{1}+\alpha _{2}e_{2}$, $e_{1}\ast
e_{2}=\alpha _{3}e_{2}$, $e_{2}\ast e_{1}=\alpha _{1}e_{2}$,

\item[(2)] $e_{1}\cdot e_{1}=e_{1}$, $e_{2}\cdot e_{1}=e_{2}$,
$e_{1}\ast e_{1}=\alpha _{4}e_{1}$, $e_{1}\ast e_{2}=\alpha _{5}e_{1}$, $
e_{2}\ast e_{1}=\alpha _{4}e_{2}$, $e_{2}\ast e_{2}=\alpha _{5}e_{2}$.
\end{enumerate}

We may assume $\alpha _{5}\neq 0$.
Let us first consider the first form.
Then we have the following cases:
\begin{itemize}\itemsep=0pt
\item If $\alpha _{3}\neq 0$, then $\nu =-\frac{\alpha _{2}}{\alpha _{3}}$
choose and obtain the parametric family \smash{$\mathcal{C}_{32}^{0,\beta,\gamma \neq 0}$}.

\item If $\alpha _{3}=0$, then we consider the following two cases:
\begin{itemize}\itemsep=0pt
\item If $\alpha _{2}\neq 0$, then choose $\xi =\alpha _{2}$ and obtain the
parametric family \smash{$ {\mathcal{C}_{31}^{0,\beta, 0}}$}.
\item If $\alpha _{2}=0$, then we obtain the parametric family \smash{$\mathcal{C}%
_{32}^{0,\beta ,0}$}.
\end{itemize}
\end{itemize}

Now, we consider the second form. Then choose $\xi ={\alpha^{-1} _{5}}$, $\nu =-{\alpha _{4}}{\alpha^{-1} _{5}}$ and obtain the algebra
\[
{\mathcal{C}_{33}}\colon \
e_{1}\cdot e_{1}=e_{1}, \quad e_{2}\cdot e_{1}=e_{2}, \quad
e_{1}\ast e_{2}=e_{1}, \quad e_{2}\ast e_{2}=e_{2}.
\]

\subsubsection[Compatible pre-Lie algebras defined on C\_\{06\}\^{}1]{Compatible pre-Lie algebras defined on $\boldsymbol{\mathcal{C}_{06}^{1}}$}

From the computation of $\mathrm{Z}_{\mathrm{CPL}}^{2}\bigl(\mathcal{C}_{06}^{1},%
\mathcal{C}_{06}^{1}\bigr)$, the compatible pre-Lie algebra structures defined on~$\mathcal{C}_{06}^{1}$ are of the following forms:
\begin{enumerate}\itemsep=0pt
\item[(1)] $e_{1}\cdot e_{1}=e_{1}$, $e_{1}\cdot e_{2}=e_{2}$, $e_{2}\cdot
e_{1}=e_{2}$,
$e_{1}\ast e_{1}=\alpha _{1}e_{1}+\alpha _{2}e_{2}$, $e_{1}\ast
e_{2}=\alpha _{3}e_{2}$, $e_{2}\ast e_{1}=\alpha _{1}e_{2}$,

\item[(2)] $e_{1}\cdot e_{1}=e_{1}$, $e_{1}\cdot e_{2}=e_{2}$, $e_{2}\cdot
e_{1}=e_{2}$,
$e_{1}\ast e_{1}=\alpha _{4}e_{1}$, $e_{1}\ast e_{2}=\alpha _{4}e_{2}$, $%
e_{2}\ast e_{1}=\alpha _{4}e_{2}$, $e_{2}\ast e_{2}=\alpha
_{5}e_{1}+\alpha _{6}e_{2}$.
\end{enumerate}

We may assume $( \alpha _{5},\alpha _{6}) \neq ( 0,0) $.
If the compatible pre-Lie algebra structures defined on $\mathcal{C}%
_{06}^{1}$ is of the first form, then we obtain the algebras \smash{$\mathcal{C}%
_{31}^{1,\beta ,\gamma }$} and \smash{$\mathcal{C}_{32}^{1,\beta
,\gamma }$}.

Assume now that the compatible pre-Lie algebra structures
defined on $\mathcal{C}_{06}^{1}$ is of the second form. Then we have the
following cases:
\begin{itemize}\itemsep=0pt
\item If $\alpha _{5}\neq 0$, then choose \smash{$\xi ={\alpha^{-\frac 12} _{5}}$}
and obtain the parametric family
\begin{gather*}
\mathcal{C}_{34}^{\alpha ,\beta } \colon \
e_{1}\cdot e_{1}=e_{1},\quad e_{1}\cdot e_{2}=e_{2},\quad e_{2}\cdot
e_{1}=e_{2}, \\
\hphantom{\mathcal{C}_{34}^{\alpha ,\beta } \colon} \
e_{1}\ast e_{1}=\alpha e_{1},\quad e_{1}\ast e_{2}=\alpha e_{2},\quad
e_{2}\ast e_{1}=\alpha e_{2},\quad e_{2}\ast e_{2}=e_{1}+\beta e_{2}.
\end{gather*}
The algebras \smash{$\mathcal{C}_{34}^{\alpha ,\beta }$} and \smash{$\mathcal{C}_{34}^{\alpha
^{\prime },\beta ^{\prime }}$} are isomorphic if and only if $(\alpha,\beta)= (\alpha^{\prime },\beta ^{\prime })$.

\item If $\alpha _{5}=0$, then choose $\xi = {\alpha^{-1} _{6}}$ and
obtain the parametric family
\begin{gather*}
\mathcal{C}_{35}^{\alpha } \colon \
e_{1}\cdot e_{1}=e_{1},\quad e_{1}\cdot e_{2}=e_{2},\quad e_{2}\cdot
e_{1}=e_{2}, \\
\hphantom{\mathcal{C}_{35}^{\alpha } \colon} \
e_{1}\ast e_{1}=\alpha e_{1},\quad e_{1}\ast e_{2}=\alpha e_{2},\quad
e_{2}\ast e_{1}=\alpha e_{2},\quad e_{2}\ast e_{2}=e_{2}.
\end{gather*}
The algebras $\mathcal{C}_{35}^{\alpha }$ and $\mathcal{C}_{35}^{\alpha ^{\prime }}
$ are isomorphic if and only if $\alpha =\alpha ^{\prime }$.
\end{itemize}

\subsubsection[Compatible pre-Lie algebras defined on C\_\{06\}\^{}2]{Compatible pre-Lie algebras defined on $\boldsymbol{\mathcal{C}_{06}^{2}}$}

From the computation of $\mathrm{Z}_{\mathrm{CPL}}^{2}\bigl(\mathcal{C}_{06}^{2},%
\mathcal{C}_{06}^{2}\bigr)$, the compatible pre-Lie algebra structures defined on~$\mathcal{C}_{06}^{2}$ are of the following forms:
\begin{enumerate}\itemsep=0pt
\item[(1)] $e_{1}\cdot e_{1}=e_{1}$, $e_{1}\cdot e_{2}=2e_{2}$, $e_{2}\cdot
e_{1}=e_{2}$,
$e_{1}\ast e_{1}=\alpha _{1}e_{1}+\alpha _{2}e_{2}$, $e_{1}\ast
e_{2}=\alpha _{3}e_{2}$, $e_{2}\ast e_{1}=\alpha _{1}e_{2}$,

\item[(2)] $e_{1}\cdot e_{1}=e_{1}$, $e_{1}\cdot e_{2}=2e_{2}$, $e_{2}\cdot
e_{1}=e_{2}$, $e_{1}\ast e_{1}=\alpha _{4}e_{1}+\alpha _{5}e_{2}$, $e_{1}\ast
e_{2}=2\alpha _{4}e_{2}$, $e_{2}\ast e_{1}=\alpha _{6}e_{1}+\alpha
_{4}e_{2}$, $e_{2}\ast e_{2}=2\alpha _{6}e_{2}$,

\item[(3)] $e_{1}\cdot e_{1}=e_{1}$, $e_{1}\cdot e_{2}=2e_{2}$, $e_{2}\cdot
e_{1}=e_{2}$,
$e_{1}\ast e_{1}=\alpha _{7}e_{1}$, $e_{1}\ast e_{2}=\alpha
_{8}e_{1}+2\alpha _{7}e_{2}$, $e_{2}\ast e_{1}=2\alpha _{8}e_{1}+\alpha
_{7}e_{2}$, $e_{2}\ast e_{2}=\alpha _{8}e_{2}$.
\end{enumerate}

We may assume $\alpha _{6}\alpha _{8}\neq 0$.
If the compatible pre-Lie
algebra structures defined on $\mathcal{C}_{06}^{2}$ is of the first form,
then we obtain the algebras \smash{$\mathcal{C}_{31}^{2,\beta ,\gamma }$}
and \smash{$\mathcal{C}_{32}^{2,\beta ,\gamma }$}.

Assume now that the
compatible pre-Lie algebra structures defined on $\mathcal{C}_{06}^{2}$ are
of the second form. Then, choose $\xi ={\alpha^{-1} _{6}}$ and obtain the
parametric family
\begin{gather*}
\mathcal{C}_{36}^{\alpha ,\beta } \colon \
e_{1}\cdot e_{1}=e_{1}, \quad e_{1}\cdot e_{2}=2e_{2}, \quad e_{2}\cdot
e_{1}=e_{2}, \quad e_{1}\ast e_{1}=\alpha e_{1}+\beta e_{2}, \\
\hphantom{\mathcal{C}_{36}^{\alpha ,\beta } \colon} \
 e_{1}\ast e_{2}=2\alpha e_{2}, \quad e_{2}\ast e_{1}=e_{1}+\alpha e_{2}, \quad e_{2}\ast e_{2}=2e_{2}.
\end{gather*}
The algebras \smash{$\mathcal{C}_{36}^{\alpha ,\beta }$} and \smash{$\mathcal{C}_{36}^{\alpha
^{\prime },\beta ^{\prime }}$} are isomorphic if and only if $(\alpha,\beta)= (\alpha^{\prime },\beta ^{\prime })$.

Finally, assume that the compatible
pre-Lie algebra structures defined on $\mathcal{C}_{06}^{2}$ are of the third
form. Then, choose $\xi = {\alpha^{-1} _{8}}$ and obtain the parametric
family
\begin{gather*}
\mathcal{C}_{37}^{\alpha } \colon \
e_{1}\cdot e_{1}=e_{1}, \quad e_{1}\cdot e_{2}=2e_{2}, \quad e_{2}\cdot
e_{1}=e_{2}, \\
\hphantom{\mathcal{C}_{37}^{\alpha } \colon} \
e_{1}\ast e_{1}=\alpha e_{1}, \quad e_{1}\ast e_{2}=e_{1}+2\alpha e_{2}, \quad
e_{2}\ast e_{1}=2e_{1}+\alpha e_{2}, \quad e_{2}\ast e_{2}=e_{2}.
\end{gather*}
The algebras $\mathcal{C}_{37}^{\alpha }$ and $\mathcal{C}_{37}^{\alpha ^{\prime }}
$ are isomorphic if and only if $\alpha =\alpha ^{\prime }$.

\subsubsection[Compatible pre-Lie algebras defined on C\_\{07\}]{Compatible pre-Lie algebras defined on $\boldsymbol{\mathcal{C}_{07}}$}

From the computation of $\mathrm{Z}_{\mathrm{CPL}}^{2}(\mathcal{C}_{07},
\mathcal{C}_{07})$, the compatible pre-Lie algebra structures defined on~$\mathcal{C}_{07}$ are of the following forms:
\begin{enumerate}\itemsep=0pt
\item[(1)] $e_{1}\cdot e_{1}=e_{1}$, $e_{2}\cdot e_{2}=e_{2}$,
$e_{1}\ast e_{1}=( \alpha _{3}+\alpha _{2}-\alpha _{1})
e_{1}-\alpha _{2}e_{2}$, $e_{1}\ast e_{2}=\alpha _{1}e_{1}+\alpha
_{2}e_{2}$, $e_{2}\ast e_{1}=\alpha _{1}e_{1}+\alpha _{2}e_{2}$, $e_{2}\ast e_{2}=-\alpha _{1}e_{1}+\alpha _{3}e_{2}$,
\item[(2)] $e_{1}\cdot e_{1}=e_{1}$, $e_{2}\cdot e_{2}=e_{2}$,
$e_{1}\ast e_{1}=\alpha _{4}e_{1}$, $e_{2}\ast e_{2}=\alpha _{5}e_{2}$.
\end{enumerate}

We may assume that $\alpha _{4}\neq \alpha _{5}$.
Suppose first that the
compatible pre-Lie algebra structures defined on $\mathcal{C}_{07}$ has the
first form. Then we obtain the algebras
\begin{gather*}
\mathcal{C}_{38}^{\alpha ,\beta ,\gamma } \colon \
e_{1}\cdot e_{1}=e_{1},\quad e_{2}\cdot e_{2}=e_{2}, \quad
e_{1}\ast e_{1}=( \gamma +\beta -\alpha ) e_{1}-\beta e_{2},\\
\hphantom{\mathcal{C}_{38}^{\alpha ,\beta ,\gamma } \colon} \
e_{1}\ast e_{2}=\alpha e_{1}+\beta e_{2}, \quad
e_{2}\ast e_{1}=\alpha
e_{1}+\beta e_{2},\quad e_{2}\ast e_{2}=-\alpha e_{1}+\gamma e_{2}.
\end{gather*}
The algebras \smash{$\mathcal{C}_{38}^{\alpha ,\beta ,\gamma }$} and \smash{$\mathcal{C}%
_{38}^{\alpha ^{\prime },\beta ^{\prime },\gamma ^{\prime }}$} are isomorphic
if and only if
$
(\alpha, \beta, \gamma)=\bigl(\alpha ^{\prime },\beta ^{\prime },
\gamma ^{\prime }\bigr)$ or $(\alpha, \beta, \gamma)=\bigl(\beta ^{\prime },\alpha ^{\prime
}, -\alpha ^{\prime }+\beta ^{\prime }+\gamma ^{\prime }\bigr)$.

Now, assume that the compatible pre-Lie algebra structures defined on $\mathcal{C%
}_{07}$ has the second form. Then we obtain the algebras
\begin{gather*}
\mathcal{C}_{39}^{\alpha ,\beta \neq \alpha } \colon \
e_{1}\cdot e_{1}=e_{1}, \quad e_{2}\cdot e_{2}=e_{2}, \quad
e_{1}\ast e_{1}=\alpha e_{1}, \quad e_{2}\ast e_{2}=\beta e_{2}.
\end{gather*}
The algebras \smash{$\mathcal{C}_{39}^{\alpha ,\beta }$} and \smash{$\mathcal{C}_{39}^{\alpha
^{\prime },\beta ^{\prime }}$} are isomorphic if and only if
$(\alpha,\beta)= \bigl(\alpha^{\prime },\beta ^{\prime }\bigr)$ or
$(\alpha,\beta)= \bigl(\beta^{\prime },\alpha ^{\prime }\bigr)$.

\subsubsection[Compatible pre-Lie algebras defined on C\_\{08\}]{Compatible pre-Lie algebras defined on $\boldsymbol{\mathcal{C}_{08}}$}

From the computation of $\mathrm{Z}_{\mathrm{CPL}}^{2}(\mathcal{C}_{08},%
\mathcal{C}_{08})$, the compatible pre-Lie algebra structures defined on~$%
\mathcal{C}_{08}$ are of the following forms:
\begin{enumerate}\itemsep=0pt
\item[(1)] $e_{1}\cdot e_{1}=e_{1}$, $e_{1}\cdot e_{2}=2e_{2}$, $e_{2}\cdot e_{1}=%
\frac{1}{2}e_{1}+e_{2}$, $e_{2}\cdot e_{2}=e_{2}$,
$e_{1}\ast e_{1}=\alpha _{1}e_{1}+\alpha _{2}e_{2}$, $e_{1}\ast
e_{2}=2\alpha _{1}e_{2}$, $e_{2}\ast e_{1}=\alpha _{3}e_{1}+\alpha
_{1}e_{2}$, $e_{2}\ast e_{2}=2\alpha _{3}e_{2}$,

\item[(2)] $e_{1}\cdot e_{1}=e_{1}$, $e_{1}\cdot e_{2}=2e_{2}$, $e_{2}\cdot e_{1}=
\frac{1}{2}e_{1}+e_{2}$, $e_{2}\cdot e_{2}=e_{2}$,
$e_{1}\ast e_{1}=\alpha _{4}e_{1}$, $e_{1}\ast e_{2}=\alpha
_{5}e_{1}+2\alpha _{4}e_{2}$,
$e_{2}\ast e_{1}=\bigl( 2\alpha _{5}+\frac{1%
}{2}\alpha _{4}\bigr) e_{1}+\alpha _{4}e_{2}$, $e_{2}\ast e_{2}=\frac{1}{2
}\alpha _{5}e_{1}+( \alpha _{4}+\alpha _{5}) e_{2}$.
\end{enumerate}

We may assume $\alpha _{5}\neq 0$. Then we obtain the following compatible
pre-Lie algebras:
\begin{gather*}
\mathcal{C}_{40}^{\alpha ,\beta ,\gamma } \colon \
e_{1}\cdot e_{1}=e_{1}, \quad e_{1}\cdot e_{2}=2e_{2} , \quad e_{2}\cdot e_{1}=%
\tfrac{1}{2}e_{1}+e_{2},\quad e_{2}\cdot e_{2}=e_{2} , \\
\hphantom{\mathcal{C}_{40}^{\alpha ,\beta ,\gamma } \colon} \
e_{1}\ast e_{1}=\alpha e_{1}+\beta e_{2} , \quad e_{1}\ast e_{2}=2\alpha e_{2},
 \quad e_{2}\ast e_{1}=\gamma e_{1}+\alpha e_{2} , \quad e_{2}\ast e_{2}=2\gamma
e_{2},\\
\mathcal{C}_{41}^{\alpha ,\beta \neq 0} \colon \
 e_{1}\cdot e_{1}=e_{1} , \quad e_{1}\cdot e_{2}=2e_{2} , \quad e_{2}\cdot e_{1}=%
\tfrac{1}{2}e_{1}+e_{2} , \quad e_{2}\cdot e_{2}=e_{2} , \\
\hphantom{\mathcal{C}_{41}^{\alpha ,\beta \neq 0} \colon} \
 e_{1}\ast e_{1}=\alpha e_{1} ,\quad
 e_{1}\ast e_{2}=\beta e_{1}+2\alpha e_{2},
 \\
\hphantom{\mathcal{C}_{41}^{\alpha ,\beta \neq 0} \colon} \
 e_{2}\ast e_{1}=\bigl( 2\beta +\tfrac{1}{2}\alpha \bigr) e_{1}+\alpha
e_{2} , \quad e_{2}\ast e_{2}=\tfrac{1}{2}\beta e_{1}+( \alpha +\beta
) e_{2}.
\end{gather*}
Moreover, the algebras \smash{$\mathcal{C}_{40}^{\alpha ,\beta ,\gamma }$} and \smash{$\mathcal{C}%
_{40}^{\alpha ^{\prime },\beta ^{\prime },\gamma ^{\prime }}$} are isomorphic
if and only if
$\bigl( \alpha ^{\prime },\beta ^{\prime },\gamma ^{\prime
}\bigr) =( \alpha ,\beta ,\gamma )$ or~${\bigl( \alpha ^{\prime
},\beta ^{\prime },\gamma ^{\prime }\bigr) =( 4\gamma -\alpha ,\beta
-8\alpha +16\gamma ,\gamma )}$.
Also, the algebras \smash{$\mathcal{C}%
_{41}^{\alpha ,\beta }$} and \smash{$\mathcal{C}_{41}^{\alpha ^{\prime },\beta ^{\prime }}$}
are isomorphic if and only if
$
\bigl( \alpha ^{\prime },\beta ^{\prime
}\bigr) =( \alpha ,\beta ) $ or $\bigl( \alpha ^{\prime },\beta
^{\prime }\bigr) =( \alpha +4\beta ,-\beta )$.

\subsection[The algebraic classification of compatible commutative associative algebras]{The algebraic classification of compatible commutative\\ associative algebras}
It is easy to see that each commutative pre-Lie algebra is associative.
Hence, we can choose only commutative compatible pre-Lie algebras from Theorem \ref{compre2}.

\begin{Theorem}\label{comasscom2}
Let $\mathcal{C}$ be a nonzero $2$-dimensional compatible commutative associative algebra.
Then $\mathcal{C}$ is isomorphic to one and only one and only one of the following
algebras:
\begin{gather*}
 \mathcal{C}_{03} \colon\ e_{1}\ast e_{1} =e_{2}, \\
 \mathcal{C}_{05}^{0} \colon\ e_{1}\ast e_{1} =e_{1}, \\
 \mathcal{C}_{06}^{1} \colon\ e_{1} \ast e_{1} =e_{1}, \quad e_{1} \ast e_{2} = e_{2}, \quad e_{2} \ast e_{1} =e_{2}, \\
 \mathcal{C}_{07} \colon\ e_{1}\ast e_{1} =e_{1},\quad e_{2}\ast e_{2}=e_{2}, \\
 \mathcal{C}_{13}^{1 } \colon\
 e_{1}\cdot e_{1}=e_{2} , \quad
 e_{1}\ast e_{1}= e_{1} , \quad e_{1}\ast e_{2}=e_{2} , \quad e_{2}\ast
e_{1}= e_{2},
\\
 \mathcal{C}_{15}^{\alpha } \colon\
 e_{1}\cdot e_{1}=e_{2} , \quad
 e_{1}\ast e_{1}=\alpha e_{2}, \\
 \mathcal{C}_{16}^{0 } \colon\
 e_{1}\cdot e_{1}=e_{2} , \quad
 e_{1}\ast e_{1}=e_{1},
\\
 \mathcal{C}_{18}^{\alpha } \colon\
 e_{1}\cdot e_{1}=e_{2} , \quad
 e_{1}\ast e_{1}=\alpha e_{2} , \quad e_{1}\ast e_{2}=e_{1} , \quad e_{2}\ast
e_{1}=e_{1} , \quad e_{2}\ast e_{2}=e_{2},
\\
 \mathcal{C}_{24}^{0 ,\beta ,0 } \colon\
 e_{1}\cdot e_{1}=e_{1} , \quad
 e_{1}\ast e_{1}=\beta e_{1}+e_{2},\\
 \mathcal{C}_{25}^{0 ,\beta ,0 } \colon\
 e_{1}\cdot e_{1}=e_{1} , \quad
 e_{1}\ast e_{1}=\beta e_{1},
\\
 \mathcal{C}_{29}^{\alpha } \colon\
 e_{1}\cdot e_{1}=e_{1} , \quad
 e_{1}\ast e_{1}=\alpha e_{1} , \quad e_{2}\ast e_{2}=e_{2},
 \\
 \mathcal{C}_{30}^{\alpha ,\beta } \colon\
 e_{1}\cdot e_{1}=e_{1} , \quad
 e_{1}\ast e_{1}=\alpha e_{1}+\beta e_{2} , \quad e_{1}\ast e_{2}=e_{1} , \quad
e_{2}\ast e_{1}=e_{1} , \quad e_{2}\ast e_{2}=e_{2},
\\
 \mathcal{C}_{31}^{1 ,\beta ,\beta } \colon\
 e_{1}\cdot e_{1}=e_{1} , \quad e_{1}\cdot e_{2}= e_{2} , \quad e_{2}\cdot
e_{1}=e_{2} , \\
\hphantom{\mathcal{C}_{31}^{1 ,\beta ,\beta } \colon} \
 e_{1}\ast e_{1}=\beta e_{1}+e_{2} ,\quad e_{1}\ast e_{2}=\beta e_{2} , \quad
e_{2}\ast e_{1}=\beta e_{2},
\\
 \mathcal{C}_{32}^{1 ,\beta ,\beta } \colon\
 e_{1}\cdot e_{1}=e_{1} , \quad e_{1}\cdot e_{2}= e_{2} , \quad e_{2}\cdot
e_{1}=e_{2} , \\
\hphantom{\mathcal{C}_{32}^{1 ,\beta ,\beta } \colon} \
 e_{1}\ast e_{1}=\beta e_{1} , \quad e_{1}\ast e_{2}=\beta e_{2} , \quad
e_{2}\ast e_{1}=\beta e_{2},
 \\
 \mathcal{C}_{34}^{\alpha ,\beta } \colon\
 e_{1}\cdot e_{1}=e_{1} , \quad e_{1}\cdot e_{2}=e_{2} , \quad e_{2}\cdot
e_{1}=e_{2} , \\
\hphantom{\mathcal{C}_{34}^{\alpha ,\beta } \colon} \
 e_{1}\ast e_{1}=\alpha e_{1} , \quad e_{1}\ast e_{2}=\alpha e_{2} , \quad
e_{2}\ast e_{1}=\alpha e_{2} , \quad e_{2}\ast e_{2}=e_{1}+\beta e_{2},
 \\
 \mathcal{C}_{35}^{\alpha } \colon\
 e_{1}\cdot e_{1}=e_{1} , \quad e_{1}\cdot e_{2}=e_{2} , \quad e_{2}\cdot
e_{1}=e_{2} , \\
\hphantom{\mathcal{C}_{35}^{\alpha } \colon} \
 e_{1}\ast e_{1}=\alpha e_{1} , \quad e_{1}\ast e_{2}=\alpha e_{2} , \quad
e_{2}\ast e_{1}=\alpha e_{2} , \quad e_{2}\ast e_{2}=e_{2},
 \\
 \mathcal{C}_{38}^{\alpha ,\beta ,\gamma } \colon\
 e_{1}\cdot e_{1}=e_{1} , \quad e_{2}\cdot e_{2}=e_{2} , \quad
 e_{1}\ast e_{1}=( \gamma +\beta -\alpha ) e_{1}-\beta e_{2} ,\\
 \hphantom{\mathcal{C}_{38}^{\alpha ,\beta ,\gamma } \colon} \
e_{1}\ast e_{2}=\alpha e_{1}+\beta e_{2} , \quad
 e_{2}\ast e_{1}=\alpha
e_{1}+\beta e_{2} , \quad e_{2}\ast e_{2}=-\alpha e_{1}+\gamma e_{2},
 \\
 \mathcal{C}_{39}^{\alpha ,\beta \neq \alpha } \colon\
 e_{1}\cdot e_{1}=e_{1} ,\quad e_{2}\cdot e_{2}=e_{2} , \quad
 e_{1}\ast e_{1}=\alpha e_{1} , \quad e_{2}\ast e_{2}=\beta e_{2}.
\end{gather*}

All algebras are non-isomorphic, except
\smash{$\mathcal{C}_{38}^{\alpha ,\beta ,\gamma } \cong \mathcal{C}_{38}^{\beta ,\alpha , -\alpha +\beta +\gamma }$},
\smash{$\mathcal{C}_{39}^{\alpha ,\beta } \cong \mathcal{C}_{39}^{\beta,\alpha}$}.
\end{Theorem}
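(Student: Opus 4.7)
The plan is to leverage Theorem \ref{compre2} directly, together with the standard observation (already noted in the excerpt) that a commutative pre-Lie algebra is automatically associative. Under this observation, a compatible commutative associative structure on a 2-dimensional space is exactly a compatible pre-Lie structure $(\mathcal{C},\cdot,\ast)$ in which both operations $\cdot$ and $\ast$ are commutative. So the proof reduces to filtering the full list in Theorem \ref{compre2} by the conditions $x\cdot y = y\cdot x$ and $x\ast y = y\ast x$, specialised to the basis $\{e_1,e_2\}$.

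The first step is to go through each family in Theorem \ref{compre2} and impose commutativity on the underlying pre-Lie structure $(\mathcal{C},\cdot)$. This immediately eliminates the base algebras $\mathcal{C}_{01},\mathcal{C}_{02},\mathcal{C}_{04},\mathcal{C}_{08}$, restricts $\mathcal{C}_{05}^{\alpha}$ to $\alpha=0$, restricts $\mathcal{C}_{06}^{\alpha}$ to $\alpha=1$, and leaves $\mathcal{C}_{03}$ and $\mathcal{C}_{07}$ intact. Consequently, the only compatible pre-Lie families that can contribute are those built on $\mathcal{C}_{03}$, $\mathcal{C}_{05}^{0}$, $\mathcal{C}_{06}^{1}$ and $\mathcal{C}_{07}$, namely the families $\mathcal{C}_{13}$--$\mathcal{C}_{19}$, $\mathcal{C}_{24}$--$\mathcal{C}_{25}$ (at $\alpha=0$), $\mathcal{C}_{29}$--$\mathcal{C}_{30}$, $\mathcal{C}_{31}$--$\mathcal{C}_{35}$ (at $\alpha=1$), and $\mathcal{C}_{38}$--$\mathcal{C}_{39}$.

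The second step is to impose commutativity on the second operation $\ast$ within each of the surviving families. This is purely mechanical: for each multiplication table one compares the structure constants for $e_i\ast e_j$ and $e_j\ast e_i$ and records the resulting constraints on the parameters. For example, $\mathcal{C}_{24}^{0,\beta,\gamma}$ survives only when $\gamma=0$ (giving $\mathcal{C}_{24}^{0,\beta,0}$), $\mathcal{C}_{31}^{1,\beta,\gamma}$ survives only when $\gamma=\beta$ (giving $\mathcal{C}_{31}^{1,\beta,\beta}$), and similarly for $\mathcal{C}_{25}$ and $\mathcal{C}_{32}$; families such as $\mathcal{C}_{14}, \mathcal{C}_{17}, \mathcal{C}_{19}, \mathcal{C}_{26}, \mathcal{C}_{27}, \mathcal{C}_{33}, \mathcal{C}_{36}, \mathcal{C}_{37}$ get killed outright because the non-commutativity is built into their defining multiplications. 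The algebras $\mathcal{C}_{13}^1, \mathcal{C}_{15}^\alpha, \mathcal{C}_{16}^0, \mathcal{C}_{18}^\alpha, \mathcal{C}_{29}^\alpha, \mathcal{C}_{30}^{\alpha,\beta}, \mathcal{C}_{34}^{\alpha,\beta}, \mathcal{C}_{35}^\alpha, \mathcal{C}_{38}^{\alpha,\beta,\gamma}, \mathcal{C}_{39}^{\alpha,\beta\neq\alpha}$ are already commutative in $\ast$, so they pass through untouched.

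Finally, the isomorphism statement is inherited. Since a compatible commutative associative algebra is a compatible pre-Lie algebra with extra symmetry, any isomorphism between two commutative representatives is in particular an isomorphism of compatible pre-Lie algebras, so the equivalences among the surviving representatives are precisely those listed in Theorem \ref{compre2}, intersected with the commutative sublist: one retains exactly $\mathcal{C}_{38}^{\alpha,\beta,\gamma}\cong\mathcal{C}_{38}^{\beta,\alpha,-\alpha+\beta+\gamma}$ and $\mathcal{C}_{39}^{\alpha,\beta}\cong\mathcal{C}_{39}^{\beta,\alpha}$, while the equivalences $\mathcal{C}_{24}^{1,\beta,\gamma}\cong\mathcal{C}_{25}^{1,\beta,\gamma}$ and $\mathcal{C}_{31}^{0,\beta,\gamma}\cong\mathcal{C}_{32}^{0,\beta,\gamma}$ do not appear since they involve parameter values (namely $\alpha=1$ for $\mathcal{C}_{24}/\mathcal{C}_{25}$ and $\alpha=0$ for $\mathcal{C}_{31}/\mathcal{C}_{32}$) that are excluded by the commutativity constraints of the base algebra. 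The only non-routine aspect is bookkeeping — making sure no parametric subfamily is accidentally identified with a named algebra elsewhere in the list — but this is just an inspection of the multiplication tables.
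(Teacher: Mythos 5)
Your proposal is correct and takes essentially the same route as the paper, which disposes of this theorem in one line: since a commutative pre-Lie algebra is automatically associative (and conversely), the compatible commutative associative algebras are exactly the compatible pre-Lie algebras of Theorem~\ref{compre2} in which both products are commutative, so one simply filters that list. Your case-by-case filtering of the families, the resulting parameter restrictions, and the inheritance of the isomorphism conditions all match the paper's intended argument.
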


\subsection{The algebraic classification of compatible associative algebras}

\begin{Definition}
A compatible associative algebra is a vector space ${\bf A}$ equipped with
two multiplications: $\cdot$ and another multiplication $\ast$,
such that, $({\bf A}, \cdot)$, $({\bf A}, \ast)$ and
$({\bf A}, \cdot+ \ast)$ are associative algebras.
These two operations are required to satisfy the following identities:
\begin{gather*}
( x\cdot y) \cdot z=x\cdot ( y\cdot
z),\quad
( x\ast y) \ast z=x\ast ( y\ast z),\\
(x\ast y)\cdot z+(x\cdot y)\ast z=
x\ast (y\cdot z)+x\cdot (y\ast z).
 \end{gather*}
 \end{Definition}

\begin{Theorem}
\label{comass2}
Let $\mathcal{C}$ be a nonzero $2$-dimensional compatible associative algebra.
Then $\mathcal{C}$ is isomorphic to one and only one compatible commutative associative algebra listed in Theorem~{\rm\ref{comasscom2}} or one of the following algebras:
\begin{gather*}
\mathcal{C}_{05}^{1}\colon\ e_{1}\ast e_{1} =e_{1},\quad e_{1}\ast e_{2} = e_{2},\\
\mathcal{C}_{06}^{0}\colon\ e_{1} \ast e_{1} =e_{1}, \quad e_{2} \ast e_{1} =e_{2},\\
\mathcal{C}_{25}^{1 ,\beta , \beta }\colon\
e_{1}\cdot e_{1}=e_{1}, \quad e_{1}\cdot e_{2}= e_{2}, \quad
e_{1}\ast e_{1}=\beta e_{1}, \quad e_{1}\ast e_{2}=\beta e_{2},
\\
\mathcal{C}_{28}\colon\
e_{1}\cdot e_{1}=e_{1}, \quad e_{1}\cdot e_{2}=e_{2}, \quad
e_{2}\ast e_{1}=e_{1}, \quad e_{2}\ast e_{2}=e_{2},
\\
\mathcal{C}_{32}^{0 ,\beta ,0 }\colon\
e_{1}\cdot e_{1}=e_{1}, \quad e_{2}\cdot
e_{1}=e_{2}, \quad
e_{1}\ast e_{1}=\beta e_{1}, \quad
e_{2}\ast e_{1}=\beta e_{2},
\\
\mathcal{C}_{33}\colon\
e_{1}\cdot e_{1}=e_{1}, \quad e_{2}\cdot e_{1}=e_{2}, \quad
e_{1}\ast e_{2}=e_{1}, \quad e_{2}\ast e_{2}=e_{2}.
\end{gather*}
\end{Theorem}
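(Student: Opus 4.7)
The plan is to exploit the fact that associative algebras form a subvariety of pre-Lie algebras: every associative algebra satisfies the right-symmetric identity automatically. Consequently, every compatible associative algebra is, in particular, a compatible pre-Lie algebra, and two such structures are isomorphic as compatible associative algebras if and only if they are isomorphic as compatible pre-Lie algebras (the underlying notion of isomorphism is the same bijective linear map preserving both operations). Therefore the classification is obtained by extracting from Theorem \ref{compre2} precisely those items in which \emph{both} multiplications $\cdot$ and $\ast$ happen to be associative, and then separating the commutative specimens (already covered by Theorem \ref{comasscom2}) from the genuinely non-commutative ones.

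First I would determine which of the eight underlying 2-dimensional pre-Lie algebras are associative by direct verification of $(x\cdot y)\cdot z=x\cdot(y\cdot z)$ on basis triples. A short calculation rules out $\mathcal{C}_{01},\mathcal{C}_{02},\mathcal{C}_{04},\mathcal{C}_{08}$ (each fails associativity on an explicit triple, e.g., $(e_1,e_1,e_1)$ for $\mathcal{C}_{01}$) and confirms that $\mathcal{C}_{03},\mathcal{C}_{05}^{\alpha},\mathcal{C}_{06}^{\alpha},\mathcal{C}_{07}$ are associative. Hence attention narrows to the compatible pre-Lie families of Theorem \ref{compre2} whose first operation is one of these four.

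Next, for each such family I would impose associativity of $\ast$ by substituting the prescribed basis products into $(x\ast y)\ast z=x\ast(y\ast z)$ and reading off the resulting polynomial constraints on the parameters. This selects the compatible associative specializations. Note that compatibility in the associative sense is then automatic: the pre-Lie compatibility identity together with individual associativity of $\cdot$ and $\ast$ forces $(x\ast y)\cdot z+(x\cdot y)\ast z=x\ast(y\cdot z)+x\cdot(y\ast z)$, and therefore $({\bf A},\cdot+\ast)$ is associative. Among the survivors, those for which $\ast$ is also commutative are exactly the algebras listed in Theorem \ref{comasscom2}, and the remainder yields precisely the six families $\mathcal{C}_{05}^{1},\mathcal{C}_{06}^{0},\mathcal{C}_{25}^{1,\beta,\beta},\mathcal{C}_{28},\mathcal{C}_{32}^{0,\beta,0},\mathcal{C}_{33}$ appearing in the statement.

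Pairwise non-isomorphism is inherited from Theorem \ref{compre2}: since all listed survivors are already pairwise non-isomorphic as compatible pre-Lie algebras, they remain so as compatible associative algebras. The main obstacle is purely bookkeeping: the associativity screening must be performed across every parametric family in Theorem \ref{compre2}, with careful attention paid to the boundary parameter values where the underlying pre-Lie algebra has a larger automorphism group (notably $\alpha\in\{0,\tfrac12,1\}$ for $\mathcal{C}_{05}^{\alpha}$ and $\alpha\in\{0,1,2\}$ for $\mathcal{C}_{06}^{\alpha}$), since these are the places where distinct parameter branches could collapse and where spurious or missing cases are most likely to arise.
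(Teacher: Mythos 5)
Your overall strategy --- viewing compatible associative algebras as the subclass of compatible pre-Lie algebras cut out by extra associativity constraints, and screening the list of Theorem \ref{compre2} accordingly --- is the route the paper (implicitly) takes as well. However, your screening criterion fails at a crucial point. You claim that once $\cdot$ and $\ast$ are individually associative, the pre-Lie compatibility identity forces the mixed identity $(x\ast y)\cdot z+(x\cdot y)\ast z=x\ast(y\cdot z)+x\cdot(y\ast z)$. It does not: the pre-Lie compatibility identity only asserts that the mixed associator $A(x,y,z)=(x\ast y)\cdot z+(x\cdot y)\ast z-x\ast(y\cdot z)-x\cdot(y\ast z)$ is \emph{symmetric} in $x$ and $y$, not that it vanishes --- exactly as, for a single product, being pre-Lie is strictly weaker than being associative. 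A concrete counterexample sits inside the very list you are screening: the compatible pre-Lie algebra $\mathcal{C}_{17}^{0}$, with $e_1\cdot e_1=e_2$, $e_1\ast e_1=e_1$, $e_1\ast e_2=e_2$, has both products associative ($\cdot$ is $\mathcal{C}_{03}$ and $\ast$ is a copy of $\mathcal{C}_{05}^{1}$), yet $A(e_1,e_1,e_1)=e_2+0-e_2-e_2=-e_2\neq 0$, and indeed $\cdot+\ast$ is the non-associative pre-Lie algebra $\mathcal{C}_{02}$. Your procedure would therefore wrongly admit this algebra, which is correctly absent from Theorem \ref{comass2}. The repair is to impose, on every family, the additional polynomial constraint $A=0$ (equivalently, associativity of $\cdot+\ast$); note that this shortcut \emph{is} legitimate in the commutative case of Theorem \ref{comasscom2}, because there $\cdot+\ast$ is a commutative pre-Lie algebra and hence automatically associative, which may be the source of the confusion.

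A secondary inaccuracy: $\mathcal{C}_{05}^{\alpha}$ and $\mathcal{C}_{06}^{\alpha}$ are associative only for $\alpha\in\{0,1\}$ (the triple $(e_1,e_1,e_2)$ forces $\alpha=\alpha^{2}$), not for all $\alpha$ as your second paragraph asserts; this restriction is essential to arrive at the stated list, which contains only $\mathcal{C}_{05}^{0}$, $\mathcal{C}_{05}^{1}$, $\mathcal{C}_{06}^{0}$, $\mathcal{C}_{06}^{1}$ among these families. The remaining points --- that the isomorphism notion for compatible associative algebras coincides with that for the underlying compatible pre-Lie algebras, so pairwise non-isomorphism is inherited from Theorem \ref{compre2} --- are sound.
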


\subsection{The algebraic classification of compatible Novikov algebras}

\begin{Definition}
A compatible Novikov algebra is a vector space ${\bf A}$ equipped with
two multiplications: $\cdot$ and another multiplication $\ast$,
such that, $({\bf A}, \cdot)$, $({\bf A}, \ast)$ and
$({\bf A}, \cdot+ \ast)$ are Novikov algebras.
These two operations are required to satisfy the following identities:
\begin{gather*}
 ( x\cdot y) \cdot z-x\cdot ( y\cdot
z) = ( y\cdot x) \cdot z -y\cdot ( x\cdot z) ,\quad
 (x \cdot y) \cdot z = (x \cdot z)\cdot y, \\
 ( x\ast y) \ast z-x\ast ( y\ast
z) = ( y\ast x) \ast z -y\ast ( x\ast z) ,\quad
 (x \ast y) \ast z = (x \ast z)\ast y, \\
(x\ast y)\cdot z-x\ast (y\cdot z)+(x\cdot y)\ast z-x\cdot (y\ast z) \\
\qquad =
(y\ast x)\cdot z-y\ast (x\cdot z)+(y\cdot x)\ast z-y\cdot (x\ast z) ,\\
 (x \ast y) \cdot z +(x \cdot y) \ast z = (x \ast z)\cdot y+(x \cdot z)\ast y.
 \end{gather*}
 \end{Definition}

\begin{Theorem}\label{comnov2}
Let $\mathcal{C}$ be a nonzero $2$-dimensional compatible Novikov algebra.
Then $\mathcal{C}$ is isomorphic to one and only one compatible commutative associative algebra listed in Theorem~{\rm\ref{comasscom2}} or one of the following algebras:
\begin{gather*}
 \mathcal{C}_{01} \colon\ e_{1} \ast e_{1} = e_{1} + e_{2},\quad e_{2} \ast e_{1} =e_{2}, \\
 \mathcal{C}_{04} \colon\ e_{2} \ast e_{1} =e_{1}, \\
 \mathcal{C}_{06}^{\alpha\neq 1} \colon\ e_{1} \ast e_{1} =e_{1}, \quad e_{1} \ast e_{2} = \alpha e_{2}, \quad e_{2} \ast e_{1} =e_{2}, \\
 \mathcal{C}_{09}^{\alpha ,\beta \neq 0} \colon\
 e_{1}\cdot e_{1}=e_{1}+e_{2} , \quad e_{2}\cdot e_{1}=e_{2} , \quad
 e_{1}\ast e_{1}=\alpha e_{1} , \quad e_{1}\ast e_{2}=\beta e_{2} , e_{2}\ast e_{1}=\alpha e_{2},
\\
 \mathcal{C}_{10}^{\alpha ,\beta } \colon\
 e_{1}\cdot e_{1}=e_{1}+e_{2} ,\quad e_{2}\cdot e_{1}=e_{2} , \quad
 e_{1}\ast e_{1}=\alpha e_{1}+\beta e_{2} , \quad e_{2}\ast e_{1}=\alpha e_{2},
\\
 \mathcal{C}_{13}^{\alpha\neq1 } \colon\
 e_{1}\cdot e_{1}=e_{2} , \quad
 e_{1}\ast e_{1}=\alpha e_{1} , \quad e_{1}\ast e_{2}=e_{2} , \quad e_{2}\ast
e_{1}=\alpha e_{2},
\\
 \mathcal{C}_{14}^{\alpha } \colon\
 e_{1}\cdot e_{1}=e_{2} , \quad
 e_{1}\ast e_{1}=e_{1}+\alpha e_{2} ,\quad e_{2}\ast e_{1}=e_{2},
\\
 \mathcal{C}_{20}^{\alpha ,0 } \colon\
 e_{2}\cdot e_{1}=e_{1} , \quad
 e_{2}\ast e_{1}=\alpha e_{1} , \quad e_{2}\ast e_{2}=e_{1},
\\
 \mathcal{C}_{21}^{\alpha ,0 } \colon\
 e_{2}\cdot e_{1}=e_{1} , \quad
 e_{2}\ast e_{1}=\alpha e_{1},
\\
 \mathcal{C}_{22}^{\alpha \neq 0,\beta } \colon\
 e_{2}\cdot e_{1}=e_{1} , \quad
 e_{1}\ast e_{2}=\alpha e_{1} , \quad e_{2}\ast e_{1}=\beta e_{1} , \quad
e_{2}\ast e_{2}=e_{1}+\alpha e_{2},
\\
 \mathcal{C}_{23}^{\alpha \neq 0,\beta } \colon\
 e_{2}\cdot e_{1}=e_{1} , \quad
 e_{1}\ast e_{2}=\alpha e_{1} , \quad e_{2}\ast e_{1}=\beta e_{1} ,\quad
e_{2}\ast e_{2}=\alpha e_{2},
\\
 \mathcal{C}_{31}^{(\alpha ,\beta ,\gamma) \neq (1,\beta,\beta) } \colon\
 e_{1}\cdot e_{1}=e_{1} , \quad e_{1}\cdot e_{2}=\alpha e_{2} , \quad e_{2}\cdot
e_{1}=e_{2} , \\
\hphantom{\mathcal{C}_{31}^{(\alpha ,\beta ,\gamma) \neq (1,\beta,\beta) } \colon} \
 e_{1}\ast e_{1}=\beta e_{1}+e_{2} , \quad e_{1}\ast e_{2}=\gamma e_{2} , \quad
e_{2}\ast e_{1}=\beta e_{2},
\\
 \mathcal{C}_{32}^{(\alpha ,\beta ,\gamma) \neq (1,\beta,\beta) } \colon\
e_{1}\cdot e_{1}=e_{1} , \quad e_{1}\cdot e_{2}=\alpha e_{2} , \quad e_{2}\cdot
e_{1}=e_{2} , \\
\hphantom{\mathcal{C}_{32}^{(\alpha ,\beta ,\gamma) \neq (1,\beta,\beta) } \colon} \
 e_{1}\ast e_{1}=\beta e_{1} , \quad e_{1}\ast e_{2}=\gamma e_{2} , \quad
e_{2}\ast e_{1}=\beta e_{2},
\\
 \mathcal{C}_{33} \colon\
 e_{1}\cdot e_{1}=e_{1} , \quad e_{2}\cdot e_{1}=e_{2} , \quad
 e_{1}\ast e_{2}=e_{1} , \quad e_{2}\ast e_{2}=e_{2}. %
\end{gather*}

All algebras are non-isomorphic, except
\smash{$\mathcal{C}_{31}^{0 ,\beta ,\gamma } \cong \mathcal{C}_{32}^{0 , \beta ,\gamma }$}.
\end{Theorem}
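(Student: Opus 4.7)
The plan is to deduce this classification directly from Theorem \ref{compre2} by imposing the extra Novikov-type identities. Since every Novikov algebra is in particular a pre-Lie algebra, a compatible Novikov algebra is in particular a compatible pre-Lie algebra, so every $2$-dimensional compatible Novikov structure must appear in the list of Theorem \ref{compre2}. Conversely, the ones that appear here are precisely those that additionally satisfy
\[
(x\cdot y)\cdot z=(x\cdot z)\cdot y,\qquad (x\ast y)\ast z=(x\ast z)\ast y,
\]
together with the mixed Novikov compatibility $(x\ast y)\cdot z+(x\cdot y)\ast z=(x\ast z)\cdot y+(x\cdot z)\ast y$.

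First I would observe that for any commutative product, the Novikov right-symmetry is automatic, so every algebra listed in Theorem \ref{comasscom2} is trivially a compatible Novikov algebra; this accounts for the first clause of the statement. Next, I would go through the families $\mathcal{C}_{01}$--$\mathcal{C}_{41}$ of Theorem \ref{compre2} one at a time, discarding those that are not Novikov on $(\mathbf{A},\cdot)$ alone (for instance $\mathcal{C}_{02}$, $\mathcal{C}_{05}^{\alpha\neq 0}$ with $\alpha\neq 0$ on the $\cdot$ side fail right-symmetry in the relevant cases, etc.), and for the remaining ones impose the two pending identities on the basis $\{e_1,e_2\}$ to obtain polynomial constraints on the structure parameters $\alpha,\beta,\gamma$. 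In each case the Novikov right-symmetry on $(\mathbf{A},\ast)$ and the mixed identity reduce to a small system of linear/quadratic equations in these parameters, which is straightforward to solve.

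The key subtleties I would pay attention to are: (i) the asymmetric pre-Lie operation $\cdot$ in families like $\mathcal{C}_{01}$, $\mathcal{C}_{04}$, $\mathcal{C}_{06}^{\alpha}$ forces the Novikov identity $e_1\cdot e_1\cdot e_2=e_1\cdot e_2\cdot e_1$, which rules out $\mathcal{C}_{02}$, $\mathcal{C}_{05}^{\alpha\neq 0}$ and $\mathcal{C}_{06}^{1}$ but preserves, e.g., $\mathcal{C}_{01}$, $\mathcal{C}_{04}$ and $\mathcal{C}_{06}^{\alpha\neq 1}$; (ii) families $\mathcal{C}_{13}$, $\mathcal{C}_{20}$, $\mathcal{C}_{21}$, $\mathcal{C}_{31}$, $\mathcal{C}_{32}$ survive only on a proper subvariety of parameters, corresponding exactly to the restrictions $\alpha\neq 1$, the extra parameter being~$0$, or the exclusion $(\alpha,\beta,\gamma)\neq(1,\beta,\beta)$ indicated in the statement; (iii) families $\mathcal{C}_{26}$--$\mathcal{C}_{27}$, $\mathcal{C}_{34}$--$\mathcal{C}_{41}$ are entirely eliminated because their defining relations force non-right-symmetric products of $e_1$'s and $e_2$'s. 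Finally I would inherit the uniqueness of representatives inside each surviving family directly from Theorem \ref{compre2}, noting in particular that the isomorphism $\mathcal{C}_{31}^{0,\beta,\gamma}\cong\mathcal{C}_{32}^{0,\beta,\gamma}$ is the only residual coincidence because the other pre-Lie identifications ($\mathcal{C}_{24}^{1,\beta,\gamma}\cong\mathcal{C}_{25}^{1,\beta,\gamma}$, $\mathcal{C}_{38}$, $\mathcal{C}_{39}$, $\mathcal{C}_{40}$, $\mathcal{C}_{41}$) either already fall under the commutative associative list of Theorem \ref{comasscom2} or involve algebras discarded above.

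The main obstacle is the case-by-case verification of the mixed Novikov compatibility for the families that involve three free parameters (notably $\mathcal{C}_{31}$ and $\mathcal{C}_{32}$): here one must carefully check that imposing $(e_i\ast e_j)\cdot e_k+(e_i\cdot e_j)\ast e_k=(e_i\ast e_k)\cdot e_j+(e_i\cdot e_k)\ast e_j$ on all $8$ triples does not cut out further parameter values beyond $(\alpha,\beta,\gamma)\neq(1,\beta,\beta)$, which is exactly the locus already present in Theorem \ref{comasscom2}. Once this is done, the statement follows by simply assembling the surviving algebras with their inherited non-isomorphism relations.
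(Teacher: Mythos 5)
Your overall strategy is exactly the one the paper (implicitly) uses: since every compatible Novikov algebra is a compatible pre-Lie algebra, run through the list of Theorem~\ref{compre2}, keep the structures satisfying the two extra right-symmetry identities and the mixed Novikov compatibility, and strip out the commutative ones already accounted for by Theorem~\ref{comasscom2}. That is sound, and the non-isomorphism statements do transfer as you say.

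However, several of your concrete justifications for eliminating algebras from the second list are factually wrong, because you conflate two different reasons for an algebra not to appear there: (a) it genuinely fails one of the Novikov identities, versus (b) it is a compatible Novikov algebra but is commutative and hence already listed in Theorem~\ref{comasscom2}. For instance, $\mathcal{C}_{06}^{1}$ is \emph{not} ruled out by right-symmetry --- one checks $(e_1\cdot e_1)\cdot e_2=e_2=(e_1\cdot e_2)\cdot e_1$ --- it is simply commutative and belongs to the first list; the same applies to $\mathcal{C}_{13}^{1}$ and to $\mathcal{C}_{31}^{1,\beta,\beta}$, $\mathcal{C}_{32}^{1,\beta,\beta}$ (a direct computation shows $\mathcal{C}_{31}^{\alpha,\beta,\gamma}$ satisfies \emph{all} the Novikov identities for every $(\alpha,\beta,\gamma)$, so the exclusion $(\alpha,\beta,\gamma)\neq(1,\beta,\beta)$ is purely an anti-double-counting condition, not a constraint cut out by the identities as your ``main obstacle'' paragraph suggests). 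Likewise your claim that $\mathcal{C}_{34}$--$\mathcal{C}_{41}$ are ``entirely eliminated because their defining relations force non-right-symmetric products'' is false for $\mathcal{C}_{34}$, $\mathcal{C}_{35}$, $\mathcal{C}_{38}$, $\mathcal{C}_{39}$: these are commutative compatible associative algebras, hence compatible Novikov algebras appearing in Theorem~\ref{comasscom2}; only $\mathcal{C}_{36}$, $\mathcal{C}_{37}$, $\mathcal{C}_{40}$, $\mathcal{C}_{41}$ actually fail an identity (e.g., for $\mathcal{C}_{36}$ one has $(e_2\ast e_1)\ast e_2\neq(e_2\ast e_2)\ast e_1$, and $\mathcal{C}_{40}$, $\mathcal{C}_{41}$ sit over the non-Novikov product $\mathcal{C}_{08}$). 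None of these slips changes the final list, but as written your case analysis would not survive verification; the fix is simply to separate cleanly, for each entry of Theorem~\ref{compre2}, the check of the three Novikov-type identities from the check of membership in the commutative list.
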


\section{The geometric classification of compatible algebras}

 \subsection{Degenerations and the geometric classification of algebras}
Let us introduce the techniques used to obtain the geometric classification for an arbitrary variety of compatible $\Omega$-algebras. Given a complex vector space ${\mathbb V}$ of dimension $n$, the set of bilinear maps
\[
\textrm{Bil}({\mathbb V} \times {\mathbb V}, {\mathbb V}) \cong \textrm{Hom}\bigl({\mathbb V} ^{\otimes2}, {\mathbb V}\bigr)\cong ({\mathbb V}^*)^{\otimes2} \otimes {\mathbb V}
\]
 is a vector space of dimension $n^3$. The set of pairs of bilinear maps % cal
\[
\textrm{Bil}({\mathbb V} \times {\mathbb V}, {\mathbb V}) \oplus \textrm{Bil}({\mathbb V}\times {\mathbb V}, {\mathbb V}) \cong ({\mathbb V}^*)^{\otimes2} \otimes {\mathbb V} \oplus ({\mathbb V}^*)^{\otimes2} \otimes {\mathbb V},
\]
 which is a vector space of dimension $2n^3$. This vector space has the structure of the affine space~$\mathbb{C}^{2n^3}$ in the following sense:
fixed a basis $e_1, \ldots, e_n$ of ${\mathbb V}$, then any pair with multiplication~$(\mu, \mu')$, is
determined by some parameters $c_{ij}^k, c_{ij}'^k \in \mathbb{C}$, called {structural constants}, such that
\[
\mu(e_i, e_j) = \sum_{p=1}^n c_{ij}^k e_k \quad \textrm{and}\quad \mu'(e_i, e_j) = \sum_{p=1}^n c_{ij}'^k e_k,
\]
which corresponds to a point in the affine space \smash{$\mathbb{C}^{2n^3}$}. Then a set of bilinear % verno
pairs $\mathcal S$ corresponds to an algebraic variety, i.e., a Zariski closed set, if there are some polynomial equations in variables $c_{ij}^k$, $c_{ij}'^k$ with zero locus equal to the set of structural constants of the bilinear pairs in~$\mathcal S$. Given the identities defining a particular class of compatible $\Omega$-algebras, we can obtain a set of polynomial equations in variables $c_{ij}^k$, $c_{ij}'^k$. This class of $n$-dimensional compatible $\Omega$-algebras is a variety. Denote it by $\mathcal{T}_{n}$. %rtrt33
Now, consider the following action of $\rm{GL}({\mathbb V})$ on ${\mathcal T}_{n}$
\[
(g*(\mu, \mu'))(x,y) := \bigl(g \mu \bigl(g^{-1} x, g^{-1} y\bigr), g \mu' \bigl(g^{-1} x, g^{-1} y\bigr)\bigr)
\]
for $g\in\rm{GL}({\mathbb V})$, $(\mu, \mu')\in \mathcal{T}_{n}$ and for any $x, y \in {\mathbb V}$. Observe that the $\textrm{GL}({\mathbb V})$-orbit of $(\mu, \mu')$, denoted ${\mathcal O}((\mu, \mu'))$, contains all the structural constants of the bilinear pairs isomorphic to the compatible $\Omega$-algebras with structural constants $(\mu, \mu')$.

A geometric classification of a variety of algebras consists of describing the irreducible components of the variety. Recall that any affine variety can be represented as a finite union of its irreducible components uniquely.
Note that describing the irreducible components of ${\mathcal{T}_{n}}$ gives us the rigid
algebras of the variety, which are those bilinear pairs with an open $\textrm{GL}(\mathbb V)$-orbit. This %rtrt33
is due to the fact that a bilinear pair is rigid in a variety if and only if the closure of its orbit is an irreducible component of the variety. % fgdf
For this reason, the following notion is convenient. Denote by $\overline{{\mathcal O}((\mu, \mu'))}$ the closure of the orbit of $(\mu, \mu')\in{\mathcal{T}_{n}}$.

\begin{Definition}
Let ${\rm T} $ and ${\rm T}'$ be two $n$-dimensional compatible $\Omega$-algebras of a fixed class corresponding to the variety $\mathcal{T}_{n}$ and $(\mu, \mu'), (\lambda,\lambda') \in \mathcal{T}_{n}$ be their representatives in the affine space, respectively. The algebra ${\rm T}$ is said to {degenerate} to ${\rm T}'$, and we write ${\rm T} \to {\rm T} '$, if $(\lambda,\lambda')\in\overline{{\mathcal O}((\mu, \mu'))}$. If ${\rm T} \not\cong {\rm T}'$, then we call it a {proper degeneration}.
Conversely, if $(\lambda,\lambda')\not\in\overline{{\mathcal O}((\mu, \mu'))}$ then we say that ${{\rm T} }$ does not degenerate to ${{\rm T} }'$
and we write ${{\rm T} }\not\to {{\rm T} }'$.
\end{Definition}

Furthermore, for a parametric family of algebras, we have the following notion.

\begin{Definition}
Let ${{\rm T} }(*)=\{{{\rm T} }(\alpha)\colon {\alpha\in I}\}$ be a family of $n$-dimensional compatible $\Omega$-algebras of a fixed class corresponding to ${\mathcal{T} }_n$ and let ${{\rm T} }'$ be another algebra. Suppose that ${{\rm T} }(\alpha)$ is represented by the structure $(\mu(\alpha),\mu'(\alpha))\in{\mathcal{T} }_n$ for $\alpha\in I$ and ${{\rm T} }'$ is represented by the structure~${(\lambda, \lambda')\in{\mathcal{T} }_n}$. We say that the family ${{\rm T} }(*)$ {degenerates} to ${{\rm T} }'$, and write ${{\rm T} }(*)\to {{\rm T} }'$, if~\smash{$(\lambda,\lambda')\in\overline{\{{\mathcal O}((\mu(\alpha),\mu'(\alpha)))\}_{\alpha\in I}}$}.
Conversely, if~\smash{$(\lambda,\lambda')\not\in\overline{\{{\mathcal O}((\mu(\alpha),\mu'(\alpha)))\}_{\alpha\in I}}$} then we call it a~{non-degeneration}, and we write ${{\rm T} }(*)\not\to {{\rm T} }'$.
\end{Definition}

Observe that ${\rm T}'$ corresponds to an irreducible component of $\mathcal{T}_n$ (more precisely, $\overline{{\rm T}'}$ is an irreducible component) if and only if ${{\rm T} }\not\to {{\rm T} }'$ for any $n$-dimensional compatible $\Omega$-algebra~${\rm T}$ and~${{{\rm T}(*) }\not\to {{\rm T} }'}$ for any parametric family of $n$-dimensional compatible $\Omega$-algebras ${\rm T}(*)$. To prove a~particular algebra corresponds to an irreducible component, we will use the next ideas.
Firstly, since $\operatorname{dim}{\mathcal O}((\mu, \mu')) = n^2 - \operatorname{dim}\mathfrak{Der}({\rm T})$, then if $ {\rm T} \to {\rm T} '$ and ${\rm T} \not\cong {\rm T} '$, we have that $\operatorname{dim}\mathfrak{Der}( {\rm T} )<\operatorname{dim}\mathfrak{Der}( {\rm T} ')$, where $\mathfrak{Der}( {\rm T} )$ denotes the Lie algebra of derivations of ${\rm T} $.
Secondly, to prove degenerations, let ${{\rm T} }$ and ${{\rm T} }'$ be two compatible $\Omega$-algebras represented by the structures~$(\mu, \mu')$ and $(\lambda, \lambda')$ from ${{\mathcal T} }_n$, respectively. Let \smash{$c_{ij}^k$}, \smash{$c_{ij}'^k$} be the structure constants of $(\lambda, \lambda')$ in a~basis $e_1,\dots, e_n$ of ${\mathbb V}$. If there exist $n^2$ maps \smash{$a_i^j(t)\colon \mathbb{C}^*\to \mathbb{C}$} such that \smash{$E_i(t)=\sum_{j=1}^na_i^j(t)e_j$} ($1\leq i \leq n$) form a basis of ${\mathbb V}$ for any $t\in\mathbb{C}^*$ and the structure constants $c_{ij}^k(t)$, $c_{ij}'^k(t)$ of $(\mu, \mu')$ in the basis $E_1(t),\dots, E_n(t)$ satisfy \smash{$\lim_{t\to 0}c_{ij}^k(t)=c_{ij}^k$} and \smash{$\lim_{t\to 0}c_{ij}'^k(t)=c_{ij}'^k$}, then ${{\rm T} }\to {{\rm T} }'$. In this case, $E_1(t),\dots, E_n(t)$ is called a parametrized basis for ${{\rm T} }\to {{\rm T} }'$.
In case of $E_1^t, E_2^t, \ldots, E_n^t$ is a~parametric basis for ${\bf A}\to {\bf B}$, it will be denoted by
\[
{\bf A}\xrightarrow{(E_1^t, E_2^t, \ldots, E_n^t)} {\bf B}.
\]

Thirdly, to prove non-degenerations we may use a remark that follows from this lemma, see~\cite{afm}.

\begin{Lemma} %\label{main1}
Consider two compatible $\Omega$-algebras ${\rm T}$ and ${\rm T}'$. Suppose ${\rm T} \to {\rm T}'$. Let C be a Zariski closed in ${\mathcal T}_n$ that is stable by the action of the invertible upper $($lower$)$ triangular matrices. Then if there is a representation $(\mu, \mu')$ of ${\rm T}$ in C, then there is a representation $(\lambda, \lambda')$ of ${\rm T}'$ in C.
\end{Lemma}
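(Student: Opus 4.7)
The plan is to derive the conclusion from the properness of the flag variety $\mathrm{GL}(\mathbb{V})/B$, where $B$ denotes the Borel subgroup of invertible upper (or lower) triangular matrices with respect to the fixed basis defining $C$'s stability.

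First I would recall the standard algebro-geometric fact that for any point $x$ in an affine $\mathrm{GL}(\mathbb{V})$-variety one has the identity
\[
\overline{\mathrm{GL}(\mathbb{V}) \cdot x} \;=\; \mathrm{GL}(\mathbb{V}) \cdot \overline{B \cdot x}.
\]
This follows from considering the map $\mathrm{GL}(\mathbb{V}) \times^{B} \overline{B\cdot x} \to \mathcal{T}_n$: since $\mathrm{GL}(\mathbb{V})/B$ is projective (hence complete), the source is a proper $\mathrm{GL}(\mathbb{V})$-variety, so its image is closed, contains the orbit $\mathrm{GL}(\mathbb{V})\cdot x$, and is contained in its closure.

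Next I would apply this to $x = (\mu,\mu')$. The hypothesis $\mathrm{T}\to \mathrm{T}'$ means $(\lambda,\lambda') \in \overline{\mathrm{GL}(\mathbb{V})\cdot(\mu,\mu')}$, so by the identity above there exists $g \in \mathrm{GL}(\mathbb{V})$ with $g^{-1}*(\lambda,\lambda') \in \overline{B\cdot(\mu,\mu')}$. Because $(\mu,\mu') \in C$ and $C$ is stable under $B$, the whole orbit $B\cdot(\mu,\mu')$ lies in $C$; since $C$ is Zariski closed, $\overline{B\cdot(\mu,\mu')} \subseteq C$ as well. Therefore $g^{-1}*(\lambda,\lambda') \in C$, and this point is a representative of the isomorphism class of $\mathrm{T}'$, which is the required representation of $\mathrm{T}'$ inside $C$.

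The main obstacle is the input identity $\overline{\mathrm{GL}(\mathbb{V})\cdot x} = \mathrm{GL}(\mathbb{V})\cdot\overline{B\cdot x}$: everything else is formal manipulation of closures and orbits, but this identity is the substantive algebraic-geometric content and relies on the completeness of the flag variety. Once it is granted, the lemma reduces to chasing the $B$-stability of $C$ through the factorization of $g$. In practice, the lemma is used contrapositively: one exhibits a closed, triangular-stable locus $C$ (typically cut out by vanishing of certain structure constants in an adapted basis) that contains $\mathrm{T}$ but no representative of $\mathrm{T}'$, thereby certifying $\mathrm{T} \not\to \mathrm{T}'$.
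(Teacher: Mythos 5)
Your argument is correct and is precisely the standard proof of this lemma: the paper does not prove it itself but cites \cite{afm}, where the same reduction via the identity $\overline{\mathrm{GL}({\mathbb V})\cdot x}=\mathrm{GL}({\mathbb V})\cdot\overline{B\cdot x}$ (a consequence of the completeness of the flag variety $\mathrm{GL}({\mathbb V})/B$), followed by the observation that $\overline{B\cdot(\mu,\mu')}\subseteq C$ by $B$-stability and closedness of $C$, is exactly the intended argument. One minor wording fix: $\mathrm{GL}({\mathbb V})\times^{B}\overline{B\cdot x}$ is not itself a proper variety; rather, the collapsing map to ${\mathcal T}_n$ is proper because it factors as a closed immersion into $\bigl(\mathrm{GL}({\mathbb V})/B\bigr)\times{\mathcal T}_n$ followed by the projection, which is closed since $\mathrm{GL}({\mathbb V})/B$ is complete, and this still yields that the image $\mathrm{GL}({\mathbb V})\cdot\overline{B\cdot x}$ is closed.
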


In order to apply this lemma, we will give the explicit definition of the appropriate stable Zariski closed $C$ in terms of the variables $c_{ij}^k$, $c_{ij}'^k$ in each case.

\begin{Remark}\label{redbil}
Moreover, let ${{\rm T} }$ and ${{\rm T} }'$ be two compatible $\Omega$-algebras represented by the structures $(\mu, \mu')$ and $(\lambda, \lambda')$ from ${\mathcal{T}_{n}}$. Suppose ${\rm T}\to{\rm T}'$. Then if $\mu$, $\mu'$, $\lambda$, $\lambda'$ represents algebras ${\rm T}_{0}$, ${\rm T}_{1}$, ${\rm T}'_{0}$, ${\rm T}'_{1}$ in the affine space $\mathbb{C}^{n^3}$ of algebras with a single multiplication, respectively, we have~${{\rm T}_{0}\to {\rm T}'_{0}}$ and $ {\rm T}_{1}\to {\rm T}'_{1}$.
So, for example, $(0, \mu)$ can not degenerate in $(\lambda, 0)$ unless $\lambda=0$.
\end{Remark}

Fourthly, to prove ${{\rm T} }(*)\to {{\rm T} }'$, suppose that ${{\rm T} }(\alpha)$ is represented by the structure $(\mu(\alpha),\mu'(\alpha))\allowbreak\in{\mathcal{T} }_n$ for $\alpha\in I$ and ${{\rm T} }'$ is represented by the structure $(\lambda, \lambda')\in{\mathcal{T} }_n$. Let $c_{ij}^k$, $c_{ij}'^k$ be the structure constants of $(\lambda, \lambda')$ in a basis $e_1,\dots, e_n$ of ${\mathbb V}$. If there is a pair of maps \smash{$\bigl(f, \bigl(a_i^j\bigr)\bigr)$}, where $f\colon \mathbb{C}^*\to I$ and $a_i^j\colon\mathbb{C}^*\to \mathbb{C}$ are such that \smash{$E_i(t)=\sum_{j=1}^na_i^j(t)e_j$} ($1\le i\le n$) form a basis of ${\mathbb V}$ for any~${t\in\mathbb{C}^*}$ and the structure constants $c_{ij}^k(t)$, $c_{ij}'^k(t)$ of $(\mu(f(t)),\mu'(f(t)))$ in the basis $E_1(t),\dots, E_n(t)$ satisfy~\smash{$\lim_{t\to 0}c_{ij}^k(t)=c_{ij}^k$} and \smash{$\lim_{t\to 0}c_{ij}'^k(t)=c_{ij}'^k$}, then ${{\rm T} }(*)\to {{\rm T} }'$. In this case, $E_1(t),\dots, E_n(t)$ and $f(t)$ are called a parametrized basis and a parametrized index for ${{\rm T} }(*)\to {{\rm T} }'$, respectively.
Fifthly, to prove ${{\rm T} }(*)\not \to {{\rm T} }'$, we may use an analogous of Remark \ref{redbil} for parametric families that follows from Lemma \ref{main2}.

\begin{Lemma}\label{main2}
Consider a family of compatible $\Omega$-algebras ${\rm T}(*)$ and a compatible $\Omega$-algebra ${\rm T}'$. Suppose ${\rm T}(*) \to {\rm T}'$. Let C be a Zariski closed in $\mathcal{T}_n$ that is stable by the action of the invertible upper $($lower$)$ triangular matrices. Then if there is a representation $(\mu(\alpha), \mu'(\alpha))$ of ${\rm T}(\alpha)$ in C for every $\alpha\in I$, then there is a representation $(\lambda, \lambda')$ of ${\rm T}'$ in C.
\end{Lemma}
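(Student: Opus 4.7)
The plan is to adapt, with minimal changes, the proof of the preceding non-parametric Lemma to the family case. The key ingredients are a concrete one-parameter realization of the degeneration and the Iwasawa decomposition of $\mathrm{GL}_n(\mathbb{C})$, which lets us absorb the upper-triangular part of the transforming group element into $C$ by stability, while the remaining part lives in a compact group and therefore behaves well under limits.

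First I would unfold the hypothesis ${\rm T}(*)\to{\rm T}'$ into an explicit parametrized basis and parametrized index, which by the definition given just above the lemma yields analytic curves $g\colon (0,\epsilon)\to\mathrm{GL}_n(\mathbb{C})$ and $\alpha\colon (0,\epsilon)\to I$ with
\[
\lim_{t\to 0}\, g(t)\cdot\bigl(\mu(\alpha(t)),\mu'(\alpha(t))\bigr)=(\lambda,\lambda').
\]
If one starts instead from the purely closure-theoretic formulation of the degeneration, the existence of such a curve follows from the coincidence of Zariski and Euclidean closures of constructible sets over $\mathbb{C}$ together with curve selection. Next I would apply the Iwasawa decomposition $\mathrm{GL}_n(\mathbb{C})=U(n)\cdot B$, with $U(n)$ compact and $B$ contained in the algebraic group of upper triangular matrices, to write $g(t)=k(t)\cdot b(t)$, and define
\[
p(t):=b(t)\cdot\bigl(\mu(\alpha(t)),\mu'(\alpha(t))\bigr).
\]
Since every $(\mu(\alpha(t)),\mu'(\alpha(t)))$ lies in $C$ and $C$ is stable under $B$, we have $p(t)\in C$ for all $t$. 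Compactness of $U(n)$ yields a subsequence $t_j\to 0$ with $k(t_j)\to k_0\in U(n)$, whence
\[
p(t_j)=k(t_j)^{-1}\cdot g(t_j)\cdot\bigl(\mu(\alpha(t_j)),\mu'(\alpha(t_j))\bigr)\longrightarrow k_0^{-1}\cdot(\lambda,\lambda').
\]
Closedness of $C$ then forces $k_0^{-1}\cdot(\lambda,\lambda')\in C$, and this point, lying in the $\mathrm{GL}_n$-orbit of $(\lambda,\lambda')$, is the sought representation of ${\rm T}'$ in $C$. If instead $C$ is stable under lower triangular matrices, the same argument goes through with the opposite Iwasawa decomposition $\mathrm{GL}_n(\mathbb{C})=B^-\cdot U(n)$.

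The main obstacle I expect is the first step, namely extracting an honest analytic one-parameter witness to the family degeneration from the bare closure assumption; once this curve is available, the rest is essentially formal. The only substantive new ingredient relative to the non-parametric lemma is that the point transformed by $g(t)$ is now allowed to drift within the family through $\alpha(t)$, which is harmless because the $B$-stability of $C$ is applied pointwise in $t$.
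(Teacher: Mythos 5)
Your argument is correct, but it is worth noting that the paper does not actually prove Lemma~\ref{main2}: it is stated as the parametric analogue of the preceding lemma, whose proof is delegated to~\cite{afm}. The proof given there (and the standard one in the degeneration literature, going back to Grunewald--O'Halloran) is purely algebraic: since $C$ is Zariski closed and stable under the Borel subgroup $B$ of upper triangular matrices, the completeness of the flag variety $\mathrm{GL}_n(\mathbb{C})/B$ makes $\mathrm{GL}_n(\mathbb{C})\cdot C$ the image of the closed subvariety $G\times_B C\subseteq G/B\times\mathcal{T}_n$ under a closed projection, hence Zariski closed; as $\bigcup_{\alpha}\mathcal{O}((\mu(\alpha),\mu'(\alpha)))\subseteq \mathrm{GL}_n(\mathbb{C})\cdot C$, the Zariski closure of this union, and in particular $(\lambda,\lambda')$, lies in $\mathrm{GL}_n(\mathbb{C})\cdot C$, which immediately yields a representative of ${\rm T}'$ in $C$. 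Your route via the Iwasawa (QR) decomposition $g(t)=k(t)b(t)$ and compactness of $U(n)$ is the analytic counterpart of the same completeness argument and is equally valid; its one extra cost is exactly the step you flag, namely converting the Zariski-closure hypothesis into a Euclidean limit of a sequence $g(t_j)\cdot(\mu(\alpha(t_j)),\mu'(\alpha(t_j)))\to(\lambda,\lambda')$, which requires constructibility of $\bigcup_{\alpha}\mathcal{O}((\mu(\alpha),\mu'(\alpha)))$ (automatic here, by Chevalley, since the families depend polynomially on $\alpha$ and $I$ is constructible). The algebraic proof sidesteps this entirely, which is why it is the one usually preferred; on the other hand your version needs only a subsequence, not an analytic curve, so the appeal to curve selection is superfluous. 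Both proofs use the $B$-stability of $C$ pointwise in $t$ in the same way, and your treatment of the lower triangular case by the opposite decomposition is fine.
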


Finally, the following remark simplifies the geometric problem.

\begin{Remark}\label{remrem}
 Let $(\mu, \mu')$ and $(\lambda, \lambda')$ represent two compatible $\Omega$-algebras. Suppose $(\lambda, 0)\not\in\overline{{\mathcal O}((\mu, 0))}$, (resp.\ $(0, \lambda')\not\in\overline{{\mathcal O}((0, \mu'))}$), then $(\lambda, \lambda')\not\in\overline{{\mathcal O}((\mu, \mu'))}$.
 As we construct the classification of a given class of compatible $\Omega$-algebras from a certain class of algebras with a single multiplication that remains unchanged, this remark becomes very useful.
\end{Remark}

\subsection[The geometric classification of compatible commutative associative algebras]{The geometric classification of compatible commutative\\ associative algebras}
The main result of the present section is the following theorem.

\begin{Theorem}\label{geo2}
The variety of complex $2$-dimensional compatible commutative associative algebras has
dimension $7$ and it has $2$ irreducible components defined by
\smash{$ \overline{\mathcal{O}\bigl( \mathcal{C}_{39}^{\alpha,\beta}\bigr)}$} and
\smash{$ \overline{\mathcal{O}\bigl( \mathcal{C}_{38}^{\alpha,\beta, \gamma}\bigr)}$}.
In particular, there are no rigid algebras in this variety.
\end{Theorem}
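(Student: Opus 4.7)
The plan is to establish the theorem in three main steps: (a) compute dimensions of the orbit closures for the two candidate families, (b) exhibit degenerations from these families to every other algebra in Theorem~\ref{comasscom2}, and (c) prove the two families define distinct irreducible components.

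For (a), I would verify by direct calculation that $\mathfrak{Der}\bigl(\mathcal{C}_{38}^{\alpha,\beta,\gamma}\bigr)$ and $\mathfrak{Der}\bigl(\mathcal{C}_{39}^{\alpha,\beta}\bigr)$ are trivial at generic parameter values, so the associated $\mathrm{GL}_2(\mathbb{C})$-orbits have the maximal dimension $n^2=4$. Combined with the $2$-to-$1$ parameter identifications listed in Theorem~\ref{comasscom2}, the closures $\overline{\bigcup_{\alpha,\beta,\gamma}\mathcal{O}(\mathcal{C}_{38}^{\alpha,\beta,\gamma})}$ and $\overline{\bigcup_{\alpha,\beta}\mathcal{O}(\mathcal{C}_{39}^{\alpha,\beta})}$ are irreducible of dimensions $4+3=7$ and $4+2=6$ respectively; the first already realises the claimed dimension of the whole variety.

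For (b), I would apply Remark~\ref{remrem} to reduce the degeneration problem to a first-multiplication check: since the $\cdot$-operation of both $\mathcal{C}_{38}$ and $\mathcal{C}_{39}$ is $\mathcal{C}_{07}$, and $\mathcal{C}_{07}$ sits at the top of the $2$-dimensional commutative associative degeneration graph~\cite{BB14}, the first multiplications $0$, $\mathcal{C}_{03}$, $\mathcal{C}_{05}^{0}$, $\mathcal{C}_{06}^{1}$, $\mathcal{C}_{07}$ appearing in Theorem~\ref{comasscom2} are all reachable from $\mathcal{C}_{07}$. I would then extend each parametrized basis, allowing the family parameters $\alpha,\beta,\gamma$ to depend on the degeneration parameter~$t$, so that the $\ast$-products also converge to the prescribed limits. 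For (c), I would construct a Zariski closed subset $C\subset\mathcal{T}_2$, stable under the invertible upper triangular matrices, containing every $\mathcal{C}_{38}^{\alpha,\beta,\gamma}$ but missing the generic $\mathcal{C}_{39}^{\alpha,\beta}$, and then apply Lemma~\ref{main2}. A natural candidate encodes the condition ``$v\ast v\in\mathbb{C}\cdot v$, where $v$ is the identity of $(\cdot)$'': in $\mathcal{C}_{38}^{\alpha,\beta,\gamma}$ one computes $(e_1+e_2)\ast(e_1+e_2)=(\beta+\gamma)(e_1+e_2)$, whereas in $\mathcal{C}_{39}^{\alpha,\beta}$ the same expression equals $\alpha e_1+\beta e_2$, which is not proportional to $e_1+e_2$ when $\alpha\neq\beta$. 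The reverse non-containment $\mathcal{C}_{39}(*)\not\to\mathcal{C}_{38}^{\alpha,\beta,\gamma}$ for generic parameters follows automatically from the dimension count in~(a): a $6$-dimensional closure cannot absorb a genuine $3$-parameter family of $4$-dimensional orbits.

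The main obstacle is step~(c): the invariant used in Lemma~\ref{main2} must be a polynomial condition in the structure constants that is stable under the upper triangular action on $\mathcal{T}_2$, not merely under $\mathrm{Aut}(\cdot)$. One has to isolate the identity $v$ of $(\cdot)$ intrinsically through polynomial equations expressing that $v$ is a two-sided unit for $(\cdot)$, and only then impose ``$v\ast v$ is proportional to $v$'', so that the resulting subvariety is simultaneously Zariski closed and triangular-stable.
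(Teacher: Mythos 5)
Your overall strategy is the same as the paper's: compute $\dim\overline{\mathcal{O}\bigl(\mathcal{C}_{38}^{\alpha,\beta,\gamma}\bigr)}=7$ and $\dim\overline{\mathcal{O}\bigl(\mathcal{C}_{39}^{\alpha,\beta}\bigr)}=6$ via trivial derivations, exhibit degenerations onto every algebra of Theorem~\ref{comasscom2}, and separate the two families by a triangular-stable Zariski closed set. Your invariant in step~(c) is also the morally correct one, and your computations $(e_1+e_2)\ast(e_1+e_2)=(\beta+\gamma)(e_1+e_2)$ in $\mathcal{C}_{38}^{\alpha,\beta,\gamma}$ versus $\alpha e_1+\beta e_2$ in $\mathcal{C}_{39}^{\alpha,\beta}$ are right. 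However, as you yourself flag, you have not produced the object Lemma~\ref{main2} actually requires: a set cut out by polynomial equations in the $c_{ij}^k$, $c_{ij}'^k$ that is stable under the triangular action. The condition ``there exists a $(\cdot)$-unit $v$ with $v\ast v\in\mathbb{C}v$'' involves a quantifier over $v$ and is only constructible, not visibly closed; eliminating the quantifier and checking triangular stability is the real content of the step. The paper does exactly this with the explicit relation $\mathcal R=\bigl\{c_{22}^1=c_{12}^1=0,\ c_{22}^2 c_{12}'^2 + c_{12}^2 c_{21}'^1 + c_{11}^1 c_{22}'^2 = c_{11}'^1 c_{22}^2 + c_{11}^1 c_{21}'^1 + c_{12}^2 c_{22}'^2\bigr\}$, which in flag-adapted coordinates encodes your unit condition (both sides evaluate to $\beta+\gamma$ on $\mathcal{C}_{38}^{\alpha,\beta,\gamma}$ and the relation forces $\alpha=\beta$ on $\mathcal{C}_{39}^{\alpha,\beta}$). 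Until you write down such equations and verify their stability, step~(c) is a gap, not a proof.

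Step~(b) is also under-argued. Remark~\ref{redbil} gives only a \emph{necessary} condition for degeneration of pairs, so the fact that $\mathcal{C}_{07}$ dominates the one-multiplication degeneration graph of $2$-dimensional commutative associative algebras says nothing about whether the second multiplications can be made to converge simultaneously; a priori some algebra such as $\mathcal{C}_{18}^{\alpha}$ or $\mathcal{C}_{34}^{\alpha,\beta}$ could fail to lie in either closure and would then create an extra irreducible component, invalidating the count of two. The paper must, and does, exhibit an explicit parametrized basis together with a $t$-dependent choice of parameters for each of the roughly one dozen remaining algebras (for instance \smash{$\mathcal{C}_{38}^{-\frac{1+\alpha t^2}{4t^2},-\frac{1+\alpha t^2}{4t^2},\frac{\alpha t^2-3}{4t^2}}\to\mathcal{C}_{18}^{\alpha}$} along $(ite_1-ite_2,-t^2e_1-t^2e_2)$). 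Your plan correctly identifies that this is what must be done, but ``extend each parametrized basis'' is not a verification, and this is where most of the actual work of the theorem lives.
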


\begin{proof}
 Thanks to Theorem \ref{comasscom2}, we have the algebraic classification of
 $2$-dimensional compatible commutative associative algebras.
After carefully checking the dimensions of orbit closures of the more important for us algebras, we have
$\dim \mathcal{O}\bigl(\mathcal{C}_{38}^{\alpha,\beta,\gamma}\bigr) = 7 $ and $ \dim \mathcal{O}\bigl(\mathcal{C}_{39}^{\alpha,\beta}\bigr) = 6$.
\smash{$\mathcal{C}_{38}^{*} \not\to \mathcal{C}_{39}^{\alpha,\beta}$} due to the following relation:
\[
\mathcal R=\big\{
c_{22}^1=c_{12}^1=0,\,
c_{22}^2 c_{12}'^2 + c_{12}^2 c_{21}'^1 + c_{11}^1 c_{22}'^2 = c_{11}'^1 c_{22}^2 + c_{11}^1 c_{21}'^1 + c_{12}^2 c_{22}'^2\big\}.
\]

Thanks to \cite{KV}, we have
$\mathcal{C}_{07} \to \big\{\mathcal{C}_{03}, \mathcal{C}_{05}^0, \mathcal{C}_{06}^1\big\}.$ All necessary degenerations are given by
\begin{gather*}
 \mathcal{C}_{38}^{0, 0, t^{-1}} \xrightarrow{ (te_1, t e_2)} \mathcal{C}_{07} ,\quad
 \mathcal{C}_{39}^{-it^{-1}, it^{-1}} \xrightarrow{ (i te_1-i te_2, -t^2e_1-t^2e_2)} \mathcal{C}_{13}^{1} , \\
 \mathcal{C}_{38}^{0, 0, \alpha} \xrightarrow{ (i te_1-i te_2, -t^2e_1-t^2e_2)} \mathcal{C}_{15}^{\alpha} ,\quad
 \mathcal{C}_{38}^{\frac{i}{2t}, 0, \frac{i}{t}} \xrightarrow{ (i te_1-i te_2, -t^2e_1-t^2e_2)} \mathcal{C}_{16}^{0} , \\
 \mathcal{C}_{38}^{-\frac{1+\alpha t^2}{4 t^2}, -\frac{1+\alpha t^2}{4 t^2}, \frac{\alpha t^2-3}{4t^2}} \xrightarrow{ (i te_1-i t e_2, -t^2e_1-t^2e_2)} \mathcal{C}_{18}^{\alpha} ,\quad \mathcal{C}_{38}^{0, -t, \beta+t} \xrightarrow{ (e_1, te_2)} \mathcal{C}_{24}^{0,\beta,0} , \\
 \mathcal{C}_{38}^{0, 0, \beta} \xrightarrow{ (e_1, te_2)} \mathcal{C}_{25}^{0,\beta,0} ,\quad
 \mathcal{C}_{39}^{\alpha, t^{-1}} \xrightarrow{ (e_1, te_2)} \mathcal{C}_{29}^{\alpha} , \quad \mathcal{C}_{39}^{-t+\beta, \beta} \xrightarrow{ (e_1+e_2, te_2)} \mathcal{C}_{31}^{1,\beta,\beta} ,\\
 \mathcal{C}_{38}^{-\alpha-\beta t +t^{-1}, - \beta t , t^{-1}} \xrightarrow{ (e_1+e_2, te_2)} \mathcal{C}_{30}^{\alpha, \beta} ,\quad \mathcal{C}_{38}^{0, \beta, 0} \xrightarrow{ (e_1+e_2, te_2)} \mathcal{C}_{32}^{1, \beta, \beta} , \\
 \mathcal{C}_{38}^{-t^{-2}, \alpha -(1+\beta t)t^{-2}, (1+\beta t)t^{-2}} \xrightarrow{ (e_1+e_2, te_2)} \mathcal{C}_{34}^{\alpha, \beta} , \quad \mathcal{C}_{38}^{\frac{1+2\alpha t}{2t}, \alpha, 0} \xrightarrow{ (e_1+e_2, -te_1+te_2)} \mathcal{C}_{35}^{\alpha}.\tag*{\qed}
 \end{gather*}\renewcommand{\qed}{}
\end{proof}

\subsection{The geometric classification of compatible associative algebras}
The main result of the present section is the following theorem.

\begin{Theorem}\label{geo3}
The variety of complex $2$-dimensional compatible associative algebras has
dimension $7$ and it has $4$ irreducible components defined by
\smash{$ \overline{\mathcal{O}( \mathcal{C}_{28})}$},
\smash{$ \overline{\mathcal{O}( \mathcal{C}_{33})}$},
\smash{$ \overline{\mathcal{O}( \mathcal{C}_{39}^{\alpha,\beta})}$} and
\smash{$ \overline{\mathcal{O}( \mathcal{C}_{38}^{\alpha,\beta, \gamma})}$}.
In particular, there are two rigid algebras in this variety.

\end{Theorem}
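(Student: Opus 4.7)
The strategy parallels the proof of Theorem~\ref{geo2}. By Theorem~\ref{comass2}, every $2$-dimensional compatible associative algebra is either a compatible commutative associative algebra (classified in Theorem~\ref{comasscom2}) or one of the six algebras $\mathcal{C}_{05}^{1}$, $\mathcal{C}_{06}^{0}$, $\mathcal{C}_{25}^{1,\beta,\beta}$, $\mathcal{C}_{28}$, $\mathcal{C}_{32}^{0,\beta,0}$, $\mathcal{C}_{33}$. The goal is to show that the four orbit closures listed in the statement exhaust the irreducible components and that every other algebra degenerates from one of them.

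First, I would inherit the dimension computations $\dim\overline{\mathcal{O}(\mathcal{C}_{38}^{\alpha,\beta,\gamma})}=7$ and $\dim\overline{\mathcal{O}(\mathcal{C}_{39}^{\alpha,\beta})}=6$ from Theorem~\ref{geo2}, and compute $\dim\mathfrak{Der}(\mathcal{C}_{28})$ and $\dim\mathfrak{Der}(\mathcal{C}_{33})$ by imposing the Leibniz rule for both $\cdot$ and $\ast$; from $\dim\mathcal{O}=n^2-\dim\mathfrak{Der}$ this yields the orbit dimensions of the two candidate rigid algebras.

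Second, I would establish that the four orbit closures are pairwise non-contained, hence give distinct components. The decisive observation is that the first multiplication of both $\mathcal{C}_{38}^{\alpha,\beta,\gamma}$ and $\mathcal{C}_{39}^{\alpha,\beta}$ is commutative, while that of $\mathcal{C}_{28}$ and of $\mathcal{C}_{33}$ is not. Since commutativity of the first multiplication defines a Zariski closed subset of $\mathcal{T}_{2}$ that is stable under $\mathrm{GL}_{2}(\mathbb{C})$ (in particular under triangular actions), Lemma~\ref{main2} yields
\[
\mathcal{C}_{38}^{\alpha,\beta,\gamma}\not\to\mathcal{C}_{28},\mathcal{C}_{33},\qquad \mathcal{C}_{39}^{\alpha,\beta}\not\to\mathcal{C}_{28},\mathcal{C}_{33}.
\]
The non-degenerations $\mathcal{C}_{28}\not\to\mathcal{C}_{33}$ and $\mathcal{C}_{33}\not\to\mathcal{C}_{28}$ follow from orbit-dimension comparison together with $\mathcal{C}_{28}\not\cong\mathcal{C}_{33}$, and the non-containment of $\overline{\mathcal{O}(\mathcal{C}_{38}^{*})}$ and $\overline{\mathcal{O}(\mathcal{C}_{39}^{*})}$ in each other is already established in the proof of Theorem~\ref{geo2} via the relation $\mathcal{R}$.

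Third, I would cover all remaining algebras by explicit degenerations. Every compatible commutative associative algebra is already handled by the parametrized bases displayed in the proof of Theorem~\ref{geo2}. For the four additional non-commutative items~$\mathcal{C}_{05}^{1}$, $\mathcal{C}_{06}^{0}$, $\mathcal{C}_{25}^{1,\beta,\beta}$, $\mathcal{C}_{32}^{0,\beta,0}$, I would produce parametrized bases exhibiting them as limits of $\mathcal{C}_{28}$ (for the first pair, which share the first multiplication $e_{1}\cdot e_{1}=e_{1}$, $e_{1}\cdot e_{2}=e_{2}$) and of $\mathcal{C}_{33}$ (for the second pair, with first multiplication $e_{1}\cdot e_{1}=e_{1}$, $e_{2}\cdot e_{1}=e_{2}$), in parallel with Remark~\ref{redbil}.

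I expect the main obstacle to be the rigorous verification of rigidity: ruling out that $\mathcal{C}_{28}$ or $\mathcal{C}_{33}$ sits in the closure of some parametric family (including $\mathcal{C}_{25}^{1,\beta,\beta}$ or $\mathcal{C}_{32}^{0,\beta,0}$, not merely of the two large families). The commutativity argument disposes of the families $\mathcal{C}_{38}^{*}$ and $\mathcal{C}_{39}^{*}$ at once; for the remaining families one must identify, in the coordinates $c_{ij}^{k}$, $c_{ij}'^{k}$, a Zariski closed set stable under upper-triangular changes of basis that contains the family but excludes the candidate rigid algebra, and then invoke Lemma~\ref{main2}.
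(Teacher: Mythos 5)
Your plan is correct and follows essentially the same route as the paper: inherit the components $\overline{\mathcal{O}\bigl(\mathcal{C}_{38}^{\alpha,\beta,\gamma}\bigr)}$ and $\overline{\mathcal{O}\bigl(\mathcal{C}_{39}^{\alpha,\beta}\bigr)}$ from Theorem~\ref{geo2}, separate $\mathcal{C}_{28}$ and $\mathcal{C}_{33}$ from them via the closed, $\mathrm{GL}_2(\mathbb{C})$-stable condition that the first multiplication be commutative (Remark~\ref{redbil}/Lemma~\ref{main2}) together with orbit-dimension counts, and absorb $\mathcal{C}_{05}^{1}$, $\mathcal{C}_{25}^{1,\beta,\beta}$ into $\overline{\mathcal{O}(\mathcal{C}_{28})}$ and $\mathcal{C}_{06}^{0}$, $\mathcal{C}_{32}^{0,\beta,0}$ into $\overline{\mathcal{O}(\mathcal{C}_{33})}$ by explicit parametrized bases, which is exactly what the paper does. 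The only slip is that $\mathcal{C}_{05}^{1}$ (as a compatible algebra) has \emph{zero} first multiplication rather than sharing $\mathcal{C}_{28}$'s, but the degeneration $\mathcal{C}_{28}\xrightarrow{(e_2,\,te_1)}\mathcal{C}_{05}^{1}$ still works, and your residual worry about the small families $\mathcal{C}_{25}^{1,\beta,\beta}$, $\mathcal{C}_{32}^{0,\beta,0}$ reaching the rigid algebras is dispatched by a dimension count on the closures of these one-parameter families.
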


\begin{proof}
 Thanks to Theorem \ref{comass2}, we have the algebraic classification of
 $2$-dimensional compatible associative algebras.
After carefully checking the dimensions of orbit closures of the more important for us algebras, we have
\begin{gather*}
\dim \mathcal{O}\bigl(\mathcal{C}_{38}^{\alpha,\beta,\gamma}\bigr)=7, \quad
\dim \mathcal{O}\bigl(\mathcal{C}_{39}^{\alpha,\beta}\bigr)=6, \quad
\dim \mathcal{O}(\mathcal{C}_{28})=\dim \mathcal{O}(\mathcal{C}_{34})=4.
\end{gather*}

Thanks to Theorem \ref{geo3}, we have that
\smash{$\mathcal{C}_{38}^{\alpha,\beta,\gamma}$} and \smash{$\mathcal{C}_{39}^{\alpha,\beta}$} give irreducible components in the variety of $2$-dimensional compatible commutative associative algebras.
The rest of the necessary degenerations are given by
\begin{gather*}
 \mathcal{C}_{28} \xrightarrow{ ( e_2, te_1)} \mathcal{C}_{05}^1, \quad
 \mathcal{C}_{33} \xrightarrow{ ( e_2, te_1)} \mathcal{C}_{06}^0, \quad
 \mathcal{C}_{28} \xrightarrow{ (e_1+\beta e_2, te_2)} \mathcal{C}_{25}^{1,\beta,\beta}, \quad
 \mathcal{C}_{33} \xrightarrow{ ( e_1+\beta e_2, te_2)} \mathcal{C}_{32}^{0,\beta,0}.\tag*{\qed} \end{gather*}\renewcommand{\qed}{}
\end{proof}

\subsection{The geometric classification of compatible Novikov algebras}

The main result of the present section is the following theorem.

\begin{Theorem}\label{geo1}
The variety of complex $2$-dimensional compatible Novikov algebras has
dimension~$7$ and it has $6$ irreducible components defined by
$ \overline{\mathcal{O}( \mathcal{C}_{33})}$,
\smash{$ \overline{\mathcal{O}\big( \mathcal{C}_{09}^{\alpha,\beta}\big)}$},
\smash{$ \overline{\mathcal{O}\big( \mathcal{C}_{22}^{\alpha,\beta}\big)}$},
\smash{$ \overline{\mathcal{O}\big( \mathcal{C}_{39}^{\alpha,\beta}\big)}$},
\smash{$ \overline{\mathcal{O}\big( \mathcal{C}_{31}^{\alpha,\beta, \gamma}\big)}$} and
\smash{$ \overline{\mathcal{O}\big( \mathcal{C}_{38}^{\alpha,\beta,\gamma}\big)}$}.
In particular, there is one rigid algebra in this variety.

\end{Theorem}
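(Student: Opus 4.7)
The plan mirrors the strategy used in Theorems \ref{geo2} and \ref{geo3}. The algebraic classification of $2$-dimensional compatible Novikov algebras is supplied by Theorem \ref{comnov2}, so every such algebra is either a compatible commutative associative algebra from Theorem \ref{comasscom2} or one of the twelve additional items listed there. First I would compute the orbit-plus-parameter dimensions of the six candidate components, expecting
\[
\dim \mathcal{O}\bigl(\mathcal{C}_{31}^{\alpha,\beta,\gamma}\bigr) = \dim \mathcal{O}\bigl(\mathcal{C}_{38}^{\alpha,\beta,\gamma}\bigr) = 7, \quad \dim \mathcal{O}\bigl(\mathcal{C}_{09}^{\alpha,\beta}\bigr) = \dim \mathcal{O}\bigl(\mathcal{C}_{22}^{\alpha,\beta}\bigr) = \dim \mathcal{O}\bigl(\mathcal{C}_{39}^{\alpha,\beta}\bigr) = 6, \quad \dim \mathcal{O}(\mathcal{C}_{33}) = 4,
\]
via $\dim \mathcal{O}(\mathrm{T}) = 4 - \dim \mathfrak{Der}(\mathrm{T})$ for the generic orbit plus the number of essential parameters. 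This pins down $\mathcal{C}_{33}$ as the unique rigid algebra of the variety and yields the claimed dimension $7$.

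Next I would show that each of the six orbit closures is a maximal irreducible closed subset. For $\overline{\mathcal{O}(\mathcal{C}_{38}^{\alpha,\beta,\gamma})}$ and $\overline{\mathcal{O}(\mathcal{C}_{39}^{\alpha,\beta})}$, the non-degeneration statements transfer directly from Theorem \ref{geo2}, since any purported incoming degeneration from a non-commutative family would, via Remark \ref{redbil}, induce an incoming degeneration in the compatible commutative associative variety, which is ruled out there. For $\overline{\mathcal{O}(\mathcal{C}_{33})}$, the same tactic uses that it is already an irreducible component of the ambient variety of compatible associative algebras by Theorem \ref{geo3}. For the three remaining components $\mathcal{C}_{09}^{\alpha,\beta}$, $\mathcal{C}_{22}^{\alpha,\beta}$, and $\mathcal{C}_{31}^{\alpha,\beta,\gamma}$, I would again use Remark \ref{redbil}: forgetting $\ast$ forces each of them onto a distinct $2$-dimensional pre-Lie algebra, namely $\mathcal{C}_{01}$, $\mathcal{C}_{04}$, and $\mathcal{C}_{06}^\alpha$ respectively, and the Novikov geometric classification of \cite{BB14} ensures these sit in different irreducible components of the ambient single-multiplication variety.

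The final and most substantial step is to exhibit explicit parametrized bases degenerating the six families to every other algebra in Theorem \ref{comnov2}. Degenerations into commutative associative targets (such as $\mathcal{C}_{03}$, $\mathcal{C}_{15}^\alpha$, $\mathcal{C}_{16}^0$, $\mathcal{C}_{18}^\alpha$, $\mathcal{C}_{24}^{0,\beta,0}$, $\mathcal{C}_{25}^{0,\beta,0}$, $\mathcal{C}_{29}^\alpha$, $\mathcal{C}_{30}^{\alpha,\beta}$, $\mathcal{C}_{31}^{1,\beta,\beta}$, $\mathcal{C}_{32}^{1,\beta,\beta}$, $\mathcal{C}_{34}^{\alpha,\beta}$, $\mathcal{C}_{35}^\alpha$) are imported wholesale from Theorem \ref{geo2}. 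The remaining targets are reached through patterns such as $\mathcal{C}_{31}^{*} \to \mathcal{C}_{01}, \mathcal{C}_{06}^{\alpha \neq 1}, \mathcal{C}_{13}^{\alpha \neq 1}, \mathcal{C}_{32}^{(\alpha,\beta,\gamma) \neq (1,\beta,\beta)}$; $\mathcal{C}_{22}^{*} \to \mathcal{C}_{04}, \mathcal{C}_{20}^{\alpha,0}, \mathcal{C}_{21}^{\alpha,0}, \mathcal{C}_{23}^{\alpha,\beta}$; and $\mathcal{C}_{09}^{*} \to \mathcal{C}_{10}^{\alpha,\beta}, \mathcal{C}_{14}^\alpha$, each realized by a carefully chosen $t$-dependent basis together with a $t$-dependent choice of parameters in the source family.

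The main obstacle is dual in nature: producing correct parametrized bases for each of the many explicit degenerations and, simultaneously, constructing triangular-stable Zariski closed subsets of $\mathcal{T}_2$, defined by polynomial relations in $c_{ij}^k$ and $c_{ij}'^k$, that separate the six candidate components via Lemma \ref{main2}. In particular, separating $\overline{\mathcal{O}(\mathcal{C}_{09}^{\alpha,\beta})}$, $\overline{\mathcal{O}(\mathcal{C}_{22}^{\alpha,\beta})}$, and $\overline{\mathcal{O}(\mathcal{C}_{31}^{\alpha,\beta,\gamma})}$ from one another requires loci tuned to the invariants of their distinct pre-Lie skeletons, for which Remark \ref{redbil} combined with the known pre-Lie separation of \cite{BB09,BB14} should suffice.
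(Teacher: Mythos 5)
Your overall strategy (orbit dimensions, transfer of $\mathcal{C}_{38}^{\alpha,\beta,\gamma}$ and $\mathcal{C}_{39}^{\alpha,\beta}$ from Theorem~\ref{geo2}, explicit parametrized bases for the remaining degenerations) matches the paper's, but two of your non-degeneration arguments have genuine gaps. The more serious one concerns $\mathcal{C}_{33}$. You propose to import its component-hood from Theorem~\ref{geo3}, calling the compatible associative variety ``ambient''; it is not. Novikov does not imply associative: the families $\mathcal{C}_{09}^{\alpha,\beta}$, $\mathcal{C}_{22}^{\alpha,\beta}$ and $\mathcal{C}_{31}^{\alpha,\beta,\gamma}$ are compatible Novikov but not compatible associative (for instance, $\mathcal{C}_{06}^{\alpha}$ is associative only for $\alpha\in\{0,1\}$), so Theorem~\ref{geo3} says nothing about whether they degenerate to $\mathcal{C}_{33}$. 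The first-multiplication projection of Remark~\ref{redbil} cannot help either, since the first multiplication of $\mathcal{C}_{33}$ is $\mathcal{C}_{06}^{0}$, a member of the family underlying $\mathcal{C}_{31}^{\ast}$. The paper closes exactly this hole with a different invariant: $\mathcal{C}_{09}$, $\mathcal{C}_{22}$ and $\mathcal{C}_{31}$ each possess a one-dimensional subalgebra that is null for \emph{both} multiplications ($\langle e_2\rangle$, $\langle e_1\rangle$, $\langle e_2\rangle$ respectively), this is a closed condition preserved in orbit closures, and $\mathcal{C}_{33}$ fails it. Your proposal supplies nothing that plays this role.

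The second gap is your separation of $\mathcal{C}_{09}^{\alpha,\beta}$, $\mathcal{C}_{22}^{\alpha,\beta}$, $\mathcal{C}_{31}^{\alpha,\beta,\gamma}$ via their single-multiplication skeletons. The claim that $\mathcal{C}_{01}$ and $\mathcal{C}_{06}^{\alpha}$ lie in distinct irreducible components of the single-multiplication variety is false: taking $E_1=e_1+e_2$, $E_2=te_2$ in $\mathcal{C}_{06}^{t}$ gives $E_1E_1=E_1+E_2$, $E_2E_1=E_2$, $E_1E_2=tE_2\to 0$, $E_2E_2=0$, so the family $\mathcal{C}_{06}^{\ast}$ degenerates to $\mathcal{C}_{01}$. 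Consequently Remark~\ref{redbil} does not obstruct $\mathcal{C}_{31}^{\ast}\to\mathcal{C}_{09}^{\alpha,\beta}$, and a finer argument on the pair $(\mu,\mu')$ is needed there. Your sketch is even internally inconsistent on this point: the degeneration $\mathcal{C}_{09}^{\ast}\to\mathcal{C}_{14}^{\alpha}$ you propose has second-multiplication projection precisely of the form $\mathcal{C}_{06}^{\ast}\to\mathcal{C}_{01}$, which contradicts your ``distinct components'' assertion. Two smaller points: \cite{BB14} treats three-dimensional Novikov algebras and cannot be cited for the two-dimensional skeleton non-degenerations; and your value $\dim\mathcal{O}\bigl(\mathcal{C}_{31}^{\alpha,\beta,\gamma}\bigr)=7$ disagrees with the $6$ stated in the paper's proof of this theorem (though it matches the value used in Theorem~\ref{geo4}).
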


\begin{proof}
 Thanks to Theorem \ref{comnov2}, we have the algebraic classification of
 $2$-dimensional compatible Novikov algebras.
After carefully checking the dimensions of orbit closures of the more important for us algebras, we have
\begin{gather*}
\dim \mathcal{O}\bigl(\mathcal{C}_{38}^{\alpha,\beta,\gamma}\bigr)=7, \quad
\dim \mathcal{O}\bigl(\mathcal{C}_{39}^{\alpha,\beta}\bigr)=
\dim \mathcal{O}\bigl(\mathcal{C}_{09}^{\alpha,\beta}\bigr)=
\dim \mathcal{O}\bigl(\mathcal{C}_{22}^{\alpha,\beta}\bigr)=
\dim \mathcal{O}\bigl(\mathcal{C}_{31}^{\alpha,\beta,\gamma}\bigr)=6, \\
\dim \mathcal{O}(\mathcal{C}_{33})=4.
\end{gather*}

Thanks to Theorem \ref{geo1}, we have that
\smash{$\mathcal{C}_{38}^{\alpha,\beta,\gamma}$} and \smash{$\mathcal{C}_{39}^{\alpha,\beta}$} give irreducible components in the variety of $2$-dimensional compatible commutative associative algebras.
Algebras \smash{$\mathcal{C}_{09}^{\alpha,\beta}$},
\smash{$\mathcal{C}_{22}^{\alpha,\beta}$}, and~\smash{$\mathcal{C}_{31}^{\alpha,\beta, \gamma}$} have a one-dimensional subalgebra with zero multiplication (concerning both multiplications), but $\mathcal{C}_{33}$ has not.
Hence, $\mathcal{C}_{33}$ is not in the orbit closure of \smash{$\mathcal{C}_{09}^{\alpha,\beta}$},
\smash{$\mathcal{C}_{22}^{\alpha,\beta}$}, and $\mathcal{C}_{31}.$
The rest of the necessary degenerations are given by
\begin{gather*}
 \mathcal{C}_{31}^{t, \frac{t+2}{2t}, -\frac{1}{2}} \xrightarrow{ ( te_1+\frac{2 t}{3}e_2, \frac{t^2}{2}e_1+t^2e_2)} \mathcal{C}_{01} , \quad
 \mathcal{C}_{31}^{0, 0, t^{-1}} \xrightarrow{ ( t^2e_1+e_2, te_1-t^2e_2)} \mathcal{C}_{04} , \\
 \mathcal{C}_{31}^{0, t^{-1}, \alpha t^{-1}} \xrightarrow{ ( te_1-\alpha^{-1}t^2e_2, te_2)} \mathcal{C}_{06}^{\alpha\neq0} , \quad
 \mathcal{C}_{09}^{\frac{(2\alpha+t)\beta}{2\beta+t},-\frac{\beta t}{2\beta+t}} \xrightarrow{ ( \frac{2\beta +t}{2\beta}e_1-\frac{(2\beta +t)^2}{2\beta t}e_2, \frac{(2\beta+t)t}{4\beta^2}te_1)} \mathcal{C}_{10}^{\alpha,\beta\neq0} ,\\
 \mathcal{C}_{31}^{1, \alpha t^{-1}, t^{-1}} \xrightarrow{ ( te_1-t^2e_2, -t^3e_2)} \mathcal{C}_{13}^{\alpha} , \quad
 \mathcal{C}_{31}^{1, t^{-1}, 0} \xrightarrow{ (te_1+\alpha^{-1} t e_2, \alpha^{-1}t^2 e_2)} \mathcal{C}_{14}^{\alpha\neq0} , \\
 \mathcal{C}_{31}^{t^{-1}-t, 0, -1+\alpha t^{-1}} \xrightarrow{ (t^2e_1+\frac{t^2}{1-\alpha t} e_2, t e_1+\frac{t^3}{1-\alpha t} e_2)} \mathcal{C}_{20}^{\alpha,0} ,\quad
 \mathcal{C}_{31}^{t^{-1}-t, 0, -\alpha t^{-1}} \xrightarrow{ (t^2e_1-\alpha^{-1} t e_2, t e_1- \alpha^{-1} t e_2)} \mathcal{C}_{21}^{\alpha,0} , \\
 \mathcal{C}_{31}^{t^{-1}-t, \alpha t^{-1}, \beta t^{-1}} \xrightarrow{ (t^2e_1-\beta^{-1} t e_2, t e_1- \beta^{-1} t^2 e_2)} \mathcal{C}_{23}^{\alpha,\beta} , \quad
 \mathcal{C}_{31}^{\alpha, \beta, \gamma} \xrightarrow{ ( e_1, t^{-1}e_2)} \mathcal{C}_{32}^{\alpha, \beta, \gamma}. \tag*{\qed}
\end{gather*}\renewcommand{\qed}{}
\end{proof}

\subsection{The geometric classification of compatible pre-Lie algebras}

The main result of the present section is the following theorem.

\begin{Theorem}\label{geo4}
The variety of complex $2$-dimensional compatible pre-Lie algebras has
dimension $7$ and it has $14$ irreducible components defined by
\begin{gather*}
 \overline{\mathcal{O}( \mathcal{C}_{28})} , \quad
 \overline{\mathcal{O}( \mathcal{C}_{33})} , \quad
 \overline{ \mathcal{O}(\mathcal{C}_{37}^\alpha)} ,
\quad \overline{ \mathcal{O}\bigl(\mathcal{C}_{09}^{\alpha,\beta}\bigr)} ,
\quad \overline{ \mathcal{O}\bigl(\mathcal{C}_{11}^{\alpha,\beta}\bigr)} ,
\quad \overline{ \mathcal{O}\bigl(\mathcal{C}_{22}^{\alpha,\beta}\bigr)} ,
\quad\overline{ \mathcal{O}\bigl(\mathcal{C}_{27}^{\alpha,\beta}\bigr)} ,
\\ \overline{ \mathcal{O}\bigl(\mathcal{C}_{36}^{\alpha,\beta}\bigr)} ,
\quad \overline{ \mathcal{O}\bigl(\mathcal{C}_{39}^{\alpha,\beta}\bigr)} ,
\quad \overline{ \mathcal{O}\bigl(\mathcal{C}_{41}^{\alpha,\beta}\bigr)} ,
\quad \overline{ \mathcal{O}\bigl(\mathcal{C}_{24}^{\alpha,\beta,\gamma}\bigr)} ,
\quad \overline{ \mathcal{O}\bigl(\mathcal{C}_{31}^{\alpha,\beta,\gamma}\bigr)} ,
\\ \overline{\mathcal{O}\bigl(\mathcal{C}_{38}^{\alpha,\beta,\gamma}\bigr)} \quad \text{and}
\quad \overline{ \mathcal{O}\bigl(\mathcal{C}_{40}^{\alpha,\beta,\gamma}\bigr)}.
 \end{gather*}
 In particular, there are two rigid algebras in this variety.

\end{Theorem}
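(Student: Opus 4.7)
The plan is to follow the same template used in the proofs of Theorems \ref{geo2}, \ref{geo3}, and \ref{geo1}, but now for the full variety of compatible pre-Lie algebras. First I would take the algebraic classification from Theorem \ref{compre2} and, using the standard formula $\dim \mathcal{O}((\mu,\mu')) = n^2 - \dim \mathfrak{Der}({\rm T})$, compute the orbit dimensions of each listed algebra and each parametric family. I expect the fourteen candidate components to be exactly those algebras/families whose orbit closures are maximal in the sense that they are not contained in the closure of any other orbit family; in particular the two rigid algebras $\mathcal{C}_{28}$ and $\mathcal{C}_{33}$ will have $\dim \mathcal{O} = 4$ and the three-parameter families $\mathcal{C}_{38}^{\alpha,\beta,\gamma}$, $\mathcal{C}_{24}^{\alpha,\beta,\gamma}$, $\mathcal{C}_{31}^{\alpha,\beta,\gamma}$, $\mathcal{C}_{40}^{\alpha,\beta,\gamma}$ will saturate the dimension~$7$, thus establishing $\dim \mathcal{T}_2 = 7$.

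Next, I would exploit the subvariety structure. The variety of compatible commutative associative, compatible associative, and compatible Novikov algebras sit as closed subvarieties, and their irreducible components have already been pinned down in Theorems \ref{geo2}, \ref{geo3}, \ref{geo1}. By Remark \ref{remrem}, any degeneration within the pre-Lie variety restricts to a degeneration of the underlying $\cdot$-algebra and of the $\ast$-algebra separately; combined with the known degeneration graph of $2$-dimensional pre-Lie algebras \cite{BB09}, this immediately rules out most conceivable degenerations between the candidate components. What remains is to certify each proposed component as irreducible by exhibiting, for every other algebra or parametric family in Theorem \ref{compre2}, an explicit parametrized basis $E_1(t),E_2(t)$ (together, where needed, with a parametric index $f(t)$) witnessing the degeneration from one of the fourteen candidates. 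This is analogous to the sequences of degenerations already worked out in Theorems \ref{geo2} and \ref{geo1}, and in fact many of them can be lifted verbatim from those proofs since the pre-Lie operation extends the commutative associative and Novikov operations.

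For the non-degenerations between candidate components I would use Lemma \ref{main2}: for each pair $(\mathcal{A}, \mathcal{B})$ with $\mathcal{A}$ a candidate component and $\mathcal{B}$ another candidate, I would exhibit a Zariski closed subset $C$ of $\mathcal{T}_2$, stable under the upper-triangular subgroup, such that every representative of (a generic member of) $\mathcal{A}$ lies in $C$ but no representative of $\mathcal{B}$ does. Natural invariants to cut out $C$ with are: existence of a one-dimensional subalgebra that is a zero-ideal for both $\cdot$ and $\ast$ (this already separates $\mathcal{C}_{33}$ from $\mathcal{C}_{09}^{\alpha,\beta}$, $\mathcal{C}_{22}^{\alpha,\beta}$, $\mathcal{C}_{31}^{\alpha,\beta,\gamma}$, as used in Theorem \ref{geo1}), vanishing patterns of specific structure constants, and polynomial relations among structure constants of the shape used to separate $\mathcal{C}_{38}^*$ from $\mathcal{C}_{39}^{\alpha,\beta}$ in Theorem \ref{geo2}.

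The main obstacle I anticipate is the sheer combinatorics of the non-degeneration step: there are fourteen candidate components, so one must verify on the order of $14 \cdot 13$ non-degenerations (reduced somewhat by symmetry and by the subvariety argument). For each pair, one has to invent or recycle a stable Zariski-closed witness, and the one-parameter family $\mathcal{C}_{37}^\alpha$ together with the three large three-parameter families $\mathcal{C}_{24}^{\alpha,\beta,\gamma}$, $\mathcal{C}_{31}^{\alpha,\beta,\gamma}$, $\mathcal{C}_{40}^{\alpha,\beta,\gamma}$ will be the trickiest, since their defining multiplication tables share structure constants with several other families and so crude vanishing conditions are not enough. By contrast, the degeneration step, although lengthy, should be essentially mechanical once the candidates and their dimensions are fixed, following the pattern of the degeneration tables already displayed in the proofs of Theorems \ref{geo2}, \ref{geo3}, and \ref{geo1}.
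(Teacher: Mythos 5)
Your plan is correct and follows essentially the same route as the paper: compute orbit(-closure) dimensions via $\dim\mathfrak{Der}$, obtain most non-degenerations from Remark \ref{remrem} together with the single-multiplication results and the already-established components of the compatible Novikov subvariety (Theorem \ref{geo1}), handle the remaining separations with triangular-stable Zariski-closed sets as in Lemma \ref{main2}, and certify the list by explicit parametrized bases. The only quantitative remark is that far fewer than $14\cdot 13$ pairs actually need a bespoke closed-set witness — the paper gets away with four such sets, the rest following from Remark \ref{remrem} and dimension counts.
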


\begin{proof}
 Thanks to Theorem \ref{compre2}, we have the algebraic classification of
 $2$-dimensional compatible pre-Lie algebras.
After carefully checking the dimensions of orbit closures of the more important for us algebras, we have
\begin{gather*}
 \dim \mathcal{O}\bigl(\mathcal{C}_{24}^{\alpha,\beta,\gamma}\bigr)=\dim \mathcal{O}\bigl(\mathcal{C}_{31}^{\alpha,\beta,\gamma}\bigr)=\dim \mathcal{O}\bigl(\mathcal{C}_{38}^{\alpha,\beta,\gamma}\bigr)=\dim \mathcal{O}\bigl(\mathcal{C}_{40}^{\alpha,\beta,\gamma}\bigr) = 7 , \\
\dim \mathcal{O}\bigl(\mathcal{C}_{09}^{\alpha,\beta}\bigr)=
\dim \mathcal{O}\bigl(\mathcal{C}_{11}^{\alpha,\beta}\bigr)=
\dim \mathcal{O}\bigl(\mathcal{C}_{22}^{\alpha,\beta}\bigr)=
\dim \mathcal{O}\bigl(\mathcal{C}_{27}^{\alpha,\beta}\bigr) =
 \dim \mathcal{O}\bigl(\mathcal{C}_{36}^{\alpha,\beta}\bigr)=
\dim \mathcal{O}\bigl(\mathcal{C}_{39}^{\alpha,\beta}\bigr) \\
\phantom{\dim \mathcal{O}\bigl(\mathcal{C}_{09}^{\alpha,\beta}\bigr)}{}=\dim \mathcal{O}\bigl(\mathcal{C}_{41}^{\alpha,\beta}\bigr) = 6 , \\
 \dim \mathcal{O}\bigl(\mathcal{C}_{37}^\alpha\bigr) = 5, \quad
 \dim \mathcal{O}(\mathcal{C}_{28})=\dim \mathcal{O}(\mathcal{C}_{33}) = 4.
\end{gather*}

All necessary reasons for non-degenerations are listed as follows:
\begin{center}\renewcommand{\arraystretch}{1.3}
\begin{tabular}{l|l}
\hline
 \multicolumn{2}{c}{Non-degenerations reasons} \\
\hline
$\mathcal{C}_{24}^{*}
\not \rightarrow
\mathcal{C}_{22}^{\alpha,\beta}, \,
\mathcal{C}_{26}^{\alpha}, \,
\mathcal{C}_{27}^{\alpha,\beta}, \,
\mathcal{C}_{28}$
&
$\mathcal R=\bigl\{c_{21}'^1=c_{21}'^2=c_{22}^1=c_{22}^2=c_{22}'^1=c_{22}'^2=0\bigr\}$\\
\hline
$
\mathcal{C}_{31}^{*}
\not \rightarrow
\mathcal{C}_{33}, \,
\mathcal{C}_{36}^{\alpha,\beta}, \,
\mathcal{C}_{37}^{\alpha}$
&
$\mathcal R=\bigl\{
c_{22}'^1=c_{23}'^2=c_{22}^1=c_{22}^2=0
\bigr\}
$\\
\hline
$\mathcal{C}_{36}^{*}
\not \rightarrow \mathcal{C}_{37}^{\alpha}$
&
$\mathcal R=\bigl\{
c_{22}^1=c_{12}^1=c_{12}'^1=0\bigr\}
$\\
\hline
$\mathcal{C}_{40}^{*}\not \rightarrow\mathcal{C}_{41}^{\alpha,\beta}$
&
$\mathcal R=\bigl\{
c_{22}^1=c_{12}^1=c_{12}'^1=0, \ 2 c_{11}'^1=c_{12}'^1
\bigr\}$\\
\hline
\end{tabular}
\end{center}

The rest of the non-degenerations between indicated algebras follows from Remark \ref{remrem}.

Thanks to Theorem \ref{geo1}, we have that
$ \mathcal{C}_{33}$,
$ \mathcal{C}_{09}^{\alpha,\beta}$,
$ \mathcal{C}_{22}^{\alpha,\beta}$,
$ \mathcal{C}_{39}^{\alpha,\beta}$,
$ \mathcal{C}_{31}^{\alpha,\beta, \gamma}$, and
$ \mathcal{C}_{38}^{\alpha,\beta,\gamma}$
 give irreducible components in the variety of $2$-dimensional compatible Novikov algebras.
 Hence, each $2$-dimensional compatible Novikov algebras is on the orbit closure of these algebras. The rest of the necessary degenerations are given by
\begin{gather*}
 \mathcal{C}_{24}^{ 1,1+ t^{-1}, -1+ t^{-1}} \xrightarrow{ (te_1 +(-1+t) e_2, t^{2}e_1+t e_2 )} \mathcal{C}_{02} ,\quad
 \mathcal{C}_{24}^{ 1, t^{-1}, \alpha t^{-1}} \xrightarrow{ (te_1 +\frac{t^2}{1-\alpha} e_2, t^{2}e_1+ e_2 )} \mathcal{C}_{05}^{\alpha\neq 1} ,\\
 \mathcal{C}_{40}^{t^{-1},0, \frac{1}{2t}} \xrightarrow{ (te_1, te_2)} \mathcal{C}_{08} ,\quad
 \mathcal{C}_{11}^{\alpha,\alpha+t, } \xrightarrow{ (e_1+\beta t^{-1} e_2, e_2)} \mathcal{C}_{12}^{\alpha,\beta} ,\\
 \mathcal{C}_{24}^{ -1+\alpha, t^{-1}, \alpha t^{-1}} \xrightarrow{ (te_1 -\frac{t^2}{\alpha-1} e_2, t^{2}e_1-t^3 e_2 )} \mathcal{C}_{16}^{\alpha\neq 1} ,\\
 \mathcal{C}_{24}^{ -2, \alpha+t^{-1}, -\alpha +t^{-1}} \xrightarrow{ (te_1 -\alpha^{-1}t e_2, t^{2}e_1+2\alpha^{-1}t^2 e_2 )} \mathcal{C}_{17}^{\alpha\neq 0} ,\\
 \mathcal{C}_{40}^{ -\alpha -2t^{-2}, -6\alpha -6t^{-2}, -\frac{4+\alpha t^2}{6t^2}} \xrightarrow{ (te_1-3 te_2, \frac{t^2}{2}e_1-3 t^2e_2 )} \mathcal{C}_{19}^{\alpha} ,\\
 \mathcal{C}_{24}^{ -1-t^{-2}, -(\beta+t)t^{-2}, (t-\alpha) t^{-2}} \xrightarrow{ (-t^3e_1 +\frac{t^4+t^6}{1+\alpha t -\beta t} e_2, -t^{2}e_1+ \frac{t^5}{\beta t -\alpha t -1}e_2 )} \mathcal{C}_{20}^{\alpha,\beta} ,\\
 \mathcal{C}_{24}^{ -1-t^{-2}, -\beta t^{-2}, -\alpha t^{-2}} \xrightarrow{ (-t^3e_1 +\frac{t^3+t^5}{\alpha -\beta } e_2, -t^{2}e_1- \frac{t^4}{ \alpha -\beta}e_2 )} \mathcal{C}_{21}^{\alpha,\beta} ,\quad
 \mathcal{C}_{27}^{\alpha -5t^2,t^{-2}} \xrightarrow{ (e_1 + 2t^2 e_2, t e_2 )} \mathcal{C}_{26}^{\alpha} ,\\
 \mathcal{C}_{24}^{ \alpha, \beta, \gamma} \xrightarrow{ (e_1, t^{-1}e_2 )} \mathcal{C}_{25}^{\alpha, \beta, \gamma}. \tag*{\qed}
\end{gather*} \renewcommand{\qed}{}
\end{proof}

\subsection*{Acknowledgements}

The authors thank Amir Fern\'andez Ouaridi for sharing some useful software programs.
The work is supported by
FCT UIDB/00212/2020, UIDP/00212/2020 and 2022.14950.CBM (Programa PESSOA).

\pdfbookmark[1]{References}{ref}
\LastPageEnding

\end{document}